\documentclass[11pt,a4paper]{article}

\usepackage[T1]{fontenc}
\usepackage[utf8]{inputenc}
\usepackage[english]{babel}
\usepackage{lmodern}
\usepackage{microtype}
\usepackage{amsmath,amssymb,amsthm,mathtools,mathrsfs}
\usepackage{booktabs}
\usepackage{enumitem}
\usepackage{xcolor}
\usepackage[a4paper,hmargin=2.5cm,vmargin=2.3cm]{geometry}
\usepackage[colorlinks=true,linkcolor=blue!55!black,
  citecolor=blue!55!black,urlcolor=blue!55!black]{hyperref}
\hypersetup{
  pdftitle={The maximum number of Pareto eigenvalues of a real matrix of order four is 23},
  pdfauthor={Samir Adly},
  pdfsubject={Pareto eigenvalues and Pareto capacity},
  pdfkeywords={Pareto eigenvalue, eigenvalue complementarity problem,
    Pareto capacity, fixed point index}
}
\newtheorem{theorem}{Theorem}[section]
\newtheorem{proposition}[theorem]{Proposition}
\newtheorem{lemma}[theorem]{Lemma}
\newtheorem{corollary}[theorem]{Corollary}
\theoremstyle{definition}
\newtheorem{definition}[theorem]{Definition}
\newtheorem{remark}[theorem]{Remark}

\newcommand{\R}{\mathbb R}
\newcommand{\Ind}{\mathcal I}
\newcommand{\Supp}{\mathscr S}
\newcommand{\Id}{\operatorname{Id}}
\newcommand{\one}{\mathbf 1}
\newcommand{\indic}{\mathbf I}
\newcommand{\supp}{\operatorname{supp}}
\newcommand{\Fix}{\operatorname{Fix}}
\newcommand{\ind}{\operatorname{ind}}
\DeclareMathOperator{\sgn}{sgn}

\makeatletter
\renewenvironment{abstract}
  {%
    \par\smallskip
    \small
    \noindent\textbf{Abstract. }\ignorespaces
  }
  {%
    \par\medskip
  }
\makeatother
\title{The maximum number of Pareto eigenvalues of a real matrix of order four is 23}
\author{Samir Adly\thanks{XLIM, Universit\'e de Limoges,
123 avenue Albert Thomas, 87060 Limoges Cedex, France.
Email: \href{mailto:samir.adly@unilim.fr}{samir.adly@unilim.fr}.}}
\date{}

\begin{document}

\maketitle

\begin{abstract}
For a given real matrix $A\in\R^{n\times n}$, a Pareto eigenvalue of $A$ is a real number $\lambda\in\R$ for which there exists a nonzero vector $x\in\R^n\setminus\{0\}$ such that
$$
0\leq x\perp Ax-\lambda x\geq0.
$$
We prove that every matrix $A\in\R^{4\times4}$ has at most $23$ distinct Pareto eigenvalues. We first prove the result for matrices of order $4$ satisfying two conditions: every real eigenvalue of a principal submatrix is simple, and two different principal submatrices have no real eigenvalue in common. For these matrices, a fixed point argument shows that the number of Pareto eigenvalues is odd. Previous known results show that the Pareto capacity of order $4$ is between $23$ and $26$. Thus only $25$ remains to exclude. We exclude this case by using the support profiles in order $3$ and identities involving eigenvectors of principal submatrices. A perturbation and fixed point index argument then extends the bound to all real matrices of order $4$. An exact symbolic computation certifies that an explicit matrix of order $4$ has exactly $23$ distinct regular Pareto eigenvalues.
\end{abstract}

\noindent\textbf{Keywords.}
Pareto eigenvalue; eigenvalue complementarity problem; Pareto capacity;
support profile; fixed point index.

\medskip
\noindent\textbf{2020 Mathematics Subject Classification.}
15A18, 15A39, 47H10, 90C33.

\section{Introduction}
\label{sec:introduction}

The Pareto eigenvalue problem is the eigenvalue complementarity problem associated with the positive orthant $K=\R_+^n$. Let $A\in\R^{n\times n}$. A real number $\lambda$ is called a Pareto eigenvalue of $A$ if there exists a vector $x\in\R^n\setminus\{0\}$ such that
$$
x\geq0,
\qquad
Ax-\lambda x\geq0,
\qquad
\langle x,Ax-\lambda x\rangle=0.
$$
The set of all Pareto eigenvalues of $A$ is denoted by $\Pi(A)$. In this paper, we study the maximum possible number of distinct Pareto eigenvalues of a real matrix. The Pareto capacity of order $n\geq1$ is defined by
$$
c_n:=\max_{A\in\R^{n\times n}}|\Pi(A)|.
$$
A matrix $A$ is called full-capacity if $|\Pi(A)|=c_n$.

Seeger's support criterion \cite[Theorem~4.1]{Seeger1999} gives a finite description of the Pareto spectrum. Let $J$ be a nonempty subset of $\{1,\ldots,n\}$. We say that $J$ produces $\lambda$ if there exists a vector $u\in\R^{|J|}$ such that
\begin{equation}
\label{ext-equation}
A_Ju=\lambda u,
\qquad
u>0,
\qquad
A_{J^{\mathrm c},J}u\geq0.
\end{equation}
This production is called regular if $\lambda$ is an algebraically simple eigenvalue of $A_J$ and all the exterior inequalities in \eqref{ext-equation} are strict. The same Pareto eigenvalue may be produced by different supports. Therefore, counting the eigenvalues of the principal submatrices is not sufficient to determine $|\Pi(A)|$.

Before this work, the exact capacities were known only up to order three:
$$
c_1=1,
\qquad
c_2=3,
\qquad
c_3=9.
$$
The equality $c_3=9$ was proved by Baillon and Seeger \cite{BaillonSeeger2021}. The support profiles of the full-capacity matrices of order three were classified in \cite{Adly2026}. In order four, the known bounds were
$$
23\leq c_4\leq26.
$$
The lower bound is given by an explicit matrix communicated by Victor Qi and reported by Adly and Seeger
\cite[Table~2]{AdlySeeger2011}. The upper bound follows from the paragraph after Corollary~1 in Baillon and Seeger \cite[p.~8]{BaillonSeeger2021}, where that corollary is combined with Seeger's support count \cite[Lemma~5.1]{Seeger1999}.

In this paper, we close the gap between these two bounds. Our main result is stated below.

\begin{theorem}
\label{thm:main}
Every real matrix of order $4$ has at most $23$ distinct Pareto eigenvalues, and this bound is attained. Equivalently,
$$
c_4=23.
$$
\end{theorem}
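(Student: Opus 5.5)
The plan follows the route announced in the abstract. The lower bound $c_4\geq23$ comes from the explicit matrix of \cite[Table~2]{AdlySeeger2011}. For that matrix I will certify, by exact symbolic computation, that there are $23$ supports $J$ producing $23$ pairwise distinct eigenvalues, each production regular in the sense of \eqref{ext-equation}. The certification has three parts. First, isolate the relevant real roots of the characteristic polynomials of the principal submatrices $A_J$ by rational intervals. Second, check that the eigenvectors are positive and that the exterior inequalities are strict. Third, check pairwise distinctness via resultants or disjoint isolating intervals. For the upper bound, $c_4\leq 26$ is already known, so it suffices to show $|\Pi(A)|\notin\{24,25,26\}$.

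\textbf{Step 1 (generic matrices).} Call $A$ generic if every real eigenvalue of every principal submatrix is simple and distinct principal submatrices share no real eigenvalue. For such $A$, I will show $|\Pi(A)|$ is odd. Normalize to the simplex $\Delta=\{x\geq0,\ \one^{\top}x=1\}$. Pareto eigenvectors are exactly the fixed points of the continuous map
$$
F(x)=\frac{\bigl(x-t(Ax-\langle x,Ax\rangle x)\bigr)_+}{\one^{\top}\bigl(x-t(Ax-\langle x,Ax\rangle x)\bigr)_+}
$$
for small $t>0$, or of an equivalent projection-type map, since $\lambda=\langle x,Ax\rangle$ on $\Delta$. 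By the Lefschetz/Brouwer index theorem the indices sum to $\chi(\Delta)=1$. Under genericity each fixed point is isolated with index $\pm1$: the local computation reduces to the sign of $\det(A_J-\lambda I)$ restricted to the tangent directions together with strict complementarity. Moreover, distinct fixed points give distinct eigenvalues. Hence the number of fixed points, which equals $|\Pi(A)|$, is odd. This excludes $24$ and $26$.

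\textbf{Step 2 (excluding 25 for generic $A$).} Seeger's count gives at most $26$ productive supports. Reaching $25$ would force almost every support to be productive: the singleton supports, all pairs, nearly all triples contributing essentially their maximal numbers, and the full support contributing the remainder. I will then restrict to each of the four principal $3\times3$ submatrices. Each such restriction has a Pareto spectrum of at most $9$ eigenvalues, and its support profile must then be one of the full-capacity or near-full profiles classified in \cite{Adly2026}. Combining the compatibility conditions between these four profiles with sign identities for eigenvectors of $2\times2$ and $3\times3$ principal submatrices should yield a contradiction. The identities I have in mind are adjugate/Cramer relations linking the sign of exterior entries $A_{J^{\mathrm c},J}u$ to signs of minors $\det(A_{J\cup\{k\}}-\lambda I)$. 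The mechanism I expect is that a triple producing its maximal count forces a sign pattern on some off-diagonal entries that prevents a neighbouring pair or singleton from being productive. \emph{This step is the main obstacle.} My first attempt will be a case split on how the deficit of $1$ (from $26$ to $25$) is distributed among supports of each size, using the index parity on each $3\times3$ face to prune cases.

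\textbf{Step 3 (general $A$).} Suppose some $A$ had $m\geq24$ Pareto eigenvalues. Choose one eigenvector for each of $24$ distinct eigenvalues. Under a small generic perturbation $A_\varepsilon$, each corresponding fixed point (or isolated cluster of fixed points) of the map above with nonzero total index persists, with at least one nearby fixed point. Non-isolated components can arise only in degenerate cases; these I will handle by choosing an isolated point, or a component, with nonzero index via the local index additivity property. Nearby eigenvalues stay distinct. The case of components with zero index needs care: there I will use instead that a regular production persists under perturbation. I will also show that near-degenerate situations can be perturbed into regular productions, for instance by perturbing $A$ in a direction that makes the exterior inequalities strict. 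This gives a generic $A_\varepsilon$ with at least $24$ Pareto eigenvalues, contradicting Step 2 together with $c_4\leq26$ and the parity result.
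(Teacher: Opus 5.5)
Your architecture is the paper's, and Step~1 is essentially its parity argument. Your map does have the normalized Pareto eigenvectors as fixed points, but the justification is wrong: the Pareto eigenvalue is not $\langle x,Ax\rangle$ on $\Delta$. The term $\langle x,Ax\rangle x$ is simply harmless, being zero off the support and proportional to $x$ on it; dropping it gives the paper's map with $\gamma=1/t$.

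The gaps are in Steps~2 and~3, which carry essentially all the difficulty. Step~2 is only a plan, and its heuristics are off. First, $25$ values do not force the singletons to produce: Perron--Frobenius excludes $S_1=4$, counting excludes $S_1=3$, and $S_1=0$ is a case to be treated. Second, $26$ is not a sum of per-support maxima (those sum to $4+12+12+4=32$), so ``distributing a deficit of $1$'' is not a meaningful case split. More seriously, you give no tool that links different faces. Take the aggregate profile $(S_1,S_2,S_3,q)=(0,9,12,4)$ with rich-pair graph the path $1$-$2$-$3$-$4$. Every face has $N_T\leq8$, so the order-three classification of \cite{Adly2026} says nothing, and pair orientation and the median rule are all satisfied by $d_1<d_2<d_3<d_4$. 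The paper excludes this profile, and several other cases, with a global invariant. The determinants $D_r$ of the matrices with rows $e_r^\top,e_r^\top A,e_r^\top A^2,e_r^\top A^3$ share one sign when the full support produces four values. Lemmas~\ref{lem:determinant-identities} and~\ref{lem:D-gluing} express $\sgn D_r$ through the diagonal order of any rich pair inside a saturated triple $\Ind_4\setminus\{r\}$. Hence, when $q=4$, a rich pair lies in at most one saturated triple. Together with the triangle-free rich-pair graph, $S_2\leq9$, $q\geq3$, the median rule, trace inequalities and Perron--Frobenius on $A_{134}$, this cuts the problem down to ten explicit profiles, each then refuted. Your adjugate identities and face-wise parity are not shown to supply anything of this strength.

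Step~3 has a genuine gap exactly where you say care is needed. A cluster of fixed points of total index $0$ whose value admits no regular production may vanish under a generic perturbation. Your remedy does not work as stated: making exterior inequalities strict does not make a multiple principal eigenvalue simple, and regularizing one value at a time can destroy the other irregular productions, which are unstable. The paper keeps them alive by perturbing with their producing vectors held fixed. It proves this possible only for up to three vectors (Lemma~\ref{lem:simultaneous-regularization}, which needs an analysis of linearly dependent triples of nonnegative eigenvectors). Making three enough requires two ideas absent from your plan.

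The first is a bound. For $p=|\Pi(A)|\geq24$ one has $S_1\leq2$, again using the $D_r$ signs. An injective assignment of the nonsingleton values and of all irregularities into the $6\cdot2+4\cdot3+4=28$ root occurrences of the principal characteristic polynomials of size at least two then leaves at most $28-p+S_1\leq6$ values without a regular production.

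The second is a doubling. If $\ind(F_B,U_\lambda)=0$ and $V_\lambda\subset U_\lambda$ isolates a regular fixed point of index $\varepsilon_\lambda=\pm1$, then the annulus $U_\lambda\setminus\overline{V_\lambda}$ has index $-\varepsilon_\lambda$. So after the final perturbation to a principally simple matrix $C$, such a $U_\lambda$ contains two fixed points, i.e.\ two distinct Pareto values. Let $h$ be the number of zero-index values with no regular production and $g$ the number of zero-index values that have one. Regularizing $\lceil h/2\rceil\leq3$ of the former and doubling the latter as well gives $|\Fix F_C|\geq p+g-h+2\lceil h/2\rceil\geq p$.

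Without the bound and the doubling, Step~3 does not deliver a generic matrix with at least $24$ Pareto eigenvalues.
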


Baillon and Seeger introduced the regular Pareto capacity $c_n^{\mathrm{reg}}$ by maximizing the number of distinct Pareto eigenvalues that admit a regular production \cite{BaillonSeeger2020}. Kielstra proved in his doctoral thesis that
$$
c_4=c_4^{\mathrm{reg}}
$$
\cite[Corollary~6.1.7]{Kielstra2023}. His proof uses an exhaustive search over sign configurations, followed by perturbation arguments. We give an independent proof of the reduction needed here. We first prove that $c_4^{\mathrm{reg}}=23$ and then extend this result to all real matrices of order $4$.

The proof of the upper bound is divided into three steps. First, assume that a matrix of order $4$ has more than $23$ distinct Pareto eigenvalues admitting regular productions. A sufficiently small perturbation continues these productions and can be chosen principally simple. The positive-part equation of Adly and Rammal \cite{AdlyRammal2013} gives a fixed point formulation on the standard simplex. The Lefschetz-Hopf formula then shows that the number of Pareto eigenvalues of the perturbed matrix is odd. This matrix has at least $24$ Pareto eigenvalues, while the general upper bound is $26$. It must therefore have exactly $25$ Pareto eigenvalues.

Second, we use the order-three support profiles and several determinant identities. These restrictions exclude the ten candidate detailed profiles with $25$ values, up to a simultaneous permutation of the indices. This proves
$$
c_4^{\mathrm{reg}}=23.
$$
Third, let $A$ be a real matrix of order four with at least $24$ Pareto eigenvalues. A bound on the number of principal root occurrences, together with simultaneous regularization and continuation of the fixed point index, produces a nearby principally simple matrix having at least as many Pareto eigenvalues. This contradicts the regular bound and completes the proof of Theorem~\ref{thm:main}.

The paper is organized as follows. Section~\ref{sec:supports} introduces support production, regularity, and principally simple matrices. It also presents the matrix with $23$ regular Pareto eigenvalues. Section~\ref{sec:parity} gives the fixed point formulation and proves the parity result. Section~\ref{sec:local-identities} establishes the restrictions used in order four. Section~\ref{sec:regular-capacity} determines the regular capacity. Section~\ref{sec:nonregular} extends the regular bound to all real matrices of order $4$. Section~\ref{sec:proof-main} completes the proof of Theorem~\ref{thm:main}. The proof of simultaneous regularization is given in Appendix~\ref{app:simultaneous-regularization}.

\section{Supports, regularity, and principal simplicity}
\label{sec:supports}

For $n\geq1$, set
$$
\Ind_n:=\{1,\ldots,n\},
\qquad
\Supp_n:=\{J\subseteq\Ind_n:J\neq\varnothing\}.
$$
Vector inequalities are understood componentwise. For
$J\in\Supp_n$, set $J^{\mathrm c}:=\Ind_n\setminus J$. We write
$\R^J$ for the coordinate space indexed by $J$, $A_J$ for the
principal submatrix indexed by $J$, and $A_{J^{\mathrm c},J}$ for the
block with rows in $J^{\mathrm c}$ and columns in $J$. The symbol
$\one_J$ denotes the all-ones vector in $\R^J$, while an unsubscripted
$\one$ has the dimension required by the context. For a statement
$\mathcal E$, $\indic_{\{\mathcal E\}}$ denotes its indicator. The
ordinary spectrum of a square matrix $M$ is denoted by
$\operatorname{spec}(M)$.

\subsection{Support productions and assigned profiles}

The first definition records the support of a Pareto eigenvector.

\begin{definition}
A support $J\in\Supp_n$ produces $\lambda\in\R$ if there is
a vector $u\in\R^{|J|}$ such that
$$
A_Ju=\lambda u,
\qquad
u>0,
\qquad
A_{J^{\mathrm c},J}u\geq0.
$$
The vector $A_{J^{\mathrm c},J}u$ is the exterior slack.
The set of values produced by $J$ is denoted by $\Pi_J(A)$, and
$$
p_J(A):=|\Pi_J(A)|.
$$
\end{definition}

The complementarity conditions are exactly equivalent to this
description.

\begin{proposition}
\label{prop:support-criterion}
For every $A\in\R^{n\times n}$,
$$
\Pi(A)
=
\bigcup_{J\in\Supp_n}\Pi_J(A).
$$
\end{proposition}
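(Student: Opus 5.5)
The plan is to prove the two inclusions separately, each by passing between a vector $x$ satisfying the complementarity system and its restriction to the support $J:=\supp(x)=\{i\in\Ind_n: x_i>0\}$. The key observation is that for $x\geq0$ and $y:=Ax-\lambda x\geq0$, the condition $\langle x,y\rangle=0$ is equivalent to $x_iy_i=0$ for every $i$. This holds because the sum $\sum_i x_iy_i$ has nonnegative terms.

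For the inclusion $\Pi(A)\subseteq\bigcup_J\Pi_J(A)$, I would take $\lambda\in\Pi(A)$ with witness $x\neq0$ and set $J=\supp(x)$. This set is nonempty because $x\neq0$ and $x\geq0$, so $J\in\Supp_n$. Let $u:=x_J\in\R^{|J|}$; then $u>0$. Since $x$ vanishes outside $J$, we have $(Ax)_J=A_Ju$ and $(Ax)_{J^{\mathrm c}}=A_{J^{\mathrm c},J}u$. The componentwise complementarity gives $y_i=0$ for $i\in J$, so $A_Ju=\lambda u$. For $i\in J^{\mathrm c}$ we have $x_i=0$, hence $y_i=(Ax)_i\geq0$, that is, $A_{J^{\mathrm c},J}u\geq0$. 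Therefore $\lambda\in\Pi_J(A)$.

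For the reverse inclusion, I would take $\lambda\in\Pi_J(A)$ with witness $u$ and define $x\in\R^n$ by $x_J=u$ and $x_{J^{\mathrm c}}=0$. Then $x\geq0$, and $x\neq0$ because $J\neq\varnothing$ and $u>0$. On $J$ the same block computation gives $(Ax-\lambda x)_J=A_Ju-\lambda u=0$. On $J^{\mathrm c}$ it gives $(Ax-\lambda x)_{J^{\mathrm c}}=A_{J^{\mathrm c},J}u\geq0$. Hence $Ax-\lambda x\geq0$. The inner product vanishes because on each coordinate either $x_i=0$ or $(Ax-\lambda x)_i=0$.

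I expect no real obstacle here. The only step needing care is the block decomposition of $Ax$ when $x$ is supported on $J$, together with the reduction of the inner-product condition to coordinatewise complementarity. Both are immediate, and the result is simply a restatement of Seeger's criterion \cite[Theorem~4.1]{Seeger1999}.
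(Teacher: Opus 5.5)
Your proposal is correct and follows the paper's own argument: restrict a Pareto eigenvector to its support $J=\supp x$ to obtain a production on $J$, and conversely extend a producing vector by zero outside $J$. You simply write out the coordinatewise complementarity and block decomposition steps that the paper leaves implicit.
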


\begin{proof}
Let $(\lambda,x)$ satisfy the Pareto conditions and put
$J=\supp x$. Complementarity gives
$$
A_Jx_J=\lambda x_J,
\qquad
x_J>0,
\qquad
A_{J^{\mathrm c},J}x_J\geq0.
$$
Conversely, extend a producing vector by zero outside $J$. The
resulting vector satisfies the Pareto conditions.
\end{proof}

Several supports may produce the same value, so we shall choose one
support for each distinct value.

\begin{definition}
An assigned support map for $A$ is a map
$$
\tau:\Pi(A)\longrightarrow\Supp_n
$$
such that $\lambda\in\Pi_{\tau(\lambda)}(A)$. For such a map, put
$$
m_J(A,\tau)
:=
|\{\lambda\in\Pi(A):\tau(\lambda)=J\}|,
\qquad
S_k(A,\tau)
:=
\sum_{\substack{J\in\Supp_n\\ |J|=k}}m_J(A,\tau).
$$
\end{definition}

The sets
$$
\{\lambda\in\Pi(A):\tau(\lambda)=J\},
\qquad
J\in\Supp_n,
$$
form a partition of $\Pi(A)$, whereas the sets $\Pi_J(A)$ may overlap.
Consequently,
$$
|\Pi(A)|=\sum_{k=1}^nS_k(A,\tau),
\qquad
0\leq m_J(A,\tau)\leq p_J(A)\leq|J|.
$$
Kielstra's Pareto profile records the intrinsic numbers $p_J(A)$
\cite[Definition~3.6.6]{Kielstra2023}. The assigned profile records
$m_J(A,\tau)$ and counts every distinct Pareto value exactly once.
We call $(m_J(A,\tau))_{J\in\Supp_n}$ the detailed assigned profile
and $(S_1(A,\tau),\ldots,S_n(A,\tau))$ its aggregate profile.
When $A$ and $\tau$ are fixed, we omit them from $m_J(A,\tau)$ and
$S_k(A,\tau)$. Braces and commas are also omitted in support
subscripts; thus $m_{ij}=m_{\{i,j\}}(A,\tau)$.

\subsection{Regularity and principally simple matrices}

We now define regular productions, which persist under small
perturbations.

\begin{definition}
A support $J$ regularly produces $\lambda$ if $\lambda$ is an
algebraically simple eigenvalue of $A_J$ and
$$
A_{J^{\mathrm c},J}u>0
$$
for a positive eigenvector $u$. For $J=\Ind_n$, the exterior condition is
empty. We write $\Pi^{\mathrm{reg}}(A)$ for the set of values admitting
at least one regular production and define
$$
c_n^{\mathrm{reg}}
:=
\max_{A\in\R^{n\times n}}
|\Pi^{\mathrm{reg}}(A)|.
$$
\end{definition}

We now define the principally simple matrices used below, following
Kielstra \cite[Definition~3.5.1]{Kielstra2023}.

\begin{definition}
\label{def:principally-simple}
A matrix $A$ is principally simple if

\begin{enumerate}[label=\textup{(\roman*)}]
\item every real eigenvalue of every principal submatrix is
algebraically simple;
\item two distinct principal submatrices have no common real
eigenvalue.
\end{enumerate}
\end{definition}

Principal simplicity makes every production regular. Indeed, if the
slack of a production on $J$ vanishes in a row $i\notin J$, extension
by a zero coordinate gives a common real eigenvalue of $A_J$ and
$A_{J\cup\{i\}}$. The same argument shows that a Pareto value cannot
be produced by two distinct supports. Thus the assigned support map is
unique for a principally simple matrix.

For a matrix $M$ and a nonempty principal support $J$, write
$$
\chi_J^M(t):=\det(t\Id_{|J|}-M_J).
$$

The next proposition shows that principally simple matrices form an
open dense set.

\begin{proposition}
\label{prop:principal-density}
Principally simple matrices form an open dense subset of
$\R^{n\times n}$.
\end{proposition}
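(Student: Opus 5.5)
We split the proof into openness and density. Conditions (i) and (ii) will be recast as the nonvanishing of finitely many resultants and discriminants, taken over the pairs of supports and including the real-root requirement. The obstacle is that \emph{realness} of a root is not an algebraic condition. For this reason we work throughout with local continuity of roots rather than with global polynomial identities.

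\emph{Openness.} Fix a principally simple $A$. For each $J\in\Supp_n$, the real roots of $\chi_J^A$ are simple. By the implicit function theorem, each such root continues to a real simple root $\lambda_J(M)$ of $\chi_J^M$ for $M$ near $A$. By continuity of roots (Hurwitz or Rouché), the nonreal roots of $\chi_J^A$ stay in small disks disjoint from $\R$; since nonreal roots of a real polynomial come in conjugate pairs, a disk around a nonreal root contains exactly one root and so cannot create a real root. Hence, for $M$ close to $A$, the real roots of $\chi_J^M$ are exactly the continuations of those of $\chi_J^A$, and they are simple. This gives (i). For (ii), the finitely many real roots of all the $\chi_J^A$ are pairwise distinct, so they are separated by some $\delta>0$. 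Small perturbations move each root by less than $\delta/2$, so the separation persists.

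\emph{Density.} We first make every $\chi_J$ have simple roots (real or complex). The discriminant $\operatorname{disc}(\chi_J^M)$ is a polynomial in the entries of $M$. It is not identically zero, because it is nonzero at a diagonal matrix with distinct entries. Thus $\{M:\operatorname{disc}\chi_J^M\neq0\ \forall J\}$ is the complement of a proper algebraic set, hence open and dense, and it already gives (i).

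For (ii) we use resultants: $\operatorname{Res}(\chi_J^M,\chi_K^M)$ vanishes iff $\chi_J^M$ and $\chi_K^M$ share a complex root. It suffices to show that this polynomial in $M$ is not identically zero for $J\neq K$. Take $M=\operatorname{diag}(d_1,\dots,d_n)$ with the $d_i$ distinct. Then $\chi_J^M$ has roots $\{d_j:j\in J\}$, so when $J\cap K\neq\varnothing$ the two polynomials share a root and the diagonal test fails.

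This is the main obstacle. To handle it, we choose test matrices whose principal submatrices have unrelated spectra. The plan is to take $M=D+\varepsilon N$ with a generic $N$ and to show, for each pair $J\neq K$, that some common root of the diagonal case splits. Suppose without loss of generality that $i\in K\setminus J$. A cleaner way is to use the interlacing-free identity
\[
\chi_{J\cup\{i\}}(t)=(t-m_{ii})\chi_J(t)-\text{(terms involving row and column } i),
\]
and to vary only the entries $m_{ii}$ and $m_{ij}$. First consider the case $J\subsetneq K$. Take $M$ that is zero except on the diagonal, with $D_J$ distinct, and add a coupling between $i$ and $J$ chosen so that $\chi_K$ is not divisible by any factor $t-d_j$. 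For example, a symmetric coupling with nonzero entries makes the eigenvalues of $M_K$ strictly interlace those of $M_{K\setminus\{i\}}$, which is the Cauchy interlacing theorem with strict inequalities when the coupling vector has nonzero components in the eigenbasis. Then $\chi_K$ and $\chi_J$ have no common root.

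In general, pick a real symmetric $M$ whose every principal submatrix has a spectrum avoiding the finitely many spectra of the others. One way is to let $M$ be a generic symmetric matrix. Each resultant, restricted to the symmetric matrices, is a polynomial, and the interlacing construction above shows it is nonzero at one point; hence it is nonzero generically. Once no resultant vanishes identically, the set where all discriminants and all resultants are nonzero is a nonempty Zariski-open subset of $\R^{n\times n}$. It is therefore dense, and it consists of principally simple matrices.
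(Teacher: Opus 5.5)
Your openness argument and your density strategy match the paper's: discriminants and pairwise resultants are polynomials in the entries, and each is shown nonzero at one witness matrix. However, the resultant step, which you rightly call the main obstacle, is proved only for nested pairs. Consider incomparable overlapping pairs, i.e.\ $J\not\subseteq K$, $K\not\subseteq J$ and $J\cap K\neq\varnothing$, such as $J=\{1,2\}$, $K=\{2,3\}$. For these you appeal to ``a generic symmetric matrix'' and to ``the interlacing construction above.'' The first appeal is exactly the claim that this resultant is not identically zero, so it is circular. The second does not cover this case. Take $i\in K\setminus J$, $M_J$ diagonal, and $i$ coupled to $J$. Interlacing, or the arrowhead expansion, keeps $\operatorname{spec}(M_K)$ away from $d_j$ for $j\in J\cap K$. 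Nothing, however, prevents $d_j\in\operatorname{spec}(M_K)$ for $j\in J\setminus K$. For instance, take $i=3$, $(d_1,d_2,d_3)=(2,0,1)$ and $m_{23}=m_{32}=\sqrt2$. Then $\chi_K^M(t)=(t-2)(t+1)$, which shares the root $2=d_1$ with $\chi_J^M$. This happens even though the diagonal entries are distinct and the coupling is nonzero and symmetric.

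The repair is short. The entries $d_j$ with $j\in J\setminus K$ do not occur in $M_K$, so you can choose them last, outside the finite set $\operatorname{spec}(M_K)$. Alternatively, take the coupling small, since $\chi_K^M(d_j)\to\prod_{m\in K}(d_j-d_m)\neq0$. This is the mirror image of the paper's witness. For every pair, the paper picks $i\in J\setminus K$ and makes $A_J$ an arrowhead centred at $i$ with $a_{i\ell}a_{\ell i}\neq0$ for $\ell\in J\cap K$, so that $\chi_J(a_{\ell\ell})\neq0$. It keeps $A_K$ diagonal and puts its entries on $K\setminus J$ outside $\operatorname{spec}(A_J)$. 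That construction handles nested and incomparable pairs uniformly and needs neither symmetry nor interlacing.

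There are also two minor slips. In the openness part, a disk around a nonreal root need not contain exactly one root, since principal simplicity allows multiple nonreal eigenvalues; you only need the disk to miss $\R$. In the nested case with $|K\setminus J|\geq2$, coupling $i$ only to $J$ does not give strict interlacing with $\operatorname{spec}(M_{K\setminus\{i\}})$. Your conclusion still holds there provided all diagonal entries on $K$ are distinct.
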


\begin{proof}
Openness follows from continuity of polynomial roots. At a principally
simple matrix, the finitely many real principal eigenvalues are simple
and mutually separated, while every nonreal principal eigenvalue stays
a positive distance from the real axis under a sufficiently small
perturbation.

For each $J\in\Supp_n$, let
$$
\chi_J(t):=\det(t\Id_{|J|}-A_J).
$$
The discriminant of $\chi_J$ and the resultant of $\chi_J,\chi_K$ for
$J\neq K$ are polynomials in the entries of $A$. None of these
polynomials is identically zero. The assertion for a discriminant
follows from a diagonal matrix with distinct entries.

For a fixed pair $J\neq K$, assume after exchanging the sets that
$i\in J\setminus K$. Choose distinct diagonal entries on $J$. On $J$,
use $i$ as the centre of an arrowhead matrix and choose
$$
a_{i\ell}a_{\ell i}\neq0
\qquad
\text{for every }\ell\in J\cap K.
$$
At the diagonal entry indexed by $\ell\in J\cap K$, expansion of the
arrowhead determinant gives $\chi_J(a_{\ell\ell})\neq0$. Now take
$A_K$ diagonal. Its entries on $J\cap K$ are already fixed, and its
entries on $K\setminus J$ can be chosen outside the finite spectrum of
$A_J$. This gives
$$
\operatorname{spec}(A_J)\cap\operatorname{spec}(A_K)
=\varnothing,
$$
so the corresponding resultant is not the zero polynomial.

The product of these finitely many discriminants and resultants is
therefore a nonzero polynomial. Its nonvanishing set is Euclidean-open
and dense, and every matrix in it is principally simple.
\end{proof}

A regular production persists on the same support under a small
perturbation.

\begin{proposition}
\label{prop:regular-stability}
Let $\lambda_1,\ldots,\lambda_p$ be distinct Pareto values of $A$ with
regular productions on supports $J_1,\ldots,J_p$. Every sufficiently
small perturbation of $A$ has $p$ distinct nearby values regularly
produced on the same supports.
\end{proposition}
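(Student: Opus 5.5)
The plan is to treat each regular production separately by the implicit function theorem applied to the simple eigenvalue, and then use the finiteness of $p$ and continuity. Fix $k\in\{1,\ldots,p\}$, write $J=J_k$, $\lambda=\lambda_k$, and let $u>0$ be the positive eigenvector from the regular production, normalized by $\one_J^{\top}u=1$. Consider the map
$$
F(B,v,\mu):=\bigl(B_Jv-\mu v,\ \one_J^{\top}v-1\bigr),
$$
defined on $\R^{n\times n}\times\R^J\times\R$. We have $F(A,u,\lambda)=0$. The partial Jacobian in $(v,\mu)$ is the bordered matrix
$$
\begin{pmatrix} A_J-\lambda\Id & -u\\ \one_J^{\top} & 0\end{pmatrix}.
$$
Its invertibility is the only nonroutine point. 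Because $\lambda$ is algebraically simple, $\ker(A_J-\lambda\Id)=\R u$ and $u\notin\operatorname{range}(A_J-\lambda\Id)$. Otherwise, a generalized eigenvector would exist and the algebraic multiplicity would be at least two. Now suppose $(w,s)$ lies in the kernel. Then $(A_J-\lambda\Id)w=su$, which forces $s=0$, so $w=cu$. The condition $\one^{\top}w=0$ with $\one^{\top}u=1$ then gives $c=0$.

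By the implicit function theorem, there are a neighbourhood $U_k$ of $A$ and smooth maps $B\mapsto(v_k(B),\mu_k(B))$ with $v_k(A)=u$ and $\mu_k(A)=\lambda$ such that $B_Jv_k(B)=\mu_k(B)v_k(B)$. Shrinking $U_k$ gives the remaining properties by continuity:
\begin{enumerate}[label=\textup{(\alph*)}]
\item $v_k(B)>0$, since $u>0$ and there are finitely many strict inequalities;
\item $B_{J^{\mathrm c},J}v_k(B)>0$, since $A_{J^{\mathrm c},J}u>0$ strictly (this is vacuous if $J=\Ind_n$);
\item $\mu_k(B)$ remains an algebraically simple eigenvalue of $B_J$.
\end{enumerate}
For (c), continuity of the roots of $\chi_J^B$ keeps exactly one root of $\chi_J^B$ in a small disc around $\lambda$. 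That root is $\mu_k(B)$, which is real, and it is simple. Hence on $U_k$ the support $J_k$ regularly produces $\mu_k(B)\in\R$.

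Finally, let $\delta>0$ be less than half the minimal distance between the distinct values $\lambda_1,\ldots,\lambda_p$. Shrink the neighbourhoods further so that $|\mu_k(B)-\lambda_k|<\delta$ on $U:=\bigcap_kU_k$. Then the values $\mu_1(B),\ldots,\mu_p(B)$ are pairwise distinct, and they are regularly produced on $J_1,\ldots,J_p$ for every $B\in U$, which proves Proposition~\ref{prop:regular-stability}. I expect no serious obstacle. The only delicate step is the invertibility of the bordered Jacobian, and it rests on algebraic (not merely geometric) simplicity, which is exactly what the definition of regular production supplies.
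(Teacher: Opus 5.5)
Your proof is correct and follows the paper's route: you apply the implicit function theorem to the normalized system $M_Jv=\mu v$, $\one_J^{\top}v=1$, and invertibility of the bordered Jacobian comes from algebraic simplicity. Where the paper tests against a left eigenvector, you use the equivalent fact $u\notin\operatorname{range}(A_J-\lambda\Id)$; then continuity gives positivity, strict slacks and separation of the values, and your step (c), showing via the roots of $\chi_J^B$ that $\mu_k(B)$ stays algebraically simple, makes explicit a point the paper leaves to the standard simple-eigenvalue perturbation argument.
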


\begin{proof}
For each $\ell$, normalize the positive eigenvector $u_\ell$ by
$$
\one_{J_\ell}^{\top}u_\ell=1.
$$
Since $\lambda_\ell$ is an algebraically simple real eigenvalue of the
real matrix $A_{J_\ell}$, the system
$$
M_{J_\ell}u=\mu u,
\qquad
\one_{J_\ell}^{\top}u=1
$$
has, by the implicit function theorem, a unique local real solution
$$
M\longmapsto
\bigl(\lambda_\ell(M),u_\ell(M)\bigr)
$$
through $(\lambda_\ell,u_\ell)$. The derivative with respect to
$(\mu,u)$ is invertible because $\lambda_\ell$ is algebraically simple;
this can be checked by testing the linearized eigenvalue equation
against a left eigenvector. Hence the eigenvalue and the normalized
eigenvector depend continuously on $M$ near $A$. The normalization
fixes the sign of the eigenvector, and $u_\ell(M)>0$ for $M$
sufficiently close to $A$.

The exterior slack
$$
M_{J_\ell^{\mathrm c},J_\ell}u_\ell(M)
$$
also depends continuously on $M$ and remains strictly positive.
Finally, choose pairwise disjoint real neighborhoods of
$\lambda_1,\ldots,\lambda_p$ and reduce the matrix neighborhood once
more so that the eigenvalue branches remain distinct. This is the
usual simple-eigenvalue perturbation argument
\cite[Chapter~6]{HornJohnson2013}.
\end{proof}

Combining density and stability gives the principally simple
perturbation needed later.

\begin{corollary}
\label{cor:regular-principal-perturbation}
If $A$ has $p$ distinct regularly produced Pareto values, every
neighborhood of $A$ contains a principally simple matrix, with nonzero
off-diagonal entries and pairwise distinct diagonal entries, which has
at least $p$ distinct regular Pareto values.
\end{corollary}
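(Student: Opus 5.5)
The plan is to combine Proposition~\ref{prop:regular-stability} with the density statement of Proposition~\ref{prop:principal-density}, after adding two further open dense conditions. First, Proposition~\ref{prop:regular-stability} gives a neighborhood $U$ of $A$ such that every $M\in U$ has $p$ distinct values regularly produced on the supports $J_1,\ldots,J_p$. The defining conditions are strict inequalities (positivity of the normalized eigenvector and of the exterior slack), together with simplicity and separation of the eigenvalue branches. We may therefore shrink $U$ to an open neighborhood contained in any prescribed neighborhood of $A$, and the conclusion holds throughout $U$.

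Second, let $G$ be the set of matrices that are principally simple, have all off-diagonal entries nonzero, and have pairwise distinct diagonal entries. By Proposition~\ref{prop:principal-density}, principal simplicity is an open dense condition. Nonvanishing of each off-diagonal entry $a_{ij}$, $i\neq j$, is the nonvanishing set of a nonzero linear polynomial, so it is open and dense. The same holds for each difference $a_{ii}-a_{jj}$ with $i<j$. Hence $G$ is the nonvanishing set of the product of the discriminant–resultant polynomial from Proposition~\ref{prop:principal-density} with these finitely many nonzero polynomials. This product is a nonzero polynomial, because $\R[a_{ij}]$ is an integral domain. Its nonvanishing set is therefore open and dense, since a nonzero polynomial cannot vanish on a nonempty open set.

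Finally, since $G$ is dense and $U$ is a nonempty open set, $G\cap U\neq\varnothing$. Any $M\in G\cap U$ has the required properties. It is principally simple, its off-diagonal entries are nonzero, and its diagonal entries are pairwise distinct. By the choice of $U$, it has at least $p$ distinct Pareto values regularly produced on $J_1,\ldots,J_p$, so $|\Pi^{\mathrm{reg}}(M)|\geq p$.

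The argument has no real obstacle. The only point needing care is the first step: we must make sure the neighborhood given by Proposition~\ref{prop:regular-stability} can be taken open and arbitrarily small. This holds because its proof gives the stated conclusion on a whole ball around $A$, and any smaller ball also works.
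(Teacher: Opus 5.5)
Your proof is correct and follows the paper's argument: you intersect the stability neighborhood from Proposition~\ref{prop:regular-stability} with the dense set of principally simple matrices having nonzero off-diagonal entries and pairwise distinct diagonal entries. One small precision: $G$ contains, rather than equals, the nonvanishing set of your product polynomial, because discriminants and resultants also vanish at repeated or shared nonreal roots; containment is all you need for density.
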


\begin{proof}
The conditions on the entries are open and dense. Intersect them with
the dense set in Proposition~\ref{prop:principal-density} and with the
stability neighborhood from
Proposition~\ref{prop:regular-stability}.
\end{proof}

\subsection{The sharp lower example}

The following matrix, due to Victor Qi, attains the lower bound.

\begin{proposition}
\label{prop:lower-bound}
The matrix
$$
A_\star=
\begin{pmatrix}
100&106&-18&-81\\
92&158&-24&-101\\
2&44&37&-7\\
21&38&0&2
\end{pmatrix}
$$
has $23$ distinct regular Pareto eigenvalues. In particular,
$$
c_4^{\mathrm{reg}}\geq23
\qquad\text{and}\qquad
c_4\geq23.
$$
\end{proposition}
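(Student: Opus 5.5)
The plan is a direct certified verification through the support criterion, Proposition~\ref{prop:support-criterion}: every nonempty $J\subseteq\Ind_4$ is examined, all values regularly produced by $J$ are listed, and we check that exactly $23$ distinct values appear in the union. There are $15$ supports: four of size $1$, six of size $2$, four of size $3$, and $\Ind_4$ itself, giving at most $4+12+12+4=32$ candidate principal eigenvalues. For each $J$ we compute $\chi_J^{A_\star}$ exactly over $\mathbb Z$. We isolate its real roots with rational intervals, by Sturm sequences or Descartes' rule on refined intervals. We also confirm simplicity by checking that $\gcd(\chi_J,\chi_J')=1$.

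Size $1$ is explicit. The support $\{i\}$ produces $a_{ii}$ regularly exactly when column $i$ is strictly positive off the diagonal. Column $1$ is $(92,2,21)$ off the diagonal, so $100$ is produced. Column $2$ is $(106,44,38)$, so $158$ is produced. Columns $3$ and $4$ have negative entries, so they produce nothing. Size $2$ is also explicit. For $J=\{i,j\}$ the eigenvector of a real simple eigenvalue $\lambda$ is proportional to $(a_{ij},\lambda-a_{ii})$ or a similar closed form. Positivity and the two exterior slacks then become sign conditions on quadratic irrationals, which we decide exactly.

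Sizes $3$ and $4$ involve cubic and quartic roots. For each isolated real root $\lambda$ of $\chi_J$, we take the eigenvector as a column of the adjugate $\operatorname{adj}(\lambda\Id-A_J)$. Its entries are polynomials in $\lambda$ with integer coefficients. We then determine the sign of each entry, and of each exterior slack $A_{J^{\mathrm c},J}u$, at the algebraic number $\lambda$. This is done either by interval arithmetic on a sufficiently refined isolating interval, or exactly, by reducing modulo $\chi_J$ and taking a resultant or Sturm count. We must also check that the chosen adjugate column is nonzero. That is automatic for a simple eigenvalue, provided we pick a nonvanishing column.

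Finally we check distinctness. The produced values come from different supports, so we show that the relevant characteristic polynomials share no real roots by computing pairwise resultants $\operatorname{Res}(\chi_J,\chi_K)\neq0$. Alternatively, it is enough that the isolating intervals are disjoint. Adding up the regular productions should give $23$, and then $c_4^{\mathrm{reg}}\geq23$ and $c_4\geq23$ follow at once from the definitions.

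The main obstacle is the exact sign determination at algebraic numbers of degree $3$ and $4$. Some slacks may be close to zero, and a sloppy floating-point check would not count as a proof. I would first try interval refinement to about $10^{-30}$ precision with rational endpoints, and I would fall back on exact Sturm-sequence sign evaluation for any slack whose interval still contains $0$.
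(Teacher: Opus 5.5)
Your plan is correct and is essentially the paper's argument. The paper certifies by exact rational arithmetic that all $15$ principal characteristic polynomials are square-free, that $23$ rational intervals each isolate one real root with strictly positive active coordinates and exterior slacks, and that these intervals are pairwise disjoint; the only difference is that it takes the $23$ candidates from the Adly--Seeger table, whereas you enumerate all supports. For the distinctness step, prefer your disjoint-interval check, because pairwise resultants taken over all supports do vanish for this matrix: for example, $\chi_{\{3\}}$ and $\chi_{\{3,4\}}$ share the root $37$ since $a_{43}=0$.
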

\begin{proof}
This matrix was communicated by Victor Qi and reported by Adly and
Seeger \cite[Section~2.2.2, p.~309, Table~2]{AdlySeeger2011}. Their
table gives numerical approximations to $23$ Pareto eigenpairs and
their dual vectors. The exact symbolic computation described in
Remark~\ref{rem:maple-certificate} places these $23$ candidates in
pairwise disjoint rational isolating intervals and verifies the strict
support criterion. It also verifies
$$
\gcd\bigl(\chi_J^{A_\star},(\chi_J^{A_\star})'\bigr)=1
\qquad
\text{for every }J\in\Supp_4.
$$
Thus each corresponding principal eigenvalue is algebraically simple,
each production is regular, and the $23$ Pareto eigenvalues are
distinct.
\end{proof}

The exact computation is described below.

\begin{remark}
\label{rem:maple-certificate}
The $23$ Pareto eigenvalues of $A_\star$ were verified by an exact symbolic computation performed with Maple 2021.2. The computation uses only integer and rational arithmetic. It verifies that the $15$ principal characteristic polynomials are square-free, that each of the $23$ rational intervals contains exactly one real root, that all associated active coordinates and exterior slacks are strictly positive, and that the intervals are pairwise disjoint. No floating-point approximation from \cite[Table~2]{AdlySeeger2011} is used. The Maple worksheet and the complete output are available from the author upon request.
\end{remark}
%%%%%%%%%%%
\section{Fixed point formulation and parity}
\label{sec:parity}

\subsection{The normalized fixed point map}

The standard simplex is
$$
\Delta:=\{x\in\R_+^n:\one^\top x=1\}.
$$
For a vector $y$, the notation $y_+$ means the componentwise positive
part. The norm $\|A\|_1$ is induced by the vector $1$-norm.

Adly and Rammal proved the positive-part equivalence
\cite[Lemma~3 and Remark~3]{AdlyRammal2013}
$$
0\leq x\perp(\mu x-Cx)\geq0
\quad\Longleftrightarrow\quad
(Cx)_+=\mu x,
\qquad
\mu>0.
$$
Here $x\perp y$ means $\langle x,y\rangle=0$.
We apply it to $C=\gamma\Id_n-A$ and $\mu=\gamma-\lambda$.

The normalized form of this equation gives the required fixed point
representation.

\begin{proposition}
\label{prop:fixed-point-representation}
Let $A\in\R^{n\times n}$ and choose $\gamma>\|A\|_1$. The map
$$
F_A:\Delta\longrightarrow\Delta,
\qquad
F_A(x):=
\frac{((\gamma\Id_n-A)x)_+}
{\one^\top((\gamma\Id_n-A)x)_+},
$$
is well defined. Its fixed points are exactly the normalized Pareto
eigenvectors of $A$. If $x\in\Fix F_A$, its Pareto eigenvalue is
$$
\lambda
=
\gamma-\one^\top((\gamma\Id_n-A)x)_+.
$$
\end{proposition}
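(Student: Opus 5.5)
We have to prove three things: the denominator of $F_A$ never vanishes on $\Delta$, every fixed point is a normalized Pareto eigenvector with the stated eigenvalue, and conversely every normalized Pareto eigenvector is a fixed point.

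\textbf{Well-definedness.} Put $C:=\gamma\Id_n-A$ and take $x\in\Delta$. Then
$$
\one^\top Cx=\gamma-\one^\top Ax\geq\gamma-\|Ax\|_1\geq\gamma-\|A\|_1\|x\|_1=\gamma-\|A\|_1>0 .
$$
Since $\one^\top(Cx)_+\geq\one^\top Cx$, the denominator is strictly positive. So $F_A$ is well defined and continuous, and it maps into $\Delta$ because its values are nonnegative with coordinate sum $1$.

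\textbf{Fixed points are Pareto eigenvectors.} Let $x=F_A(x)$ and set $\mu:=\one^\top(Cx)_+>0$. Then $(Cx)_+=\mu x$. By the Adly--Rammal equivalence with $\mu>0$, this gives
$$
0\leq x\perp(\mu x-Cx)\geq0 .
$$
Now $\mu x-Cx=Ax-(\gamma-\mu)x$. Hence $x\neq0$ is a Pareto eigenvector with eigenvalue $\lambda=\gamma-\mu$, which is the stated formula.

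\textbf{Pareto eigenvectors are fixed points.} Let $x\in\Delta$ satisfy the Pareto conditions with value $\lambda$, and set $\mu:=\gamma-\lambda$.

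First we show $\mu>0$. Since $\one^\top x=1$, the relation $\langle x,Ax-\lambda x\rangle=0$ does not directly give a bound, so we argue through supports instead. Let $J=\supp x$. Then $A_Jx_J=\lambda x_J$, so $\lambda$ is an eigenvalue of $A_J$. Extending the eigenvector by zero gives $\|Ax\|_1\geq\|A_Jx_J\|_1=|\lambda|\,\|x_J\|_1$. Therefore
$$
|\lambda|\leq\|A\|_1<\gamma ,
$$
and so $\mu>0$.

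Next, rewrite the complementarity conditions as $0\leq x\perp(\mu x-Cx)\geq0$. The equivalence then gives $(Cx)_+=\mu x$. Summing coordinates yields $\one^\top(Cx)_+=\mu\,\one^\top x=\mu$, hence $F_A(x)=x$.

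The only point needing care is the bound $|\lambda|<\gamma$ in the last step, which guarantees $\mu>0$ so that the positive-part equivalence applies. Everything else is direct substitution into that equivalence.
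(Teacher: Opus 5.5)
Your proof is correct and follows essentially the same route as the paper. You use the same $1$-norm estimate $\one^\top(\gamma\Id_n-A)x\geq\gamma-\|A\|_1>0$ for well-definedness, the same bound $|\lambda|\leq\|A\|_1<\gamma$ to get $\gamma-\lambda>0$, and the Adly--Rammal positive-part equivalence for the converse; the only minor difference is that for fixed points you invoke the reverse direction of that equivalence, whereas the paper reads off the support production $B_Jx_J=\rho x_J>0$, $B_{J^{\mathrm c},J}x_J\leq0$ directly.
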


\begin{proof}
Put $B=\gamma\Id_n-A$. The denominator cannot vanish. Otherwise
$Bx\leq0$, whereas
$$
\one^\top Bx
=
\gamma-\one^\top Ax
\geq
\gamma-\|Ax\|_1
\geq
\gamma-\|A\|_1
>0.
$$

Let $x\in\Fix F_A$, put $J=\supp x$, and set
$$
\rho:=\one^\top(Bx)_+.
$$
The fixed point equation gives
$$
B_Jx_J=\rho x_J>0,
\qquad
B_{J^{\mathrm c},J}x_J\leq0.
$$
Thus $J$ produces $\lambda=\gamma-\rho$.

Conversely, suppose that $J$ produces $\lambda$ through $u>0$. The
eigenvalue bound gives
$$
|\lambda|
\leq
\|A_J\|_1
\leq
\|A\|_1
<\gamma.
$$
Extend $u$ by zero and normalize it in $\Delta$. Since
$$
(\gamma-\lambda)x-Bx=Ax-\lambda x,
$$
the positive-part equivalence gives
$$
(Bx)_+=(\gamma-\lambda)x.
$$
Its normalization is the fixed point equation for $F_A$.
\end{proof}

\subsection{Fixed point index and parity}

We use the fixed point index on compact polyhedra. Let $X$ be a compact
polyhedron, let $G:X\to X$ be continuous, and let $U$ be relatively
open in $X$ with
$$
\Fix G\cap\partial_XU=\varnothing,
$$
where $\partial_XU$ is the boundary relative to $X$. The integer
$\ind(G,U)$ denotes the fixed point index of $G$ in $U$, relative to
$X$. If $x$ is an isolated fixed point, then $\ind(G,x)$ is
$\ind(G,U)$ for any sufficiently small isolating neighborhood $U$ of
$x$. Excision makes this definition independent of $U$. We use the
existence, additivity, homotopy invariance, and commutativity properties
from \cite[Section~12, pp.~305-326]{GranasDugundji2003}.

For a continuous self-map $G$ of a finite polyhedron $X$, its
Lefschetz number is
$$
L(G)
:=
\sum_{q\geq0}(-1)^q
\operatorname{tr}\bigl(
G_{*q}:H_q(X;\mathbb Q)\to H_q(X;\mathbb Q)
\bigr),
$$
where $H_q(X;\mathbb Q)$ is singular homology with rational
coefficients and $G_{*q}$ is the induced map.

We now prove that $|\Pi(A)|$ is odd when every Pareto value is produced
by exactly one support and this production is regular.

\begin{theorem}
\label{thm:parity}
Let $A\in\R^{n\times n}$. Assume that every value in $\Pi(A)$ is
produced by exactly one support and that this production is regular.
Then
$$
|\Pi(A)|\equiv1\pmod2.
$$
\end{theorem}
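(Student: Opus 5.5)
The plan is to apply the Lefschetz--Hopf theorem to the map $F_A:\Delta\to\Delta$ of Proposition~\ref{prop:fixed-point-representation}. Since $\Delta$ is a compact convex polyhedron, hence contractible, its rational homology is that of a point. Therefore $L(F_A)=1$. By the existence and additivity properties of the index, whenever $\Fix F_A$ is finite we have
$$
\sum_{x\in\Fix F_A}\ind(F_A,x)=L(F_A)=1.
$$

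\textbf{Step 1: the fixed points correspond bijectively to $\Pi(A)$.} A fixed point $x$ with support $J$ yields a positive eigenvector $x_J$ of $A_J$ for the value $\lambda$. Since that value is a simple eigenvalue of $A_J$, the normalized eigenvector is unique, so $x$ is unique for the pair $(J,\lambda)$. By hypothesis each value has exactly one producing support. Hence the normalized Pareto eigenvectors, and thus $\Fix F_A$, are in bijection with $\Pi(A)$, and in particular they are finite and isolated. The parity claim therefore reduces to showing that $\ind(F_A,x)=\pm1$ for every fixed point. Indeed, a sum of $|\Pi(A)|$ terms equal to $\pm1$ that equals $1$ forces $|\Pi(A)|$ to be odd.

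\textbf{Step 2: local structure near a fixed point.} Let $x$ be a fixed point with support $J$, and put $B=\gamma\Id_n-A$. Regularity gives
$$
B_{J^{\mathrm c},J}x_J=-A_{J^{\mathrm c},J}x_J<0.
$$
Near $x$, the coordinates of $Bx$ indexed by $J$ are positive and those indexed by $J^{\mathrm c}$ are negative. Hence the positive part acts as a linear projection: $(By)_+=P_J By$ on a neighbourhood $U$ of $x$ in $\Delta$, where $P_J$ keeps the $J$-coordinates. So $F_A$ maps $U$ into the face $\Delta_J$. By the commutativity property (or by retracting onto the face), $\ind(F_A,x)$ relative to $\Delta$ equals the index of the restriction of $F_A$ to $\Delta_J$ at $x$, which is the normalized linear map
$$
y\mapsto \frac{B_Jy}{\one^\top B_Jy}.
$$
More concretely, one can homotope $F_A$ on $U$ to this composite without creating fixed points on $\partial U$. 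This uses the fact that the exterior inequalities are strict, which makes fixed points of nearby maps lie in $\Delta_J$.

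\textbf{Step 3: the index on the face.} On the relative interior of $\Delta_J$, the map is smooth, and $x_J$ is an interior point of the face. Its index is $\sgn\det(\Id-DG)$, where $DG$ is the differential on the tangent space $\{\one^\top v=0\}$. Differentiating $G(y)=B_Jy/\one^\top B_Jy$ at $x_J$ with $\rho=\gamma-\lambda>0$ gives
$$
DG\,v=\rho^{-1}\bigl(B_Jv-(\one^\top B_Jv)x_J\bigr).
$$
The eigenvalues of $DG$ on the tangent space are $(\gamma-\mu)/(\gamma-\lambda)$, where $\mu$ ranges over the eigenvalues of $A_J$ other than $\lambda$, counted with multiplicity. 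This follows because the quotient $\R^J/\R x_J$ is isomorphic to the tangent space. So $1$ is an eigenvalue of $DG$ exactly when $\mu=\lambda$ occurs again, which algebraic simplicity excludes. Hence $\det(\Id-DG)\neq0$, the fixed point is nondegenerate, and its index is $\pm1$. (Explicitly, the sign is $\sgn\prod_{\mu\neq\lambda}(\mu-\lambda)$ over the real eigenvalues, since complex pairs contribute positively.)

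\textbf{Main obstacle.} The hardest step is the rigorous reduction in Step~2 from the index relative to $\Delta$ to the index relative to the face $\Delta_J$. A fixed point lying on the boundary of $\Delta$ is not an interior point, so the standard smooth index formula does not apply directly. I would handle this first with the commutativity property, applied to $F_A=i\circ g$, where $g:U\to\Delta_J$ and $i$ is the inclusion. If that proves awkward, I would fall back on an explicit homotopy in $U$ from $F_A$ to $y\mapsto G(\pi_J y)$, where $\pi_J$ is a retraction onto the face, and check that no fixed points cross $\partial U$ along the homotopy.
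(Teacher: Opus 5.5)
Your proposal is correct and follows the paper's proof essentially step for step: the fixed points of $F_A$ are in bijection with $\Pi(A)$, the commutativity property reduces each index to the face $\Delta_J$, algebraic simplicity gives nondegeneracy and index $\pm1$, and Lefschetz--Hopf with $L(F_A)=1$ forces the count to be odd. The one difference is cosmetic. You identify the spectrum of the derivative on the tangent space through the quotient $\R^J/\R x_J$, which also yields the explicit sign $\sgn\prod_{\mu\neq\lambda}(\mu-\lambda)$, whereas the paper excludes the eigenvalue $1$ by pairing with a left eigenvector.
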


\begin{proof}
Choose $\gamma>\|A\|_1$, put $B=\gamma\Id_n-A$, and define
$$
F:=F_A.
$$
Proposition~\ref{prop:fixed-point-representation} identifies its fixed
points with the normalized Pareto eigenvectors.

The hypothesis gives one normalized fixed point for each Pareto value.
It also makes every exterior slack strict and every active principal
eigenvalue simple. Hence
$$
|\Fix F|=|\Pi(A)|.
$$

Fix $x\in\Fix F$ with support $J$. Choose a relatively open isolating
neighborhood $U$ of $x$ so small that the positive coordinates of
$By$ are exactly those in $J$ for $y\in U$, and that $F(U)$ is
contained in the face
$$
\Delta_J:=\{y\in\Delta:\supp y\subseteq J\}.
$$
Regard $F|_U$ as a map $G:U\to\Delta_J$, and let
$i_J:\Delta_J\to\Delta$ be the inclusion. Then
$F|_U=i_J\circ G$. On $U\cap\Delta_J$, the composition
$G\circ i_J$ is the reduced map
$$
F_J(y)
=
\frac{B_Jy}{\one_J^\top B_Jy},
\qquad
y\in U\cap\Delta_J.
$$
We use the local commutativity property in the following form. If
$X,Y$ are polyhedra, $W$ is open in $X$, $f:W\to Y$ and $g:Y\to X$
are continuous, and the relevant fixed point sets are compact, then
$$
\ind_X(g\circ f,W)
=
\ind_Y(f\circ g,g^{-1}(W)).
$$
Apply this formula with
$$
X=\Delta,
\qquad
Y=\Delta_J,
\qquad
W=U,
\qquad
f=G,
\qquad
g=i_J.
$$
Since $i_J^{-1}(U)=U\cap\Delta_J$, the commutativity property
\cite[Section~12, Theorem~6.2(VII), pp.~317-318]{GranasDugundji2003} gives the exact
local identity
$$
\ind_\Delta(F,U)
=
\ind_{\Delta_J}(F_J,U\cap\Delta_J).
$$
This reduction remains valid when $x$ lies on the boundary of the
ambient simplex.

The vector $x_J$ is an eigenvector of $B_J$ for the simple eigenvalue
$\rho=\gamma-\lambda$. On the tangent space
$$
T_x\Delta_J
=
\{h\in\R^J:\one_J^\top h=0\},
$$
the derivative is
$$
DF_J(x)h
=
\frac{B_Jh-x_J\one_J^\top B_Jh}{\rho}.
$$
It has no eigenvalue equal to $1$. Indeed, suppose that
$DF_J(x)h=h$ and set $\alpha=\one_J^\top B_Jh$. Then
$$
(B_J-\rho\Id_{|J|})h=x_J\alpha.
$$
Choose a left eigenvector $w$ for $\rho$ with $w^\top x_J=1$.
Multiplication by $w^\top$ gives $\alpha=0$. Simplicity then gives
$h\in\R x_J$, and the tangent condition gives $h=0$.

Every fixed point is nondegenerate on its face and
$$
\ind(F,x)\in\{-1,1\}.
$$
Here we used the local index formula
\cite[Section~12, pp.~326-329]{GranasDugundji2003}. The simplex is a
compact contractible polyhedron. Hence
$$
H_0(\Delta;\mathbb Q)\simeq\mathbb Q,
\qquad
H_q(\Delta;\mathbb Q)=0\quad(q\geq1),
$$
and $L(F)=1$. Since $\Fix F$ is finite, the Lefschetz-Hopf formula
\cite[Section~16, pp.~447-451]{GranasDugundji2003} gives
$$
\sum_{x\in\Fix F}\ind(F,x)=L(F)=1.
$$
Modulo $2$, every local index is equal to $1$. Hence
$$
|\Pi(A)|
=
|\Fix F|
\equiv1\pmod2.
$$
\end{proof}

Principal simplicity implies the hypotheses of the parity theorem.

\begin{corollary}
\label{cor:principally-simple-parity}
If $A\in\R^{n\times n}$ is principally simple, then
$$
|\Pi(A)|\equiv1\pmod2.
$$
\end{corollary}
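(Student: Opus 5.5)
The plan is to verify the two hypotheses of Theorem~\ref{thm:parity} for a principally simple matrix $A$ and then apply that theorem directly. Both hypotheses come from the remark following Definition~\ref{def:principally-simple}; I will write that argument out explicitly, since everything rests on it.

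\emph{Step 1: every production is regular.} Let $J$ produce $\lambda$ through $u>0$. By condition (i), $\lambda$ is an algebraically simple eigenvalue of $A_J$, because it is a real eigenvalue of a principal submatrix. It remains to show that the exterior slack $A_{J^{\mathrm c},J}u$ is strictly positive. If $J=\Ind_n$ there is nothing to check. Otherwise, suppose $(A_{J^{\mathrm c},J}u)_i=0$ for some $i\notin J$, and let $v\in\R^{J\cup\{i\}}$ be $u$ extended by a zero in coordinate $i$. Then
$$
A_{J\cup\{i\}}v=\lambda v,
$$
since the rows indexed by $J$ reproduce $A_Ju=\lambda u$, and row $i$ gives $(A_{J^{\mathrm c},J}u)_i=0=\lambda\cdot 0$. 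So $\lambda$ is a common real eigenvalue of the distinct principal submatrices $A_J$ and $A_{J\cup\{i\}}$, which contradicts (ii). Hence the slack is strictly positive and the production is regular.

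\emph{Step 2: uniqueness of the producing support.} If two distinct supports $J\neq K$ both produce $\lambda$, then $\lambda$ is a real eigenvalue of both $A_J$ and $A_K$, again contradicting (ii). So each value in $\Pi(A)$ is produced by exactly one support, and by Step 1 that production is regular.

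\emph{Step 3: conclusion.} The hypotheses of Theorem~\ref{thm:parity} now hold, so $|\Pi(A)|\equiv1\pmod2$. No step is a real obstacle. The only point needing care is the zero-extension in Step 1: one must check that row $i$ of the extended eigen-equation is exactly the vanishing slack component, and that the column of $A_{J\cup\{i\}}$ indexed by $i$ does not contribute because $v_i=0$.
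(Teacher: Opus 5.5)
Your proposal is correct and follows the same route as the paper: you verify the hypotheses of Theorem~\ref{thm:parity} from principal simplicity and then apply it. You also spell out the zero-extension argument that the paper only sketches in the remark after Definition~\ref{def:principally-simple}, and that expansion is accurate.
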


\begin{proof}
Every Pareto value then has a unique producing support, its principal
eigenvalue is algebraically simple, and all exterior slacks are strict.
Theorem~\ref{thm:parity} applies.
\end{proof}

The local calculation remains valid when only one production is
regular.

\begin{lemma}
\label{lem:regular-local-index}
Let $A\in\R^{n\times n}$, choose $\gamma>\|A\|_1$, and define
$$
F_A(x):=
\frac{((\gamma\Id_n-A)x)_+}
{\one^\top((\gamma\Id_n-A)x)_+},
\qquad x\in\Delta.
$$
A fixed point arising from a regular production is isolated, is
nondegenerate on its active face, and has local index $1$ or $-1$.
\end{lemma}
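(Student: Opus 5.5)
The plan is to repeat the local part of the proof of Theorem~\ref{thm:parity}, keeping only what depends on the single regular production. Let $x\in\Fix F_A$ come from a regular production of $\lambda$ on the support $J$, and put $B=\gamma\Id_n-A$ and $\rho=\gamma-\lambda$. Since $\lambda$ is simple for $A_J$, the normalized positive eigenvector is unique, so $x$ is well defined. By Proposition~\ref{prop:fixed-point-representation} we have $\rho=\one^\top(Bx)_+>0$ and $B_Jx_J=\rho x_J>0$. Regularity gives strict exterior slack, $A_{J^{\mathrm c},J}x_J>0$, hence $B_{J^{\mathrm c},J}x_J=-A_{J^{\mathrm c},J}x_J<0$. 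These are finitely many strict inequalities at $x$, so by continuity there is a relatively open neighborhood $U$ of $x$ in $\Delta$ on which $(By)_j>0$ for $j\in J$ and $(By)_j<0$ for $j\notin J$. On $U$ this gives $F_A(U)\subseteq\Delta_J$ and $F_A|_U=i_J\circ G$ with $G\circ i_J=F_J$, where $F_J(y)=B_Jy/\one_J^\top B_Jy$.

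Next I establish isolation. Any fixed point $y\in U$ lies in $F_A(U)\subseteq\Delta_J$, so $y\in U\cap\Delta_J$ and $B_Jy_J=\rho' y_J$ with $\rho'=\one_J^\top B_Jy_J$. The eigenvalue $\rho$ of $B_J$ is simple, so it is isolated in $\operatorname{spec}(B_J)$, and its normalized eigenvector is unique. Continuity of $\rho'$ in $y$ then means that, after shrinking $U$, the only solution is $y=x$. Alternatively, isolation follows from nondegeneracy via the inverse function theorem on the face. For nondegeneracy I reuse the computation from Theorem~\ref{thm:parity} verbatim: on $T_x\Delta_J$ the derivative is $DF_J(x)h=(B_Jh-x_J\one_J^\top B_Jh)/\rho$. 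If $DF_J(x)h=h$, pairing with a left eigenvector $w$ satisfying $w^\top x_J=1$ gives $\one_J^\top B_Jh=0$. Simplicity then forces $h\in\R x_J$, and the tangent condition gives $h=0$. So $1$ is not an eigenvalue of $DF_J(x)$ restricted to $T_x\Delta_J$.

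Finally, the commutativity property \cite[Section~12, Theorem~6.2(VII)]{GranasDugundji2003}, applied exactly as before with $X=\Delta$, $Y=\Delta_J$, $W=U$, $f=G$ and $g=i_J$, gives
$$
\ind_\Delta(F_A,U)=\ind_{\Delta_J}(F_J,U\cap\Delta_J).
$$
The point $x$ is interior to $\Delta_J$ because $x_J>0$. Therefore the local index formula for a smooth map with a nondegenerate fixed point in the relative interior of a face applies, and it gives $\sgn\det(\Id-DF_J(x)|_{T_x\Delta_J})\in\{-1,1\}$.

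The main obstacle is making sure that nothing uses the global hypotheses of Theorem~\ref{thm:parity}. In particular, other fixed points with the same value $\lambda$ on other supports must not interfere. This is handled entirely by the choice of $U$: the strict sign pattern forces every fixed point in $U$ onto $\Delta_J$ with eigenvalue near $\rho$, and there is only one such point.
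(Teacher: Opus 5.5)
Your proposal is correct and follows the paper's proof. Strict exterior slacks confine $F_A$ near $x$ to the face $\Delta_J$, the derivative computation from Theorem~\ref{thm:parity} gives nondegeneracy on $T_x\Delta_J$, and the commutativity property transfers the local index $\pm1$ from $\Delta_J$ to $\Delta$. Your direct isolation argument, using the spectral gap of $B_J$ together with the uniqueness of the normalized eigenvector, is a harmless addition; the paper deduces isolation from nondegeneracy.
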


\begin{proof}
Let $J$ be the support and let $\rho=\gamma-\lambda$. Since the
exterior slacks are strict, the positive coordinates of
$(\gamma\Id_n-A)y$ are exactly those in $J$ for $y$ near the fixed
point. Hence the local map takes its values in $\Delta_J$.
On that face the local map is
$$
y\longmapsto
\frac{(\gamma\Id_{|J|}-A_J)y}
{\one_J^\top(\gamma\Id_{|J|}-A_J)y}.
$$
The eigenvalue $\rho$ is algebraically simple. The derivative
calculation in the proof of Theorem~\ref{thm:parity}, applied on
$T_x\Delta_J$, shows that $1$ is not an eigenvalue of the derivative.
The fixed point is therefore isolated and nondegenerate on its face.
By the same commutativity argument as in
Theorem~\ref{thm:parity}, its local index in $\Delta$ equals the local
index of the reduced map on $\Delta_J$. It is therefore $1$ or $-1$.
\end{proof}

A matrix with at least $24$ regular Pareto values would therefore lead
to a principally simple matrix with exactly $25$ values.

\begin{corollary}
\label{cor:regular-to-25}
If some matrix of order $4$  has at least $24$ distinct regular Pareto
values, then there is a principally simple matrix with nonzero
off-diagonal entries and pairwise distinct diagonal entries that has
exactly $25$ Pareto eigenvalues.
\end{corollary}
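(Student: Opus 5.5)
We combine Corollary~\ref{cor:regular-principal-perturbation}, Corollary~\ref{cor:principally-simple-parity}, and the general upper bound $c_4\leq26$ of Baillon and Seeger recalled in the introduction.

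Let $A\in\R^{4\times4}$ have at least $24$ distinct regular Pareto values. By Corollary~\ref{cor:regular-principal-perturbation}, every neighborhood of $A$ contains a principally simple matrix $A'$, with nonzero off-diagonal entries and pairwise distinct diagonal entries, that has at least $24$ distinct regular Pareto values. Since $\Pi^{\mathrm{reg}}(A')\subseteq\Pi(A')$, we get
$$
|\Pi(A')|\geq24.
$$
The general bound $c_4\leq26$ applies to every real matrix of order $4$, so
$$
24\leq|\Pi(A')|\leq26.
$$

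Because $A'$ is principally simple, Corollary~\ref{cor:principally-simple-parity} shows that $|\Pi(A')|$ is odd. The only odd integer in $\{24,25,26\}$ is $25$, so $|\Pi(A')|=25$. Thus $A'$ has all the properties required in the statement.

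The argument has no real obstacle. The one point to check is that the perturbation in Corollary~\ref{cor:regular-principal-perturbation} keeps the entry conditions and principal simplicity simultaneously with at least $24$ values. That corollary already provides this, by intersecting the open dense sets with the stability neighborhood of Proposition~\ref{prop:regular-stability}.
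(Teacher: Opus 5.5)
Your proof is correct and is the paper's argument: Corollary~\ref{cor:regular-principal-perturbation} gives a principally simple matrix with the required entry conditions and at least $24$ Pareto values. The parity result (Corollary~\ref{cor:principally-simple-parity}) together with the Baillon--Seeger upper bound $26$ then forces exactly $25$.
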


\begin{proof}
Apply Corollary~\ref{cor:regular-principal-perturbation}. The resulting
principally simple matrix has at least $24$ Pareto values. Its spectrum
has odd cardinality by
Corollary~\ref{cor:principally-simple-parity}, and it has at most
$26$ values by the paragraph following Corollary~1 in
\cite[p.~8]{BaillonSeeger2021}, using
\cite[Lemma~5.1]{Seeger1999}. Its cardinality is therefore $25$.
\end{proof}

\section{Local restrictions in order four}
\label{sec:local-identities}

In the regular case, a counterexample would lead to a principally
simple matrix with exactly $25$ Pareto eigenvalues. We now give the
restrictions used to exclude this case. They concern pair supports,
principal submatrices of order $3$, and determinants associated with
the full matrix.

Let $A\in\R^{4\times4}$ and fix an assigned support map $\tau$. Put
$$
d_i:=a_{ii}.
$$
For a triple $T\subseteq\Ind_4$, put
$$
N_T(A,\tau)
:=
\sum_{\substack{J\in\Supp_4\\J\subseteq T}}m_J(A,\tau).
$$
We write $N_T$ when $A$ and $\tau$ are fixed. The bounds $c_2=3$ and
$c_3=9$, applied to the corresponding principal submatrices, give
$$
m_i+m_j+m_{ij}\leq3,
\qquad
N_T\leq9.
$$
Braces and commas are omitted in support subscripts.

A pair support $ij$ is rich when
$$
p_{ij}(A)=2.
$$
We denote by $G_2(A)$ the graph on $\Ind_4$ whose edges are the rich
pair supports. A triple $T$ is saturated when $p_T(A)=3$. This
intrinsic condition concerns production by $T$ itself and differs from
the assigned full-capacity condition $N_T=9$ on the whole
order-three face.

\subsection{Pair supports and order-three faces}

The quadratic eigenvalue equation fixes the orientation of a rich pair.

\begin{lemma}
\label{lem:pair-orientation}
If $ij$ is rich, then
$$
a_{ij}a_{ji}<0,
\qquad
\sgn(a_{ij})=\sgn(d_j-d_i).
$$
\end{lemma}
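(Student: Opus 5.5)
The plan is to work directly with the $2\times2$ principal submatrix
$$
A_{ij}=\begin{pmatrix}d_i&a_{ij}\\ a_{ji}&d_j\end{pmatrix}
$$
and to translate the existence of two distinct values $\lambda_1\neq\lambda_2$, each with a positive eigenvector $u=(u_i,u_j)>0$, into sign conditions on the entries. Richness means $p_{ij}(A)=2$. Since $A_{ij}$ has order two, both of its eigenvalues are then real, distinct, and each admits a strictly positive eigenvector. The exterior conditions are not needed for this lemma; only $A_{ij}u=\lambda u$ with $u>0$ is used.

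\textbf{Off-diagonal signs.} Write the eigenvector equations as
$$
(\lambda-d_i)u_i=a_{ij}u_j,
\qquad
(\lambda-d_j)u_j=a_{ji}u_i .
$$
If $a_{ij}=0$, then $\lambda=d_i$ follows from the first equation because $u_i>0$. If $a_{ji}=0$, then $\lambda=d_j$ similarly. When both vanish, each $\lambda$ lies in $\{d_i,d_j\}$ and must equal both, so $d_i=d_j$ and only one value is produced, which is a contradiction. When exactly one vanishes, say $a_{ij}=0$, every produced value equals $d_i$, again giving one value; the case $a_{ji}=0$ is symmetric. Hence $a_{ij}a_{ji}\neq0$.

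Multiplying the two equations and dividing by $u_iu_j>0$ gives
$$
(\lambda-d_i)(\lambda-d_j)=a_{ij}a_{ji}.
$$
From the first equation, $\sgn(\lambda-d_i)=\sgn(a_{ij})$, and from the second, $\sgn(\lambda-d_j)=\sgn(a_{ji})$. Suppose $a_{ij}a_{ji}>0$. If both are positive, then $\lambda>\max(d_i,d_j)$; if both are negative, then $\lambda<\min(d_i,d_j)$. The product $(t-d_i)(t-d_j)$ is strictly monotone on each of the two outer intervals, so the equation has at most one root on the relevant side. This yields only one produced value and contradicts richness. Therefore $a_{ij}a_{ji}<0$.

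\textbf{Orientation.} Now $\sgn(\lambda-d_i)=\sgn(a_{ij})$ and $\sgn(\lambda-d_j)=\sgn(a_{ji})=-\sgn(a_{ij})$. If $a_{ij}>0$, every produced $\lambda$ satisfies $d_i<\lambda<d_j$, which forces $d_j>d_i$. If $a_{ij}<0$, then $d_j<\lambda<d_i$, which forces $d_j<d_i$. In both cases $\sgn(a_{ij})=\sgn(d_j-d_i)$. Note that $d_i\neq d_j$ is automatic, because the open interval between them contains $\lambda$.

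The main point requiring care is the monotonicity argument that excludes $a_{ij}a_{ji}>0$; it is the only step where the count of two values is used, and the rest is bookkeeping of signs.
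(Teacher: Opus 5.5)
Your proof is correct, but it eliminates a different unknown from the paper's. The paper normalizes the eigenvector as $(1,t)^\top$ and eliminates $\lambda$. This gives the quadratic $a_{ij}t^2+(d_i-d_j)t-a_{ji}=0$ in the ratio $t$. Both conclusions then follow at once from Vieta's formulas: two distinct positive roots force the product $-a_{ji}/a_{ij}$ and the sum $(d_j-d_i)/a_{ij}$ to be positive.

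You instead eliminate the eigenvector and obtain $(\lambda-d_i)(\lambda-d_j)=a_{ij}a_{ji}$, together with the sign constraints $\sgn(\lambda-d_i)=\sgn a_{ij}$ and $\sgn(\lambda-d_j)=\sgn a_{ji}$. You then rule out $a_{ij}a_{ji}>0$ because the product is injective on each outer interval.

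The paper's route is shorter. However, it tacitly uses two facts: that $a_{ij}\neq0$, so the equation really is quadratic, and that distinct values give distinct ratios, which holds since $\lambda=d_i+a_{ij}t$. Your case analysis makes the nondegeneracy explicit. Your route also gives slightly more: once $a_{ij}a_{ji}<0$ is known, the value of every single production on $ij$ lies strictly between $d_i$ and $d_j$. The orientation therefore follows from one positive eigenvector, rather than from the sum of both roots.
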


\begin{proof}
Write a positive eigenvector as $(1,t)^\top$, with $t>0$. Elimination
of the eigenvalue gives
$$
a_{ij}t^2+(d_i-d_j)t-a_{ji}=0.
$$
Two distinct positive roots have positive product and sum. These two
conditions give the stated signs.
\end{proof}

A producing singleton fixes every incident rich-pair orientation.

\begin{corollary}
\label{cor:singleton-orientation}
If the singleton $i$ produces and $ij$ is rich, then
$$
d_i>d_j.
$$
\end{corollary}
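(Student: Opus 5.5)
The plan is to combine the singleton production condition with the sign information from Lemma~\ref{lem:pair-orientation}. If the singleton $i$ produces a value, that value must be $d_i$, with eigenvector $u=1$. The exterior condition $A_{\{i\}^{\mathrm c},i}u\geq0$ then says that the column entries $a_{ki}$ are nonnegative for every $k\neq i$. In particular, $a_{ji}\geq0$.

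Next we use the hypothesis that $ij$ is rich. By Lemma~\ref{lem:pair-orientation}, $a_{ij}a_{ji}<0$, so $a_{ji}\neq0$. Combined with the previous step, this gives $a_{ji}>0$, and therefore $a_{ij}<0$.

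The lemma also gives $\sgn(a_{ij})=\sgn(d_j-d_i)$. Since $a_{ij}<0$, we get $d_j-d_i<0$, that is, $d_i>d_j$.

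No serious obstacle is expected. The only point to check is that the nonstrict inequality $a_{ji}\geq0$ coming from the exterior condition is upgraded to a strict one. This follows from the strict product inequality $a_{ij}a_{ji}<0$ in Lemma~\ref{lem:pair-orientation}. Note also that the argument does not need regularity of the singleton production.
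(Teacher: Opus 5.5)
Your proof is correct and follows the paper's argument: singleton production gives $a_{ji}\geq0$, richness upgrades this to $a_{ji}>0$, and Lemma~\ref{lem:pair-orientation} then yields $d_i>d_j$. The only cosmetic difference is the orientation used: the paper applies the lemma with the indices reversed, $\sgn(a_{ji})=\sgn(d_i-d_j)$, whereas you first deduce $a_{ij}<0$ and then use $\sgn(a_{ij})=\sgn(d_j-d_i)$.
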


\begin{proof}
Singleton production gives $a_{ji}\geq0$, while richness gives
$a_{ji}\neq0$. Hence $a_{ji}>0$.
Lemma~\ref{lem:pair-orientation}, applied with the reversed orientation,
gives
$$
\sgn(a_{ji})=\sgn(d_i-d_j).
$$
\end{proof}

Two producing singletons cannot be joined by a rich pair.

\begin{corollary}
\label{cor:singleton-independent}
The producing singleton vertices form an independent set in the
graph $G_2(A)$.
\end{corollary}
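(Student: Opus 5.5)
The argument is by contradiction, using Corollary~\ref{cor:singleton-orientation} twice. Suppose the singletons $i$ and $j$ both produce and that $ij$ is an edge of $G_2(A)$, so the pair $ij$ is rich.

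First apply Corollary~\ref{cor:singleton-orientation} with the singleton $i$ and the rich pair $ij$. This gives $d_i>d_j$.

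Next, the richness condition $p_{ij}(A)=2$ does not depend on the order of $i$ and $j$. So the same corollary also applies with the roles exchanged, using the producing singleton $j$ and the same rich pair. This gives $d_j>d_i$.

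The two strict inequalities cannot both hold, which is the contradiction. Hence no edge of $G_2(A)$ joins two producing singletons. This is precisely the statement that the producing singleton vertices form an independent set.

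The only point to check is that Lemma~\ref{lem:pair-orientation} and Corollary~\ref{cor:singleton-orientation} are symmetric in the roles of the two indices. In the proof of the corollary, singleton production of $i$ gives $a_{ji}\geq0$ from the exterior slack, and richness gives $a_{ji}\neq0$ through the lemma. This reasoning works equally well with $i$ and $j$ interchanged. No further obstacle is expected.
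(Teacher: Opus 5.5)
Your argument is correct: applying Corollary~\ref{cor:singleton-orientation} with the roles of $i$ and $j$ exchanged yields the incompatible inequalities $d_i>d_j$ and $d_j>d_i$. The paper's proof rests on the same Lemma~\ref{lem:pair-orientation} but is one step shorter: two producing singletons give $a_{ij}\geq0$ and $a_{ji}\geq0$, which contradicts $a_{ij}a_{ji}<0$ directly, without passing through the diagonal entries.
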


\begin{proof}
If $i$ and $j$ both produce, then $a_{ij}\geq0$ and $a_{ji}\geq0$.
This is incompatible with Lemma~\ref{lem:pair-orientation}.
\end{proof}

Rich pairs cannot form a triangle.

\begin{lemma}
\label{lem:triangle-free}
The graph $G_2(A)$ is triangle-free.
\end{lemma}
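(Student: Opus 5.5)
Suppose $ij$, $jk$, $ki$ are all rich. Lemma~\ref{lem:pair-orientation} gives, for each edge,
$$
\sgn(a_{ij})=\sgn(d_j-d_i),\qquad \sgn(a_{jk})=\sgn(d_k-d_j),\qquad \sgn(a_{ki})=\sgn(d_i-d_k),
$$
together with $a_{ji}=-$ sign of $a_{ij}$, and so on. The plan is to derive a contradiction purely from these sign constraints, with no quantitative estimate needed.

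First, the edges $ij$ and $ji$ carry opposite signs, since $a_{ij}a_{ji}<0$. Moreover $\sgn(a_{ij})=\sgn(d_j-d_i)$ says exactly that $a_{ij}>0$ if and only if $d_j>d_i$. In other words, for every rich pair the positive off-diagonal entry is $a_{\ell m}$ with $d_m>d_\ell$. This is consistent for any ordering of the diagonal entries. So the orientation condition alone gives no contradiction: with $d_i<d_j<d_k$ we get $a_{ij},a_{jk},a_{ik}>0$ and $a_{ji},a_{kj},a_{ki}<0$, which is a consistent sign pattern. Hence a genuine argument must use more than the two sign conditions.

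Second, I would exploit the quadratic more fully. Richness means $a_{ij}t^2+(d_i-d_j)t-a_{ji}=0$ has two distinct positive roots, so its discriminant is positive:
$$
(d_i-d_j)^2>-4a_{ij}a_{ji}=4|a_{ij}a_{ji}|.
$$
I would normalize the order $d_1<d_2<d_3$ on the triple, relabelled $i,j,k$. I would then look for a contradiction by combining the three discriminant inequalities with the information that $p_{ij}=2$ also requires both eigenvectors to be produced, i.e.\ the exterior slacks are nonnegative. For pair $ij$ this means row $k$ satisfies $a_{ki}+a_{kj}t\geq0$ at both roots $t_1,t_2>0$. With $a_{ki}<0$ and $a_{kj}<0$ under the ordering above, the slack of row $k$ on the pair $ij$ is $a_{ki}+a_{kj}t<0$ for every $t>0$. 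This is the contradiction: the pair $ij$ cannot produce at all.

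So the concrete proof is short. Order the diagonal on the triangle as $d_i<d_j<d_k$, which is possible since the entries differ when pairs are rich (from $\sgn(a_{ij})=\sgn(d_j-d_i)$ with $a_{ij}\neq0$). The orientation lemma applied to $ik$ and $jk$ gives $a_{ki}<0$ and $a_{kj}<0$. The exterior slack in row $k$ of any positive vector supported on $\{i,j\}$ is then negative, so $p_{ij}=0$, contradicting richness.

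The main point to check, and the only potential obstacle, is that this use of the exterior row $k$ is legitimate. Richness is defined through $p_{ij}(A)$, which counts production in the full matrix, so the slack in row $k$ is indeed required. The argument also works whether or not the fourth row's slack matters.
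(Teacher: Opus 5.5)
Your final argument is correct and is the paper's proof. You order the diagonal entries on the triangle and apply the orientation lemma to the two rich pairs through the vertex with the largest diagonal entry, which gives two negative entries in that row; the exterior slack of the opposite pair in that row is then negative (the detour through discriminants can simply be dropped).
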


\begin{proof}
Suppose that $12,13,23$ are rich and, after relabeling, that
$d_1<d_2<d_3$. Rich-pair orientation gives
$$
a_{31}<0,
\qquad
a_{32}<0.
$$
The exterior slack in row $3$ is then negative for every positive
eigenvector on the pair $12$, a contradiction.
\end{proof}

The general pair bound improves by one in order $4$ .

\begin{lemma}
\label{lem:S2-bound}
For every matrix of order $4$  and every assigned support map,
$$
S_2\leq9.
$$
\end{lemma}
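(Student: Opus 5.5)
We bound $S_2=\sum_{|J|=2}m_J$ by $\sum_{|J|=2}p_J$, using the six pair supports of $\Ind_4$. Each pair has $p_{ij}\le 2$. Hence
$$
S_2\le\sum_{i<j}p_{ij}(A)\le 6+e,
$$
where $e$ is the number of edges of $G_2(A)$, that is, the number of rich pairs. Every non-rich pair contributes at most $1$. Lemma~\ref{lem:triangle-free} says $G_2(A)$ is triangle-free on four vertices. By Mantel's theorem, $e\le\lfloor 16/4\rfloor=4$, with equality only for the $4$-cycle $K_{2,2}$. This crude count gives only $S_2\le 10$. So the whole argument reduces to excluding the configuration in which $G_2(A)$ is a $4$-cycle and the two non-rich pairs, the diagonals of the cycle, also each produce a value.

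Suppose, after relabeling, that the rich pairs are $12,23,34,41$ and that the pairs $13$ and $24$ each produce at least one value. We first orient the cycle with Lemma~\ref{lem:pair-orientation}. The sign of $a_{ij}$ on each rich edge is $\sgn(d_j-d_i)$, and $a_{ij}a_{ji}<0$. Since the diagonal entries $d_i$ are real numbers, some vertex of the cycle has the largest diagonal entry. Relabel so that $d_1$ is maximal among $d_1,\dots,d_4$. Then $d_1>d_2$ and $d_1>d_4$, so on the rich edges at vertex $1$,
$$
a_{21}<0,\qquad a_{41}<0,\qquad a_{12}>0,\qquad a_{14}>0.
$$
Now look at a production on the diagonal pair $24$, with eigenvector $u=(u_2,u_4)>0$. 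Its exterior slack in row $1$ is $a_{12}u_2+a_{14}u_4>0$, which is harmless. Its slack in row $3$ involves $a_{32}$ and $a_{34}$, whose signs depend on $d_3$ relative to $d_2$ and $d_4$. The useful constraint comes from the pair $13$ instead. Its slack in row $2$ is $a_{21}v_1+a_{23}v_3$, and its slack in row $4$ is $a_{41}v_1+a_{43}v_3$. Since $a_{21}<0$ and $a_{41}<0$, both must be compensated, which requires $a_{23}>0$ and $a_{43}>0$. By the orientation lemma this forces $d_3>d_2$ and $d_3>d_4$.

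So vertex $3$ is also a local maximum of the diagonal along the cycle, and $2$ and $4$ are local minima. Repeat the argument with the roles swapped, taking a production of $24$ and looking at its slacks in rows $1$ and $3$. The rich-edge signs are now
$$
a_{12}>0,\quad a_{14}>0,\quad a_{32}>0,\quad a_{34}>0,
$$
so these slacks are automatically positive and give no contradiction. The contradiction must therefore come from elsewhere.

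The argument I would try relies on the quadratic equations. On the rich pair $12$, the eigenvector ratio $t=u_2/u_1$ solves
$$
a_{12}t^2+(d_1-d_2)t-a_{21}=0,
$$
and both roots are positive. Each rich pair produces two values, and the slacks of both productions must be nonnegative in the two remaining rows. For pair $12$, the row-$3$ slack is $a_{31}+a_{32}t\ge0$ for both roots $t$. Here $a_{32}>0$, and $a_{31}$ is an entry of the non-rich pair $13$. Combined with the slack conditions for $13$, we get linear inequalities in the ratios $a_{31}/a_{32}$, $a_{13}/a_{23}$, and so on. Each diagonal pair $ij$ must satisfy its own quadratic with a positive root. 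I would then check that the eight inequalities from the four rich pairs, with two roots each, together with the positive-root conditions for $13$ and $24$, are inconsistent, using products and sums of roots to eliminate the $t$'s.

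This sign and inequality bookkeeping is the main obstacle. If the direct elimination fails, the fallback is to invoke $c_3=9$ on a face $T$. For example, in $N_{123}$, the pairs $12$ and $23$ contribute $4$ and $13$ contributes $1$. The additional structure could then be checked against the order-three profile classification from \cite{Adly2026}.
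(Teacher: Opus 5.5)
You reduce correctly, and in the same way as the paper: $S_2\leq\sum_{i<j}p_{ij}(A)\leq6+e\leq10$, and $S_2=10$ forces $G_2(A)=C_4$ with both diagonals producing. There is a genuine gap: you never exclude this case. The proposal ends with an elimination of eight inequalities that is never carried out and a speculative fallback to the order-three classification, so as written it does not prove the lemma.

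The obstruction you ran into is a sign error. By Lemma~\ref{lem:pair-orientation}, $\sgn(a_{ij})=\sgn(d_j-d_i)$ on a rich pair $ij$. You quote this rule yourself, but with $d_1$ maximal it gives $a_{12}<0$, $a_{14}<0$ and $a_{21}>0$, $a_{41}>0$. These are the reverse of the signs you wrote. You also apply the rule inconsistently later. From $a_{23}>0$ you correctly infer $d_3>d_2$, but you then list $a_{32}>0$, whereas $d_3>d_2$ forces $a_{32}<0$.

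With the correct signs, the slack you called harmless is exactly the contradiction. A production of the diagonal pair $24$ through $u=(u_2,u_4)>0$ has row-$1$ exterior slack $a_{12}u_2+a_{14}u_4<0$. This is incompatible with $24$ producing. That is the paper's proof. Take a vertex $r$ of maximal diagonal entry. Its two incident rich edges make the corresponding entries of row $r$ negative. The nonedge joining the two neighbours of $r$ then has negative exterior slack in row $r$. No quadratic elimination is needed, and no appeal to $c_3=9$ or to the profiles of \cite{Adly2026}.
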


\begin{proof}
For every pair,
$$
m_{ij}\leq p_{ij}(A)
\leq1+\indic_{\{ij\text{ is rich}\}}.
$$
If $e$ is the number of edges of $G_2(A)$, the triangle-free property
and the elementary four-vertex bound give
$$
S_2\leq6+e\leq10.
$$
Assume that $S_2=10$. All these inequalities are equalities:
$G_2(A)$ has four edges and is a cycle $C_4$, every rich pair has two
assigned values, and each of the two nonedges has one. Choose a vertex
$r$ with maximal diagonal entry. Its two incident rich edges force the
corresponding off-diagonal entries in row $r$ to be negative. The
nonedge opposite $r$ joins its two neighbors, so its exterior slack in
row $r$ is negative. This contradicts its production.
\end{proof}

We next state the restrictions on principal submatrices of order $3$.
They do not require regularity. We also list the full-capacity profiles
used later.
For $T=\{i,j,k\}$ with $i<j<k$, a local profile is listed in the order
$$
(m_i,m_j,m_k;\,m_{ij},m_{ik},m_{jk};\,m_T).
$$
The order-three restrictions now take the following form.

\begin{proposition}
\label{prop:order-three-restrictions}
Let $A\in\R^{4\times4}$, let $\tau$ be an assigned support map, and let
$T=\{i,j,k\}\subseteq\Ind_4$.

\begin{enumerate}[label=\textup{(\roman*)}]
\item If all three singleton supports in $T$ produce, then $m_T\leq1$.
\item If $i$ produces and $ij,ik$ are rich, then $m_{jk}=0$.
\item If $i,j$ produce and $ik,jk$ are rich, then $m_T\leq2$.
\end{enumerate}

If $N_T=9$, then, up to a permutation of $T$, the detailed profile on
$T$ is one of
$$
(1,0,0;1,2,2;3),
\qquad
(1,1,0;1,2,1;3),
\qquad
(1,1,0;1,2,2;2).
$$
\end{proposition}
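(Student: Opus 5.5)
Throughout, I work on the principal submatrix $A_T$. A support $J\subseteq T$ produces $\lambda$ for $A$ only if it produces $\lambda$ for $A_T$, since the exterior rows of $A$ contain those of $A_T$. So every restriction is a statement about order-three matrices and does not need regularity. I will use the sign information from Lemma~\ref{lem:pair-orientation} and Corollaries~\ref{cor:singleton-orientation} and~\ref{cor:singleton-independent}, together with Perron--Frobenius type arguments on $A_T$.

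For (i), if $i,j,k$ all produce, then every off-diagonal entry of $A_T$ is nonnegative. So $A_T$ is essentially nonnegative, and $A_T+s\Id$ is nonnegative for large $s$. For a nonnegative matrix, every eigenvalue with a positive eigenvector equals the spectral radius. Hence $T$ produces at most one value, and $m_T\le p_T(A)\le1$.

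For (ii), relabel so that $i=1$, $j=2$, $k=3$. Corollary~\ref{cor:singleton-orientation} gives $d_1>d_2$ and $d_1>d_3$. Lemma~\ref{lem:pair-orientation} then gives $a_{12}<0$ and $a_{13}<0$. For the pair $23$, the exterior slack in row $1$ is $a_{12}u_2+a_{13}u_3<0$ for every positive $u$. So $23$ produces nothing, and $m_{23}=0$.

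For (iii), with $i=1$, $j=2$, $k=3$, singleton production gives $a_{21},a_{31},a_{12},a_{32}\ge0$. Richness forces $a_{31},a_{32}>0$. Lemma~\ref{lem:pair-orientation} then gives $a_{13}<0$ and $a_{23}<0$, with $d_3<d_1$ and $d_3<d_2$. In $A_T$ the only negative off-diagonal entries are $a_{13}$ and $a_{23}$, both in column $3$. Conjugating by $\mathrm{diag}(1,1,-1)$ turns $A_T$ into a matrix whose off-diagonal entries are nonnegative in the $12$ block and in column $3$, but whose entries $a_{31},a_{32}$ become negative. This alone is not clean.

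Instead I would count directly. A positive eigenvector $u$ satisfies $(\lambda-d_3)u_3=a_{31}u_1+a_{32}u_2>0$, so $\lambda>d_3$. Its first two rows read $(\lambda\Id-A_{12})(u_1,u_2)^\top=u_3(a_{13},a_{23})^\top<0$. Since $A_{12}$ is essentially nonnegative, whenever $\lambda$ exceeds the Perron root of $A_{12}$ the matrix $(\lambda\Id-A_{12})^{-1}$ is nonnegative, and then the left side cannot be positive. So $\lambda$ lies below that Perron root. The characteristic polynomial of $A_T$ has three roots, and I would show that at most two of them lie in this window with a positive eigenvector, by a sign-change count of $\chi_T$ on the interval. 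This is the part I expect to be hardest, and I would try explicit parametrization if the sign count fails.

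For the classification when $N_T=9$, I would use $c_3=9$ and the classification of full-capacity profiles of order three from \cite{Adly2026}. The task is to turn their intrinsic profiles into assigned counts. As a check, these counts satisfy $m_i+m_j+m_{ij}\le3$, $m_{\bullet\bullet}\le2$, $m_T\le3$, and they total $9$. I would enumerate the vectors meeting these constraints. Triangle-freeness of rich pairs (the argument of Lemma~\ref{lem:triangle-free} restricted to $T$) caps the pair total at $5$. Then (i), (ii) and (iii) eliminate all cases except the three listed: for example, two singletons with pair total $5$ force (iii) and hence $m_T\le2$, and with a single singleton the two rich pairs at that vertex are ruled out by (ii).
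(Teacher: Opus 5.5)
The reduction to $A_T$, your proofs of (i) and (ii), and the final enumeration are correct. The enumeration takes $m_i+m_j+m_{ij}\le 3$, uses triangle-freeness to get at most two pairs with $m=2$, and then applies (i)--(iii) according to the number of producing singletons. It yields exactly the three listed profiles and works directly with the global assigned counts. It is therefore a genuinely different route from the paper, which simply cites \cite[Lemmas~3.5--3.7 and Theorem~5.1]{Adly2026} and transfers via $c_3=9$; your version would make that step self-contained. However, the enumeration uses (iii) twice, and (iii) is not proved. You only show that produced values lie in $(d_k,\rho)$ and leave the rest to an unspecified sign-change count.

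That count cannot succeed with the information you have collected. You use the richness of $ik,jk$ only through signs: $a_{ki},a_{kj}>0>a_{ik},a_{jk}$ and $d_k<d_i,d_j$. With $(i,j,k)=(1,2,3)$, consider
\[
M=\begin{pmatrix}5&8&-1\\ 25/8&5&-1\\ 100/39&992/39&1\end{pmatrix}.
\]
It has all these signs, and the singletons $1,2$ produce. Yet $\det(t\Id_3-M)=(t-2)(t-4)(t-5)$, with positive eigenvectors $(5/16,1/128,1)$, $(7/24,17/192,1)$ and $(8/25,1/8,1)$. So the full support produces three values, all inside your window $(1,10)$.

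What fails in $M$ is richness itself. The pair $13$ has eigenvector ratios $u_3/u_1=2\pm\sqrt{56/39}$, and the larger one exceeds $25/8$, so its row-$2$ slack is negative. The block $M_{\{2,3\}}$ has no real eigenvalues. A proof of (iii) must therefore use the positivity of the exterior slacks of both productions of each rich pair, not just sign information.

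The paper's own tools do exactly this. Suppose $T$ were saturated. Lemma~\ref{lem:determinant-identities} expresses $\delta_{i,T}$ through the row-$i$ slacks of the two $jk$-productions, and $\delta_{j,T}$ through the row-$j$ slacks of the two $ik$-productions. Combined with the common sign of $\delta_{1,T},\delta_{2,T},\delta_{3,T}$, Lemma~\ref{lem:rich-path-median} then places $d_k$ strictly between $d_i$ and $d_j$. But Corollary~\ref{cor:singleton-orientation} gives $d_k<d_i$ and $d_k<d_j$, a contradiction. Hence $p_T(A)\le 2$ and $m_T\le 2$. Neither of these results depends on the proposition, so this closes the gap without circularity.
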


\begin{proof}
The restrictions and the full-capacity classification in order $3$
are \cite[Lemmas~3.5-3.7 and Theorem~5.1]{Adly2026}. If $N_T=9$, the
nine globally assigned values remain distinct Pareto eigenvalues of
$A_T$. Since $c_3=9$, there is no additional local value, and the cited
classification applies.
\end{proof}

\subsection{Observability signs}

The following determinants relate signs on a saturated triple to signs
in the full matrix. We use only their signs.

Let $e_1,\ldots,e_4$ be the standard basis of $\R^4$.
For $r\in\Ind_4$, define
$$
D_r
:=
\det
\begin{pmatrix}
e_r^\top\\
e_r^\top A\\
e_r^\top A^2\\
e_r^\top A^3
\end{pmatrix}.
$$
For a triple $T$ and $i\in T$, define
$$
\delta_{i,T}
:=
\det
\begin{pmatrix}
e_i^\top\\
e_i^\top A_T\\
e_i^\top A_T^2
\end{pmatrix},
$$
where the coordinates in $T$ are kept in increasing global order.
Write $\operatorname{pos}_T(i)$ for the position of $i$ in this order.

The next two identities relate rich pairs and saturated triples to the
full support.

\begin{lemma}
\label{lem:determinant-identities}
Let $T=\Ind_4\setminus\{r\}$ be saturated, and let
$u_1,u_2,u_3>0$ be eigenvectors corresponding to the three values
produced by $T$.

If the exterior slacks of these productions are strict in row $r$, put
$$
s_\ell:=A_{r,T}u_\ell>0.
$$
Then, for every $i\in T$,
$$
D_r
=
(-1)^{r+1}\delta_{i,T}
\frac{\prod_{\ell=1}^3s_\ell}
{\prod_{\ell=1}^3(u_\ell)_i}.
$$

If $T\setminus\{i\}=\{j,k\}$, with $j<k$, is rich, let
$t_-<t_+$ be its two positive eigenvector ratios and put
$$
L_i(t):=a_{ij}+a_{ik}t.
$$
Then
$$
\delta_{i,T}
=
(-1)^{\operatorname{pos}_T(i)-1}
a_{jk}L_i(t_-)L_i(t_+).
$$
\end{lemma}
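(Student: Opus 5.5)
Both identities follow from one device. Multiply the Krylov-type matrix on the right by a basis made of the coordinate vector $e_r$ (resp. $e_i$) together with eigenvectors of the complementary principal block. That block is $A_T$ in the first identity and $A_{\{j,k\}}$ in the second. Then read off the product as a Vandermonde determinant scaled by the slacks.

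\emph{First identity.} Let $\lambda_1,\lambda_2,\lambda_3$ be the three values produced by $T$; they are distinct by saturation. Let $v_\ell\in\R^4$ be $u_\ell$ extended by $0$ in coordinate $r$. Then
$$
Av_\ell=\lambda_\ell v_\ell+s_\ell e_r .
$$
Let $K$ be the matrix with rows $e_r^\top A^k$, $k=0,\dots,3$, and put $V=[e_r,v_1,v_2,v_3]$. The first row of $KV$ is $(1,0,0,0)$, so $\det(KV)$ equals the $3\times3$ minor with entries $f_k(\ell):=e_r^\top A^kv_\ell$ for $k=1,2,3$. Applying the displayed relation repeatedly gives
$$
f_k(\ell)=s_\ell\bigl(\lambda_\ell^{k-1}+c_1\lambda_\ell^{k-2}+\dots+c_{k-1}\bigr),
\qquad c_m:=(A^m)_{rr}.
$$
The coefficients $c_m$ do not depend on $\ell$. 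Row operations therefore reduce the minor to $\prod_\ell s_\ell\cdot\det[\lambda_\ell^{k-1}]$, a Vandermonde determinant $W(\lambda)$. Moving $e_r$ from the first column of $V$ to position $r$ gives $\det V=(-1)^{r+1}\det U$, where $U=[u_1,u_2,u_3]$. Hence
$$
D_r\,(-1)^{r+1}\det U=\prod_\ell s_\ell\,W(\lambda).
$$

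The same computation on $T$ uses rows $e_i^\top A_T^k$ and columns $u_\ell$. Here $e_i^\top A_T^ku_\ell=\lambda_\ell^k(u_\ell)_i$ exactly, so
$$
\delta_{i,T}\det U=\prod_\ell(u_\ell)_i\,W(\lambda).
$$
The vectors $u_\ell$ belong to distinct eigenvalues, so $\det U\neq0$, and $W(\lambda)\neq0$. Dividing the two relations gives the first formula.

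\emph{Second identity.} Work inside $\R^T$ with $w_\pm$ the vector having entry $1$ at $j$, entry $t_\pm$ at $k$, and $0$ at $i$. These are eigenvectors of $A_{\{j,k\}}$ for $\mu_\pm$. Then
$$
A_Tw_\pm=\mu_\pm w_\pm+L_i(t_\pm)e_i .
$$
Multiply the $\delta$-matrix by $[e_i,w_-,w_+]$ and argue exactly as above. This gives
$$
\delta_{i,T}\det[e_i,w_-,w_+]=L_i(t_-)L_i(t_+)(\mu_+-\mu_-).
$$
A direct expansion gives $\det[e_i,w_-,w_+]=(-1)^{\operatorname{pos}_T(i)-1}(t_+-t_-)$. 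Row $j$ of the eigen-equation gives $\mu_\pm=a_{jj}+a_{jk}t_\pm$, so $\mu_+-\mu_-=a_{jk}(t_+-t_-)$. Since $t_+\neq t_-$, cancelling this factor gives the second formula.

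The only delicate points are bookkeeping. One must track the column-permutation signs $(-1)^{r+1}$ and $(-1)^{\operatorname{pos}_T(i)-1}$ correctly. One must also check that the ``$c_m$'' correction terms are independent of $\ell$, so that they are removable by row operations. I expect the sign conventions to be the main place where care is needed.
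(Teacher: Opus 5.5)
Your proof is correct: both identities check out, including the sign bookkeeping, and your first identity is proved exactly as in the paper.

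For the first identity, you and the paper both multiply the full observability matrix by $(e_r,v_1,v_2,v_3)$ and the local one by $U$, reduce both products to the same Vandermonde determinant, and use $\det(e_r,v_1,v_2,v_3)=(-1)^{r+1}\det U$.

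For the second identity you take a different route. The paper moves coordinate $i$ to the first position and expands $\delta_{i,T}$ along its first row. This gives, up to the sign $(-1)^{\operatorname{pos}_T(i)-1}$, the explicit polynomial $a_{ij}^2a_{jk}+a_{ij}a_{ik}(d_k-d_j)-a_{ik}^2a_{kj}$. It then applies Vieta's formulas to the quadratic $a_{jk}t^2+(d_j-d_k)t-a_{kj}=0$ satisfied by $t_\pm$, which turns this polynomial into $a_{jk}L_i(t_-)L_i(t_+)$.

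You instead reuse the observability device with the basis $(e_i,w_-,w_+)$ and the relation $A_Tw_\pm=\mu_\pm w_\pm+L_i(t_\pm)e_i$. Your evaluations $\det(e_i,w_-,w_+)=(-1)^{\operatorname{pos}_T(i)-1}(t_+-t_-)$ and $\mu_+-\mu_-=a_{jk}(t_+-t_-)$ are right.

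What each route buys:
\begin{itemize}
\item The paper's route is a short explicit computation that also records $\delta_{i,T}$ as a polynomial in the entries.
\item Your route treats both identities uniformly and needs neither the cofactor expansion nor Vieta.
\item Your route also gives strictness for free. The local Vandermonde relation $\delta_{i,T}\det U=\prod_\ell(u_\ell)_i\,W(\lambda)$ shows $\delta_{i,T}\neq0$ on a saturated triple. Your identity then forces $L_i(t_\pm)\neq0$, hence $L_i(t_\pm)>0$, since these are exterior slacks of productions by $jk$.
\end{itemize}
The paper establishes this strictness by a separate eigenvector-extension argument inside the same proof. It is this positivity that the later sign deductions in Lemmas~\ref{lem:rich-path-median} and~\ref{lem:D-gluing} rely on.
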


\begin{proof}
Assume first that the exterior slacks in row $r$ are strict.
Let $U=(u_1,u_2,u_3)$ and set
$$
V(\lambda_1,\lambda_2,\lambda_3)
:=
\det(\lambda_\ell^{\,k-1})_{k,\ell=1}^3.
$$
This is the Vandermonde determinant of the three eigenvalues of $A_T$.
Multiplication of the local observability matrix by $U$ gives
$$
\delta_{i,T}\det U
=
\left(\prod_{\ell=1}^3(u_\ell)_i\right)
V(\lambda_1,\lambda_2,\lambda_3).
$$
Embed the $u_\ell$ in $\R^4$ with zero coordinate in row $r$, and put
$M=(e_r,u_1,u_2,u_3)$. Since
$$
Au_\ell=\lambda_\ell u_\ell+s_\ell e_r,
$$
multiplication of the full observability matrix by $M$, followed by
elementary row operations, gives
$$
D_r\det M
=
\left(\prod_{\ell=1}^3s_\ell\right)
V(\lambda_1,\lambda_2,\lambda_3).
$$
Expansion along the first column gives
$$
\det M=(-1)^{r+1}\det U,
$$
which proves the first identity.

For the second identity, no condition on the exterior row $r$ is
needed. Move coordinate $i$ to the first position and keep $j<k$.
Expansion along the first row gives
$$
(-1)^{\operatorname{pos}_T(i)-1}\delta_{i,T}
=
a_{ij}^2a_{jk}
+a_{ij}a_{ik}(d_k-d_j)
-a_{ik}^2a_{kj}.
$$
The ratios $t_\pm$ satisfy
$$
a_{jk}t^2+(d_j-d_k)t-a_{kj}=0.
$$
Their sum and product turn the right-hand side into
$a_{jk}L_i(t_-)L_i(t_+)$. The two factors $L_i(t_\pm)$ are the
slacks in row $i$ of the productions by $jk$. They are strict. Indeed,
a zero slack would extend the corresponding pair eigenvector, with a
zero coordinate at $i$, to an eigenvector of $A_T$. Since saturation
provides all three distinct eigenvalues of $A_T$, that eigenvalue is
simple and its eigenspace is generated by a positive vector. This is a
contradiction.
\end{proof}

If the full support produces four values, all four observability
determinants have the same sign.

\begin{corollary}
\label{cor:common-D-sign}
If $p_{\Ind_4}(A)=4$, then $D_1,D_2,D_3,D_4$ are nonzero and have the
same sign.
\end{corollary}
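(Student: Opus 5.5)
If $p_{\Ind_4}(A)=4$, the full support produces four distinct real eigenvalues $\lambda_1,\ldots,\lambda_4$ with positive eigenvectors $u_1,\ldots,u_4\in\R^4$. Since $A$ has order four, these are all its eigenvalues, they are simple, and $U=(u_1,u_2,u_3,u_4)$ is invertible. I would imitate the first computation in the proof of Lemma~\ref{lem:determinant-identities}, now with no exterior row. Let $O_r$ be the observability matrix with rows $e_r^\top A^{k-1}$, $k=1,\dots,4$. Because $A^{k-1}u_\ell=\lambda_\ell^{k-1}u_\ell$, the entry $(k,\ell)$ of $O_rU$ is $\lambda_\ell^{k-1}(u_\ell)_r$. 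Taking determinants gives
$$
D_r\det U=\Bigl(\prod_{\ell=1}^4(u_\ell)_r\Bigr)V(\lambda_1,\lambda_2,\lambda_3,\lambda_4),
$$
where $V$ is the Vandermonde determinant of order four. The columns of $U$ must be taken in the same order as the eigenvalues in $V$.

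Next I would check the signs. $V\neq0$ because the eigenvalues are distinct, and $\det U\neq0$ because eigenvectors for distinct eigenvalues are independent. Every factor $(u_\ell)_r$ is strictly positive since each $u_\ell>0$. Hence
$$
D_r=\frac{V}{\det U}\prod_{\ell=1}^4(u_\ell)_r ,
$$
which is nonzero. Its sign equals $\sgn(V/\det U)$, and this quantity does not depend on $r$. So $D_1,\ldots,D_4$ are nonzero and share a common sign.

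There is no real obstacle here. The only point to watch is keeping the column order of $U$ matched with the eigenvalue order in the Vandermonde matrix, so that a single ratio $V/\det U$ serves for all four values of $r$.
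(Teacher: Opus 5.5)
Your proof is correct and is essentially the paper's argument. The paper writes $A=V\Lambda V^{-1}$ with the four positive eigenvectors as the columns of $V$ and reads off $D_r=\det(\lambda_\ell^{\,k-1})\,\bigl(\prod_{\ell}V_{r\ell}\bigr)/\det V$, which is exactly your identity $D_r\det U=\bigl(\prod_{\ell}(u_\ell)_r\bigr)V(\lambda_1,\ldots,\lambda_4)$. As in your version, an $r$-independent nonzero ratio multiplied by a positive product gives the common nonzero sign.
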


\begin{proof}
Write $A=V\Lambda V^{-1}$, where the columns of $V$ are the four
positive eigenvectors produced by the full support. Then
$$
D_r
=
\frac{\det(\lambda_\ell^{\,k-1})_{k,\ell=1}^4}
{\det V}
\prod_{\ell=1}^4V_{r\ell}.
$$
The first two factors are independent of $r$, and the last factor is
positive.
\end{proof}

In a saturated triple, the diagonal entry at the common vertex of two
rich pairs lies between the other two diagonal entries.

\begin{lemma}
\label{lem:rich-path-median}
Let $T=\{i,j,k\}$ be saturated. If $ij$ and $jk$ are rich, then $d_j$
lies strictly between $d_i$ and $d_k$.
\end{lemma}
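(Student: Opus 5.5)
We argue by contradiction, using the sign rules of Lemma~\ref{lem:pair-orientation} together with the three positive eigenvectors that saturation provides. If $ij$ and $jk$ are rich, the lemma gives $a_{ij}a_{ji}<0$ and $a_{jk}a_{kj}<0$. It also gives
$$
\sgn(a_{ji})=\sgn(d_i-d_j),\qquad \sgn(a_{jk})=\sgn(d_k-d_j).
$$
In particular $d_j\neq d_i$ and $d_j\neq d_k$. Suppose $d_j$ is not strictly between $d_i$ and $d_k$. Replacing $A$ by $-A$ and each production by the corresponding one for $-A$ would change exterior signs, so we do not use that symmetry. Instead we treat the two cases $d_j>\max(d_i,d_k)$ and $d_j<\min(d_i,d_k)$ separately, by the same argument.

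\emph{Case $d_j>\max(d_i,d_k)$.} Here $a_{ji}<0$ and $a_{jk}<0$. Let $u>0$ be any of the three eigenvectors produced by $T$, with eigenvalue $\lambda$. Row $j$ of $A_Tu=\lambda u$ gives
$$
(\lambda-d_j)u_j=a_{ji}u_i+a_{jk}u_k<0 .
$$
Hence all three (distinct, real) eigenvalues of $A_T$ lie strictly below $d_j$, so $\chi^A_T(d_j)>0$. Next I would compute $\chi^A_T(d_j)$ directly. The $(j,j)$ entry of $d_j\Id-A_T$ vanishes, and expansion yields
$$
\chi^A_T(d_j)=-(d_j-d_i)a_{jk}a_{kj}-(d_j-d_k)a_{ij}a_{ji}-a_{ij}a_{jk}a_{ki}-a_{ik}a_{ji}a_{kj}.
$$
The first two terms are positive, so the contradiction must come from the last two. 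For this I would use the productions of the rich pairs themselves. The pair $ij$ produces two values $\mu_\pm$ with $(\lambda-d_i)(\lambda-d_j)=a_{ij}a_{ji}<0$, so $\mu_\pm\in(d_i,d_j)$. Their exterior slacks in row $k$ are $a_{ki}+a_{kj}t_\pm\ge0$, and these are strict by the saturation argument in Lemma~\ref{lem:determinant-identities}. Symmetrically, the pair $jk$ gives strict slacks $L_i(t_\pm)=a_{ij}+a_{ik}t_\pm>0$ in row $i$, at values in $(d_k,d_j)$.

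The cleanest route, I expect, is to evaluate $\chi^A_T$ at the pair eigenvalues. If $v$ is the eigenvector of $A_{jk}$ for $\mu$, extended by $0$ at $i$, then $A_Tv=\mu v+s\,e_i$ with $s>0$. A bordered-determinant (Schur complement) computation gives
$$
\chi^A_T(\mu)=\pm\, s\cdot(\text{cofactor involving }a_{ij},a_{ik})\cdot(\text{positive factor}).
$$
The signs of the factors are fixed by $a_{ij}>0$ and by $L_i(t_\pm)>0$. This would show that $\chi^A_T$ has the same sign at both values $\mu_\pm\in(d_k,d_j)$. Combined with the analogous evaluation at the values of the pair $ij$, and with all three roots lying below $d_j$, the sign pattern would be inconsistent with three simple real roots. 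The second case, $d_j<\min(d_i,d_k)$, is the mirror image: all roots lie above $d_j$, and the same evaluations apply with the inequalities reversed.

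The main obstacle is this sign bookkeeping. I must choose the right evaluation points, namely $d_j$ and the four pair eigenvalues, and use the strict slacks so that the sign of $\chi^A_T$ is pinned down at enough points to contradict the presence of three real roots on one side of $d_j$. If the Schur-complement route becomes messy, I would fall back on the identity for $\delta_{j,T}$ from Lemma~\ref{lem:determinant-identities}. It expresses $\delta_{j,T}\det U$ through $\prod_\ell(u_\ell)_j$ and a Vandermonde determinant, and I would try to show that the sign forced by $a_{ji},a_{jk}<0$ is incompatible with the sign coming from the ordering of the eigenvalues.
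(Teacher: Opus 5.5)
Your proposal has a genuine gap at the decisive step. The preliminaries are correct: the sign rules, and, when $d_j>\max(d_i,d_k)$, the bound $(\lambda-d_j)u_j<0$, which places all roots of $\chi^A_T$ below $d_j$. But the contradiction is never actually produced, and the mechanism you sketch does not deliver it.

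Let $v>0$ be the eigenvector of $A_{jk}$ for $\mu$, extended by $0$ at $i$, and put $s=a_{ij}v_j+a_{ik}v_k>0$. The identity $\operatorname{adj}(\mu\Id_3-A_T)(\mu\Id_3-A_T)v=\chi^A_T(\mu)v$ gives
$$
\chi^A_T(\mu)=-\frac{s}{v_j}\bigl(a_{ji}(\mu-d_k)+a_{jk}a_{ki}\bigr).
$$
The slack $s$ absorbs the row entries $a_{ij},a_{ik}$. The remaining factor involves $a_{ki}$, whose sign your data do not fix: the row-$k$ slack $a_{ki}+a_{kj}t>0$ of the pair $ij$, with $a_{kj}>0$, allows either sign. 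So $\sgn\chi^A_T(\mu_\pm)$ is undetermined.

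Even in the subcase $a_{ik},a_{ki}\geq0$, $\chi^A_T$ is positive at $d_j$ and at all four pair eigenvalues. This is compatible with three simple roots below $d_j$, for instance if the four points lie in $(\lambda_3,d_j)$. So evaluating $\chi^A_T$ at these points and counting roots yields no contradiction.

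Your fallback uses the wrong determinant. At the middle vertex, the expansion in Lemma~\ref{lem:determinant-identities} gives, for $i<k$,
$$
(-1)^{\operatorname{pos}_T(j)-1}\delta_{j,T}=a_{ji}^2a_{ik}+a_{ji}a_{jk}(d_k-d_i)-a_{jk}^2a_{ki}.
$$
This does not factor through slacks, because $ik$ is not rich, so its sign is not controlled. Moreover, the identity $\delta_{\ell,T}\det U=\bigl(\prod_m(u_m)_\ell\bigr)V$ does not determine the sign of $V/\det U$. It only shows that this sign is shared by $\delta_{i,T}$, $\delta_{j,T}$ and $\delta_{k,T}$. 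A single observability determinant can therefore never give a contradiction; you have to compare two.

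The missing idea is to use the two endpoints. After relabeling so that $i<j<k$, the second identity of Lemma~\ref{lem:determinant-identities} gives
$$
\delta_{i,T}=a_{jk}L_i(t_-)L_i(t_+),
\qquad
\delta_{k,T}=a_{ij}(a_{ki}+a_{kj}t'_-)(a_{ki}+a_{kj}t'_+).
$$
Here $t'_\pm$ are the eigenvector ratios of the rich pair $ij$. All four slack factors are strictly positive by saturation. Since $\delta_{i,T}$ and $\delta_{k,T}$ have the same sign,
$$
\sgn(d_k-d_j)=\sgn a_{jk}=\sgn a_{ij}=\sgn(d_j-d_i).
$$
This is the lemma, obtained directly and without any case split; it is the paper's proof.
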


\begin{proof}
Relabel the indices as $1,2,3$, with rich pairs $12$ and $23$. The
determinant identity gives
$$
\sgn\delta_{1,T}
=
\sgn a_{23}
=
\sgn(d_3-d_2)
$$
and
$$
\sgn\delta_{3,T}
=
\sgn a_{12}
=
\sgn(d_2-d_1).
$$
The three local observability determinants have the same sign by their
Vandermonde expressions. Hence
$$
(d_3-d_2)(d_2-d_1)>0.
$$
\end{proof}

The singleton and order-three restrictions will be used in a combined
form.

\begin{lemma}
\label{lem:singleton-rich-rules}
Let $i,j,k$ be distinct indices.

\begin{enumerate}[label=\textup{(\roman*)}]
\item If $i$ and $j$ produce, then $ij$ is not rich.
\item If $i$ produces and $ij$ is rich, then $d_i>d_j$.
\item If $i$ produces and $ij,ik$ are rich, then $m_{jk}=0$.
\item If $i,j$ produce and $ik,jk$ are rich, then $m_{ijk}\leq2$.
\item If $i$ produces, the triple $ijk$ is saturated, and $ij,jk$ are
rich, then
$$
d_i>d_j>d_k.
$$
\end{enumerate}
\end{lemma}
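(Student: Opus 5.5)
This lemma collects consequences of results already established, so the plan is to derive each item from an earlier statement.

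\emph{Items (i)--(iv).} These are direct restatements, which I would match up one by one.
\begin{itemize}
\item Item (i) is Corollary~\ref{cor:singleton-independent}.
\item Item (ii) is Corollary~\ref{cor:singleton-orientation}.
\item Items (iii) and (iv) are Proposition~\ref{prop:order-three-restrictions}(ii) and (iii), applied to the triple $T=\{i,j,k\}$.
\end{itemize}
For (iii) and (iv), I would check that ``produces'' for a singleton means $m$-independent production, that is, $p_i(A)\geq1$. The hypotheses in Proposition~\ref{prop:order-three-restrictions} are stated in that same intrinsic language. So no assigned-map subtlety arises, except that $m_{jk}$ and $m_{ijk}$ refer to the fixed $\tau$, exactly as in the cited proposition.

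\emph{Item (v).} This is the only part that combines two earlier results.
\begin{itemize}
\item Since $i$ produces and $ij$ is rich, item (ii) gives $d_i>d_j$.
\item Since $T=\{i,j,k\}$ is saturated and $ij,jk$ are rich, Lemma~\ref{lem:rich-path-median} says that $d_j$ lies strictly between $d_i$ and $d_k$.
\item Combining the two facts forces $d_j>d_k$, hence $d_i>d_j>d_k$.
\end{itemize}

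\emph{Point to verify.} The main step needing care is whether Lemma~\ref{lem:rich-path-median} applies regardless of index ordering. Its proof relabels the indices, and the sign identity $\sgn\delta_{1,T}=\sgn a_{23}$ uses Lemma~\ref{lem:determinant-identities} with its factor $(-1)^{\operatorname{pos}_T(i)-1}$. That factor depends on positions, so after an arbitrary relabeling I would confirm that the betweenness conclusion is invariant. It is: betweenness is a symmetric statement under swapping $i$ and $k$, and the lemma as stated is label-free. I would therefore invoke it directly.
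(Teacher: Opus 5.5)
Your proposal is correct and is exactly the paper's argument: items \textup{(i)}--\textup{(iv)} are Corollaries~\ref{cor:singleton-independent} and~\ref{cor:singleton-orientation} and Proposition~\ref{prop:order-three-restrictions}, and \textup{(v)} combines \textup{(ii)} with Lemma~\ref{lem:rich-path-median}. Your relabeling check is also sound: the hypotheses and the betweenness conclusion of Lemma~\ref{lem:rich-path-median} are invariant under a simultaneous permutation of the indices, which multiplies all three $\delta_{\cdot,T}$ by the same sign.
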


\begin{proof}
The first two assertions are
Corollaries~\ref{cor:singleton-independent}
and~\ref{cor:singleton-orientation}. Assertions \textup{(iii)} and
\textup{(iv)} are Proposition~\ref{prop:order-three-restrictions}.
Finally, \textup{(ii)} gives $d_i>d_j$, and
Lemma~\ref{lem:rich-path-median} places $d_j$ between $d_i$ and $d_k$.
\end{proof}

The observability identities also compare complementary faces.

\begin{lemma}
\label{lem:D-gluing}
Let $T=\Ind_4\setminus\{r\}$ be saturated, assume that its exterior
slacks are strict in row $r$, and suppose that
$T\setminus\{i\}=\{j,k\}$, with $j<k$, is rich. Then
$$
\sgn D_r
=
(-1)^{r+\operatorname{pos}_T(i)}
\sgn(d_k-d_j).
$$
\end{lemma}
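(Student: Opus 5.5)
We combine the two identities of Lemma~\ref{lem:determinant-identities} and read off signs. Since $T$ is saturated and its exterior slacks are strict in row $r$, the first identity gives
$$
\sgn D_r=(-1)^{r+1}\sgn\delta_{i,T}.
$$
The reason is that the factor $\prod_\ell s_\ell/\prod_\ell(u_\ell)_i$ is a quotient of positive numbers: each $s_\ell>0$ by hypothesis, and each $(u_\ell)_i>0$ because the producing eigenvectors are positive.

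Next we apply the second identity to the rich pair $jk=T\setminus\{i\}$:
$$
\delta_{i,T}=(-1)^{\operatorname{pos}_T(i)-1}a_{jk}L_i(t_-)L_i(t_+).
$$
We must determine the sign of $L_i(t_-)L_i(t_+)$. The quantities $L_i(t_\pm)=a_{ij}+a_{ik}t_\pm$ are the slacks in row $i$ of the two productions by the pair $jk$, with eigenvector $(1,t_\pm)$. Because $jk$ is rich, both $t_\pm$ correspond to actual productions, so each slack is $\geq0$. The proof of Lemma~\ref{lem:determinant-identities} shows that these slacks are strict, using saturation of $T$. Hence $L_i(t_-)L_i(t_+)>0$.

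Finally, Lemma~\ref{lem:pair-orientation} gives $\sgn a_{jk}=\sgn(d_k-d_j)$. Substituting, we obtain
$$
\sgn D_r=(-1)^{r+1}(-1)^{\operatorname{pos}_T(i)-1}\sgn(d_k-d_j)=(-1)^{r+\operatorname{pos}_T(i)}\sgn(d_k-d_j).
$$

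The only delicate point is the positivity of the product $L_i(t_-)L_i(t_+)$. Nonnegativity of each factor requires that both ratios $t_\pm$ give genuine productions of $jk$ inside the full matrix, which is exactly what richness means, since $p_{jk}=2$ counts productions with nonnegative exterior slack in every row. Strictness comes from the simplicity argument already contained in the proof of Lemma~\ref{lem:determinant-identities}. Everything else is sign bookkeeping.
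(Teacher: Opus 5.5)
Your proposal is correct and matches the paper's proof: you combine the two identities of Lemma~\ref{lem:determinant-identities}, observe that the slack, eigenvector and $L_i(t_\pm)$ factors are positive (the last by the strictness argument inside that lemma's proof), and use $\sgn a_{jk}=\sgn(d_k-d_j)$ from Lemma~\ref{lem:pair-orientation}. Your explicit bookkeeping $(-1)^{r+1}(-1)^{\operatorname{pos}_T(i)-1}=(-1)^{r+\operatorname{pos}_T(i)}$ simply spells out the step the paper leaves implicit.
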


\begin{proof}
Combine the two identities in
Lemma~\ref{lem:determinant-identities}. All slack and eigenvector
factors are positive, and Lemma~\ref{lem:pair-orientation} gives
$$
\sgn a_{jk}=\sgn(d_k-d_j).
$$
\end{proof}

When the full support produces four values, a rich pair cannot belong
to two saturated triples.

\begin{corollary}
\label{cor:rich-edge-occupancy}
Assume that $p_{\Ind_4}(A)=4$. A rich pair is contained in at most one
saturated triple.
\end{corollary}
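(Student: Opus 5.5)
The plan is to argue by contradiction using Lemma~\ref{lem:D-gluing} twice, once for each of the two saturated triples containing the rich pair, and to compare the resulting signs via Corollary~\ref{cor:common-D-sign}.

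Suppose the rich pair $\{j,k\}$, with $j<k$, lies in two saturated triples. In order four these must be $T=\{i,j,k\}$ and $T'=\{i',j,k\}$, where $\{i,i'\}=\Ind_4\setminus\{j,k\}$. Label so that $i<i'$. Then
$$
T=\Ind_4\setminus\{i'\},
\qquad
T'=\Ind_4\setminus\{i\}.
$$

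First I check the hypothesis of Lemma~\ref{lem:D-gluing}, namely that the exterior slacks of the three productions by $T$ are strict in row $r=i'$.
\begin{enumerate}[label=\textup{(\alph*)}]
\item Each slack is $\geq0$, since $T$ produces.
\item If a slack vanished, then extending the corresponding eigenvector $u_\ell$ by a zero coordinate at $i'$ would give an eigenvector of $A$ with a zero coordinate.
\item Because $p_{\Ind_4}(A)=4$, the matrix $A$ has four distinct eigenvalues, each with a positive eigenvector. Hence every eigenvalue is simple and its eigenspace is spanned by a positive vector.
\item An eigenvector with a zero coordinate is therefore impossible, so every slack in row $i'$ is strict.
\end{enumerate}
The same argument applies to $T'$ in row $i$.

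Next I apply Lemma~\ref{lem:D-gluing} to both triples, and Corollary~\ref{cor:common-D-sign} to say that $D_{i'}$ and $D_i$ are nonzero with the same sign. This gives
$$
(-1)^{i'+\operatorname{pos}_T(i)}\sgn(d_k-d_j)
=
(-1)^{i+\operatorname{pos}_{T'}(i')}\sgn(d_k-d_j).
$$
Here $d_k\neq d_j$ by Lemma~\ref{lem:pair-orientation}, since $\sgn a_{jk}=\sgn(d_k-d_j)$ and $a_{jk}\neq0$. We can therefore cancel $\sgn(d_k-d_j)$ and obtain
$$
i'-i+\operatorname{pos}_T(i)-\operatorname{pos}_{T'}(i')\equiv0\pmod 2.
$$

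The remaining step is a parity count, which I expect to be the only delicate point.
\begin{enumerate}[label=\textup{(\alph*)}]
\item We have $\operatorname{pos}_T(i)=1+|\{j,k\}\cap[1,i)|$ and $\operatorname{pos}_{T'}(i')=1+|\{j,k\}\cap[1,i')|$.
\item Their difference is $|\{j,k\}\cap(i,i')|$.
\item Every integer strictly between $i$ and $i'$ belongs to $\{j,k\}$, since $i$ and $i'$ are the only other indices. So this count equals $i'-i-1$.
\item The left-hand side of the congruence is then $(i'-i)-(i'-i-1)=1$, which is odd.
\end{enumerate}
This contradiction shows that a rich pair is contained in at most one saturated triple. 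To guard against sign slips, I would check the case $\{j,k\}=\{1,2\}$, $\{i,i'\}=\{3,4\}$ directly: there $i'+\operatorname{pos}_T(i)=4+3$ and $i+\operatorname{pos}_{T'}(i')=3+3$, which indeed have different parities.
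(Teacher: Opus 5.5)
Your proof is correct and follows essentially the same route as the paper. Both arguments get strict complementary-row slacks from the simplicity and positivity of the four eigenvectors of $A$, apply Lemma~\ref{lem:D-gluing} to both saturated triples, and reach a parity contradiction with Corollary~\ref{cor:common-D-sign}. The only difference is cosmetic: you count the elements of $\{j,k\}$ lying between $i$ and $i'$, whereas the paper uses $\operatorname{pos}_T(i)=i-\indic_{\{r<i\}}$ to see directly that the exponent sum $2i+2r-1$ is odd.
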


\begin{proof}
Let $jk$ be rich, with $j<k$, and let $i,r$ be the other two indices.
Suppose that
$$
T=\Ind_4\setminus\{r\},
\qquad
T'=\Ind_4\setminus\{i\}
$$
are saturated. Their exterior slacks in the complementary rows are
strict. Indeed, a zero slack would extend a produced triple
eigenvector, with one zero coordinate, to an eigenvector of $A$.
Since $p_{\Ind_4}(A)=4$, the full support produces all four distinct
eigenvalues of $A$; each eigenspace is one-dimensional and generated
by a positive vector.
This is a contradiction.
Lemma~\ref{lem:D-gluing} gives
$$
\sgn D_r
=
(-1)^{r+\operatorname{pos}_T(i)}
\sgn(d_k-d_j)
$$
and
$$
\sgn D_i
=
(-1)^{i+\operatorname{pos}_{T'}(r)}
\sgn(d_k-d_j).
$$
The positions satisfy
$$
\operatorname{pos}_T(i)
=
i-\indic_{\{r<i\}},
\qquad
\operatorname{pos}_{T'}(r)
=
r-\indic_{\{i<r\}}.
$$
The sum of the exponents is odd. Thus $D_i$ and $D_r$ have opposite
signs, contrary to Corollary~\ref{cor:common-D-sign}.
\end{proof}

We also need the following elementary trace inequality.

\begin{lemma}
\label{lem:trace}
Let $T=\{i,j,k\}$ be saturated. If
$$
a_{ij}>0,
\qquad
a_{ik}>0,
$$
then
$$
d_j+d_k>2d_i.
$$
\end{lemma}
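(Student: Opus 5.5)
The plan is to read off the inequality from the trace of $A_T$, after showing that row $i$ forces every eigenvalue of $A_T$ to exceed $d_i$.

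First, unpack saturation. By definition $p_T(A)=3$, so $T$ itself produces three distinct values $\lambda_1,\lambda_2,\lambda_3$. Each $\lambda_\ell$ is a real eigenvalue of the $3\times3$ matrix $A_T$ with an eigenvector $u_\ell\in\R^T$, $u_\ell>0$. Since $A_T$ has order three and these values are distinct, they exhaust $\operatorname{spec}(A_T)$, each with multiplicity one. The exterior condition plays no role here; only positivity of the $u_\ell$ is used.

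Next, write row $i$ of $A_Tu_\ell=\lambda_\ell u_\ell$:
$$
d_i(u_\ell)_i+a_{ij}(u_\ell)_j+a_{ik}(u_\ell)_k=\lambda_\ell(u_\ell)_i .
$$
Because $a_{ij}>0$, $a_{ik}>0$ and all coordinates of $u_\ell$ are positive, the two off-diagonal terms have a strictly positive sum. Dividing by $(u_\ell)_i>0$ gives
$$
\lambda_\ell-d_i=\frac{a_{ij}(u_\ell)_j+a_{ik}(u_\ell)_k}{(u_\ell)_i}>0,
\qquad \ell=1,2,3.
$$

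Finally, sum over $\ell$ and use the trace identity $\lambda_1+\lambda_2+\lambda_3=\operatorname{tr}A_T=d_i+d_j+d_k$. This gives $d_i+d_j+d_k>3d_i$, that is, $d_j+d_k>2d_i$.

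The only point needing care is the justification that the three produced values are the full spectrum of $A_T$ counted with multiplicity, which is what lets the trace be written as their sum. This follows directly from distinctness and the order of $A_T$, so I do not expect a genuine obstacle.
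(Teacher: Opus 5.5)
Your proposal is correct and follows the paper's own argument: row $i$ of the eigenvalue equation, with $a_{ij},a_{ik}>0$ and a positive eigenvector, forces each of the three produced eigenvalues above $d_i$, and summing against $\operatorname{tr}A_T=d_i+d_j+d_k$ gives the inequality. You also spell out the point the paper leaves implicit, namely that three distinct eigenvalues of a $3\times3$ matrix exhaust its spectrum, so the trace really is their sum.
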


\begin{proof}
For every positive eigenvector $u$ of $A_T$,
$$
(\lambda-d_i)u_i
=
a_{ij}u_j+a_{ik}u_k
>0.
$$
All three eigenvalues of $A_T$ are larger than $d_i$. Their sum is the
trace of $A_T$, which gives the inequality.
\end{proof}

\section{The regular Pareto capacity}
\label{sec:regular-capacity}

Corollary~\ref{cor:regular-to-25} shows that the regular upper bound
reduces to excluding a principally simple matrix with $25$ Pareto
values, nonzero off-diagonal entries, and pairwise distinct diagonal
entries. All its productions are regular.

The argument has a short combinatorial outline:
$$
\text{$25$ values}
\ \Longrightarrow\
\text{nine aggregate profiles}
\ \Longrightarrow\
\text{ten detailed profiles}
\ \Longrightarrow\
\text{a contradiction}.
$$

Fix such a matrix and its unique assigned support map, and put
$$
s:=S_1,\qquad q:=m_{1234}.
$$
Principal simplicity gives a unique producing support for every value.
Consequently,
$$
m_J=p_J(A)
\qquad
\text{for every }J\in\Supp_4.
$$
Let $\mathcal V$ be the set of producing singleton indices, and let
$\mathcal R$ and $\mathcal Z$ be the sets of pairs satisfying,
respectively, $p_{ij}(A)=2$ and $p_{ij}(A)=0$. Write
$$
r:=|\mathcal R|,
\qquad
z:=|\mathcal Z|.
$$
We also write
$$
\mathcal U:=\{T\subseteq\Ind_4:|T|=3,\ p_T(A)<3\}
$$
for the set of nonsaturated triples.
Every remaining pair produces one value, and therefore
$$
S_2=6+r-z.
$$
The graph $G_2(A)$ is triangle-free. Its possible types with at least
two edges on four vertices will be used below:
$$
\begin{array}{c|c}
r&\text{types}\\
\hline
2&P_3,\ 2K_2\\
3&P_4,\ K_{1,3}\\
4&C_4
\end{array}
$$
Here $P_k$, $C_k$, and $K_{1,k}$ denote, respectively, a path, a
cycle, and a star; $2K_2$ denotes two disjoint edges.
For $r=2$, the two edges are adjacent or disjoint. For $r=3$, a
triangle-free graph is a star if it has a vertex of degree three, and is
$P_4$ otherwise. For $r=4$, the only triangle-free type is $C_4$.

\subsection{Aggregate profiles}

We first show that at most two singleton supports produce.

\begin{lemma}
\label{lem:S1-at-most-two}
Every principally simple profile of total $25$ satisfies
$$
S_1\leq2.
$$
\end{lemma}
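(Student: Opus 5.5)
The plan is to assume $S_1\geq3$ and bound every term in the decomposition
$$
25=|\Pi(A)|=S_1+S_2+S_3+q
$$
directly. The tools are the crude bounds $m_J=p_J(A)\leq|J|$, the identity $S_2=6+r-z\leq6+r$, and the singleton rules of Lemma~\ref{lem:singleton-rich-rules}. I expect every case to give a total of at most $24$, which is the desired contradiction.

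\emph{Case $S_1=4$.} All four singletons produce. By Lemma~\ref{lem:singleton-rich-rules}\textup{(i)}, no pair is rich, so $r=0$ and $S_2\leq6$. By Proposition~\ref{prop:order-three-restrictions}\textup{(i)}, every triple has $m_T\leq1$, so $S_3\leq4$. Together with $q\leq4$ this gives
$$
|\Pi(A)|\leq4+6+4+4=18<25.
$$

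\emph{Case $S_1=3$.} After relabeling, $\mathcal V=\{1,2,3\}$. By Lemma~\ref{lem:singleton-rich-rules}\textup{(i)}, the pairs inside $\{1,2,3\}$ are not rich. Hence every rich pair has the form $i4$ with $i\leq3$, so $r\leq3$ and $G_2(A)$ is a substar of $K_{1,3}$ centred at $4$. Proposition~\ref{prop:order-three-restrictions}\textup{(i)} gives $m_{123}\leq1$. The key extra input is Lemma~\ref{lem:singleton-rich-rules}\textup{(iv)}: whenever $i4$ and $j4$ are both rich with $i,j\in\mathcal V$, we get $m_{ij4}\leq2$. I then split according to $r$.
\begin{enumerate}[label=\textup{(\alph*)}]
\item If $r=3$, the three triples $124,134,234$ each have $m_T\leq2$. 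Thus $S_3\leq1+6=7$ and $S_2\leq9$, so $|\Pi(A)|\leq3+9+7+4=23$.
\item If $r=2$, say with rich pairs $14,24$, then $m_{124}\leq2$. Thus $S_3\leq1+2+3+3=9$ and $S_2\leq8$, so $|\Pi(A)|\leq3+8+9+4=24$.
\item If $r\leq1$, then $S_2\leq7$ and $S_3\leq1+9=10$, so $|\Pi(A)|\leq3+7+10+4=24$.
\end{enumerate}
Each case contradicts $|\Pi(A)|=25$, so $S_1\leq2$.

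The only step needing care is the case $r=2$ or $r=3$ with $S_1=3$. There the naive count reaches $26$, and the saving must come from Proposition~\ref{prop:order-three-restrictions}\textup{(iii)}, through Lemma~\ref{lem:singleton-rich-rules}\textup{(iv)}, applied on the triples through vertex $4$. If this saving turned out to be insufficient in some subcase, I would additionally use the face bound $N_T\leq9$ and the list of full-capacity profiles in Proposition~\ref{prop:order-three-restrictions}. The counts above suggest this is not needed.
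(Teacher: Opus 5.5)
Your proof is correct and is essentially the paper's argument: for $S_1=3$ you use the same bounds $S_2\leq 6+r_v$, $S_3\leq 10-\binom{r_v}{2}$ and $q\leq4$, which give at most $24$. The only difference is the case $S_1=4$, where the paper uses strict singleton slacks and Perron--Frobenius on a shifted positive matrix to get the bound $15$, while your use of Proposition~\ref{prop:order-three-restrictions}\textup{(i)} together with the independence of producing singletons gives $18$, which is equally sufficient.
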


\begin{proof}
Suppose first that $s=4$. By the consequence of principal simplicity
stated after Definition~\ref{def:principally-simple}, the exterior
slack of every producing singleton is strict. Hence
$$
a_{ji}>0
\qquad
(i\neq j).
$$
After adding a sufficiently large scalar matrix, every principal
submatrix is positive. If a support $J$ produces a value through
$u>0$, then $u$ is a positive eigenvector of the shifted positive
matrix on $J$. By the Perron-Frobenius theorem
\cite[Chapter~8]{HornJohnson2013}, its eigenvalue is the spectral
radius. Thus every support produces at most one value and the total is
at most
$$
|\Supp_4|=15.
$$

Suppose now that $s=3$, and let $v$ be the only nonproducing singleton.
Among the three pairs contained in $\mathcal V$, each produces at most
one value. Let $r_v$ be the number of rich pairs joining $v$ to
$\mathcal V$. Then
$$
S_2\leq6+r_v.
$$
Start from $S_3\leq4\cdot3=12$. The triple formed by the three
producing singletons produces at most one value, and therefore gives a
deficit of at least two. Moreover, every unordered pair of rich edges
$vi,vj$ determines a distinct triple $vij$. By
Lemma~\ref{lem:singleton-rich-rules}\textup{(iv)}, this triple produces
at most two values. These $\binom{r_v}{2}$ triples are distinct from
the triple formed by the producing singletons. Hence
$$
S_3
\leq
12-2-\binom{r_v}{2}
=
10-\binom{r_v}{2}.
$$
Since $q\leq4$,
$$
25
\leq
3+(6+r_v)+\left(10-\binom{r_v}{2}\right)+4
=
23+r_v-\binom{r_v}{2}.
$$
For $r_v=0,1,2,3$, the last member is, respectively,
$23,24,24,23$. It cannot reach $25$, a contradiction.
\end{proof}

We next prove that the full support produces at least three values.

\begin{lemma}
\label{lem:q-not-two}
Every principally simple profile of total $25$ satisfies
$$
q\geq3.
$$
\end{lemma}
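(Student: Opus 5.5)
The plan is to show that $q\leq 2$ forces every other aggregate bound to be attained with equality, and then to rule out that extremal configuration with the orientation rules.

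\textbf{Step 1: reduce to $q=2$ with all bounds tight.} We have
$$
25=s+S_2+S_3+q,\qquad s\leq2\ \text{(Lemma~\ref{lem:S1-at-most-two})},\qquad S_2\leq9\ \text{(Lemma~\ref{lem:S2-bound})},\qquad S_3\leq 4\cdot3=12 .
$$
These give $q\geq2$. If $q=2$, every bound is an equality: $s=2$, $S_2=9$ and $S_3=12$. So every triple is saturated, $\mathcal U=\varnothing$, and $r-z=3$. Call the two producing singletons $i$ and $j$.

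\textbf{Step 2: combinatorial restrictions on $G_2(A)$.} By Lemma~\ref{lem:singleton-rich-rules}\textup{(i)}, $ij$ is not rich. Every triple is saturated, so $m_{ijk}=3$ for each $k$. Lemma~\ref{lem:singleton-rich-rules}\textup{(iv)} then forbids any $k$ with $ik$ and $jk$ both rich. Hence $i$ and $j$ are nonadjacent in $G_2(A)$ and have no common neighbour.

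Since $r\leq4$, the equation $r-z=3$ leaves two cases.
\begin{itemize}[label=--]
\item $(r,z)=(4,1)$: the graph is $C_4$. Its two nonadjacent vertices are opposite and share a neighbour, which is impossible.
\item $(r,z)=(3,0)$: for $K_{1,3}$, any two nonadjacent vertices are leaves sharing the centre, also impossible. So $G_2(A)$ is a path $P_4$, say $a-b-c-d$. The only nonadjacent pair without a common neighbour is $\{a,d\}$, so $\{i,j\}=\{a,d\}$.
\end{itemize}
A direct count on the triples $\{a,b,d\}$ and $\{a,c,d\}$ gives $N_T=9$ and local profiles that do appear in the classification of Proposition~\ref{prop:order-three-restrictions}. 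So this case is not excluded by counting alone, and it is the main obstacle.

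\textbf{Step 3: exclude the path by diagonal orientation.} I will use Lemma~\ref{lem:singleton-rich-rules}\textup{(v)} twice.
\begin{itemize}[label=--]
\item $a$ produces, $ab$ and $bc$ are rich, and $\{a,b,c\}$ is saturated. This gives $d_a>d_b>d_c$.
\item $d$ produces, $dc$ and $cb$ are rich, and $\{b,c,d\}$ is saturated. This gives $d_d>d_c>d_b$.
\end{itemize}
The two chains contradict each other, since the first gives $d_b>d_c$ and the second gives $d_c>d_b$. This excludes $q=2$.

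Together with $q\geq2$ from Step 1, this proves $q\geq3$.
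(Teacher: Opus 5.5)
Your proof is correct. Steps 1 and 3 coincide with the paper's proof: the same equality case $(S_1,S_2,S_3,q)=(2,9,12,2)$, and the same final contradiction from two applications of Lemma~\ref{lem:singleton-rich-rules}\textup{(v)} along a rich $P_4$ whose endpoints are the producing singletons. The paper's path is $1$-$3$-$4$-$2$.

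The difference is in how you reach that configuration. The paper double counts
$$
\sum_{|T|=3}N_T=3S_1+2S_2+S_3=36
$$
to force $N_T=9$ on all four faces. It then reads off the rich pairs face by face from the order-three full-capacity classification in Proposition~\ref{prop:order-three-restrictions}. Solving the resulting small system of incidence relations gives $\mathcal R=\{13,24,34\}$.

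You instead combine $S_2=6+r-z$ with the list of triangle-free graphs on four vertices and the two forbidden configurations of Lemma~\ref{lem:singleton-rich-rules}\textup{(i)} and \textup{(iv)}.

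What each route buys:
\begin{itemize}
\item Your route uses only the local restriction \textup{(iv)}, not the full classification of order-three full-capacity profiles. It follows the same graph-type case analysis the paper uses later in Proposition~\ref{prop:q-three-classification}.
\item The paper's route avoids the case split over graph types. It obtains the rich set, and facts such as $m_{12}=1$, directly from the face profiles.
\end{itemize}

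Your side remark is also accurate: counting alone cannot exclude the path configuration. In that configuration all four faces have $N_T=9$ with admissible profiles. This is why both arguments need the diagonal orientation at the end.
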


\begin{proof}
The bounds
$$
s\leq2,\qquad S_2\leq9,\qquad S_3\leq12
$$
already give $q\geq2$. Suppose that $q=2$. Then
$$
25
=
S_1+S_2+S_3+q
\leq
2+9+12+2
=
25,
$$
so equality holds in every term:
$$
(S_1,S_2,S_3,q)=(2,9,12,2).
$$
For a triple $T$, let $N_T$ be the total number of productions supported
in $T$. Double counting gives
$$
\sum_{|T|=3}N_T=3S_1+2S_2+S_3=36.
$$
Since $N_T\leq9$, all four faces have $N_T=9$.

Relabel the two producing singleton vertices as $1,2$. Since
$S_3=12$ and every triple produces at most three values,
$$
m_{123}=m_{124}=m_{134}=m_{234}=3.
$$
On the face $123$, there are two producing singletons and the
three-point support produces three values. Among the full-capacity
profiles in Proposition~\ref{prop:order-three-restrictions}, only
$$
(1,1,0;\,1,2,1;\,3)
$$
can occur, up to exchanging the two producing singleton vertices.
Consequently,
$$
m_{12}=1,
\qquad
|\mathcal R\cap\{13,23\}|=1.
$$
The same profile on $124$ gives
$$
|\mathcal R\cap\{14,24\}|=1.
$$

On the face $134$, there is one producing singleton and
$m_{134}=3$. The applicable profile is
$$
(1,0,0;\,1,2,2;\,3),
$$
up to exchanging $3$ and $4$. Hence $34$ is rich and exactly one of
$13,14$ is rich, so
$$
|\mathcal R\cap\{13,14\}|=1.
$$
The same profile on $234$ gives
$$
|\mathcal R\cap\{23,24\}|=1.
$$
For clarity, put
$$
x_{ij}:=\indic_{\{ij\in\mathcal R\}}.
$$
The four relations are
$$
x_{13}+x_{23}=1,
\qquad
x_{14}+x_{24}=1,
\qquad
x_{13}+x_{14}=1,
\qquad
x_{23}+x_{24}=1.
$$
Their two solutions are exchanged by the permutation
$3\leftrightarrow4$. Since $34$ is rich, we may assume
$$
\mathcal R=\{13,24,34\}.
$$
The saturated triple $134$, with singleton $1$ and rich path
$1$-$3$-$4$, gives
$$
d_1>d_3>d_4.
$$
The saturated triple $234$, with singleton $2$ and rich path
$2$-$4$-$3$, gives
$$
d_2>d_4>d_3.
$$
The inequalities $d_3>d_4$ and $d_4>d_3$ are incompatible.
\end{proof}

We now determine the possible aggregate profiles.

\begin{proposition}
\label{prop:nine-aggregate-profiles}
If a principally simple profile has total $25$, then its aggregate
profile belongs to the following list:
$$
\begin{array}{c|c}
q& (S_1,S_2,S_3,q)\\
\hline
3&
(1,9,12,3),\
(2,8,12,3),\
(2,9,11,3)
\\
4&
(0,9,12,4),\
(1,8,12,4),\
(1,9,11,4),\\
&
(2,7,12,4),\
(2,8,11,4),\
(2,9,10,4)
\end{array}
$$
\end{proposition}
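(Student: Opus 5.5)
We would obtain the list by pure enumeration from the bounds already established. The constraints are
$$
S_1+S_2+S_3+q=25,\qquad S_1\leq2,\qquad S_2\leq9,\qquad S_3\leq12,\qquad 3\leq q\leq4.
$$
Here $S_1\leq2$ is Lemma~\ref{lem:S1-at-most-two}, $S_2\leq9$ is Lemma~\ref{lem:S2-bound}, and $S_3\leq12$ holds because each of the four triples produces at most three values. The bound $q\leq4$ holds because $A$ has four eigenvalues, and $q\geq3$ is Lemma~\ref{lem:q-not-two}.

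Setting $\delta:=(2-S_1)+(9-S_2)+(12-S_3)$, the total identity reads
$$
\delta=23+q-25=q-2.
$$
So $\delta$ is a sum of three nonnegative deficits equal to $1$ when $q=3$ and to $2$ when $q=4$. We then list all ways of distributing these deficits, keeping the constraint that the singleton deficit is at most $2$.

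For $q=3$ the deficit is $1$ and falls on exactly one coordinate. This yields $(1,9,12,3)$, $(2,8,12,3)$ and $(2,9,11,3)$.

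For $q=4$ the deficit is $2$. It either falls entirely on one coordinate, giving $(0,9,12,4)$, $(2,7,12,4)$ and $(2,9,10,4)$. Or it is split $1+1$ over two coordinates, giving $(1,8,12,4)$, $(1,9,11,4)$ and $(2,8,11,4)$. These are exactly the nine profiles in the table.

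The main content lies in the previously proved lemmas, so no real obstacle remains. The only point to check is that the step "$q\geq3$" is exactly what excludes deficit $0$, i.e.\ the case $q=2$ with $(2,9,12,2)$. That case is ruled out by Lemma~\ref{lem:q-not-two}, and no other bound is needed. Since $S_1\leq2$ is used as the upper cap, $S_1=0$ is allowed only with deficit $2$ on the singletons, and this is consistent with the table.
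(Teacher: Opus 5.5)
Your proof is correct and is essentially the paper's argument. The paper sets $\alpha=9-S_2$, $\beta=12-S_3$, $\gamma=4-q$ and enumerates the nonnegative solutions of $\alpha+\beta+\gamma=S_1$ with $0\le S_1\le 2$ and $\gamma\in\{0,1\}$. That is the same enumeration as your deficit equation $(2-S_1)+(9-S_2)+(12-S_3)=q-2$, and it rests on the same lemmas ($S_1\le2$, $S_2\le9$, and $q\ge3$).
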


\begin{proof}
Set
$$
\alpha:=9-S_2,\qquad
\beta:=12-S_3,\qquad
\gamma:=4-q.
$$
The bounds on $S_2$, $S_3$, and $q$ show that these three integers are
nonnegative.
The total-$25$ identity is equivalent to
$$
\alpha+\beta+\gamma=s.
$$
Lemmas~\ref{lem:S1-at-most-two} and~\ref{lem:q-not-two} give
$0\leq s\leq2$ and $\gamma\in\{0,1\}$. Listing the nonnegative solutions
of this equation gives exactly the nine rows above.
\end{proof}

To state the two detailed classifications with fixed labels, we first
record the ten candidate profiles. None of them has a zero pair. Every
pair outside $\mathcal R$ has multiplicity one, every triple outside
$\mathcal U$ has multiplicity three, and every triple in
$\mathcal U$ has multiplicity two. Hence the data in the table
determine every multiplicity $m_J$.

\begin{table}[ht]
\centering
\small
\caption{The ten candidate profiles of total $25$.}
\label{tab:ten-profiles}
\begin{tabular}{ccccc}
\toprule
Profile & $(S_1,S_2,S_3,q)$ & $\mathcal V$ & $\mathcal R$
& $\mathcal U$\\
\midrule
$\mathcal P_1$ & $(1,9,12,3)$ & $4$ & $13,23,24$ & $\varnothing$\\
$\mathcal P_2$ & $(2,8,12,3)$ & $3,4$ & $14,23$ & $\varnothing$\\
$\mathcal P_3$ & $(2,8,12,3)$ & $3,4$ & $12,24$ & $\varnothing$\\
$\mathcal P_4$ & $(2,9,11,3)$ & $3,4$ & $12,23,24$ & $234$\\
$\mathcal P_5$ & $(2,9,11,3)$ & $3,4$ & $12,14,23$ & $123$\\
\addlinespace
$\mathcal P_6$ & $(2,8,11,4)$ & $3,4$ & $23,24$ & $234$\\
$\mathcal P_7$ & $(2,8,11,4)$ & $3,4$ & $12,24$ & $124$\\
$\mathcal P_8$ & $(2,9,10,4)$ & $3,4$ & $12,23,24$ & $123,234$\\
$\mathcal P_9$ & $(2,9,10,4)$ & $3,4$ & $12,14,23$ & $123,124$\\
$\mathcal P_{10}$ & $(2,9,10,4)$ & $3,4$ & $12,14,23$ & $123,134$\\
\bottomrule
\end{tabular}
\end{table}

We classify the detailed profiles separately according to whether
$q=3$ or $q=4$.

\subsection{Detailed profiles with three full-support values}

The three aggregate profiles with $q=3$ have only five detailed
possibilities.

\begin{proposition}
\label{prop:q-three-classification}
Up to a simultaneous permutation of the indices, a principally simple
profile of total $25$ with $q=3$ is one of
$$
\mathcal P_1,\quad
\mathcal P_2,\quad
\mathcal P_3,\quad
\mathcal P_4,\quad
\mathcal P_5
$$
listed in Table~\ref{tab:ten-profiles}.
\end{proposition}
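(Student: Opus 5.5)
The plan is to handle the three aggregate profiles with $q=3$ from Proposition~\ref{prop:nine-aggregate-profiles} one at a time: $(1,9,12,3)$, $(2,8,12,3)$ and $(2,9,11,3)$. In each case I would enumerate the admissible pairs $(\mathcal V,\mathcal R,\mathcal Z,\mathcal U)$ and eliminate everything except the listed profiles. Since $m_J=p_J(A)$, the data are constrained in four ways:
\begin{itemize}
\item $S_2=6+r-z$ and $S_3=12-|\mathcal U|$, where a triple in $\mathcal U$ has $m_T\leq2$;
\item $G_2(A)$ is triangle-free, so its type is read off the small table above;
\item $\mathcal V$ is independent in $G_2(A)$;
\item the double count $\sum_{|T|=3}N_T=3S_1+2S_2+S_3$ holds, together with $N_T\leq9$.
\end{itemize}

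First, I would fix $\mathcal V$ up to permutation: $\mathcal V=\{4\}$ when $S_1=1$, and $\mathcal V=\{3,4\}$ when $S_1=2$. The value of $S_2$ then gives $r-z$, namely $3$, $2$, $3$ in the three cases. Since $r\leq4$ by triangle-freeness, this leaves few options:
\begin{itemize}
\item $(r,z)=(3,0)$ or $(4,1)$ when $r-z=3$;
\item $(r,z)=(2,0)$, $(3,1)$ or $(4,2)$ when $r-z=2$.
\end{itemize}
Options with $z>0$ or with $C_4$ would be attacked directly. Lemma~\ref{lem:singleton-rich-rules}(iii) forces a zero pair opposite a producing singleton with two rich neighbours, which conflicts with the count. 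The argument in the proof of Lemma~\ref{lem:S2-bound}, applied at the vertex of maximal diagonal entry, also forces a zero pair. I expect these checks to show that $z=0$ throughout, matching the table, where no candidate has a zero pair.

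Next comes the face analysis. When $S_3=12$ every triple is saturated, and $\sum N_T$ equals $33$ or $34$. Hence at least one or two faces satisfy $N_T=9$, and the face profile on such a face must be one of the three profiles in Proposition~\ref{prop:order-three-restrictions}. These profiles pin down which of the pairs in that face are rich and how singletons sit relative to them, exactly as in the proof of Lemma~\ref{lem:q-not-two}. For each surviving graph type ($P_3$, $2K_2$, $P_4$, $K_{1,3}$) and each placement of $\mathcal V$ compatible with independence, I would combine two orientation constraints on the diagonal:
\begin{itemize}
\item Lemma~\ref{lem:singleton-rich-rules}(ii) and (v) give strict chains $d_i>d_j>d_k$ along rich paths issuing from producing singletons in saturated triples;
\item Lemma~\ref{lem:rich-path-median} gives betweenness conditions on saturated triples containing a rich path.
\end{itemize}
These should produce cyclic contradictions, as in $d_3>d_4>d_3$ in Lemma~\ref{lem:q-not-two}, for all configurations except the stated ones. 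Where orientations alone do not suffice, I would use Lemma~\ref{lem:singleton-rich-rules}(iv) to bound $m_T$, or Lemma~\ref{lem:trace} together with the positive entries $a_{ji}>0$ in the column of a producing singleton. For $(2,9,11,3)$, the unique nonsaturated triple is also located this way: it must be a triple where (iv) applies or where a saturation hypothesis would yield a diagonal contradiction. This should give $\mathcal U=\{234\}$ or $\{123\}$, as in $\mathcal P_4$ and $\mathcal P_5$.

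The main obstacle is the bookkeeping in the $S_1=2$ cases. There the graph types $P_4$ and $K_{1,3}$, combined with the two producing singletons and the choice of the nonsaturated triple, produce many labelled configurations. Several of them survive the pure sign and orientation rules and are removed only by a specific face profile with $N_T=9$. I would organize this step by first fixing the position of $\{3,4\}$ relative to the rich edges: neither, one, or both endpoints of an edge touch $\mathcal V$. I would then apply the full-capacity face classification to the faces forced to have $N_T=9$ by the double count. Only after that would I invoke the diagonal-order lemmas, checking at the end that the survivors coincide with $\mathcal P_1,\ldots,\mathcal P_5$ up to simultaneous relabeling.
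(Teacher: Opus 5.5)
\textbf{There is a genuine gap: your plan to dispose of $C_4$ and to obtain $z=0$ before the face analysis fails.}

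Lemma~\ref{lem:singleton-rich-rules}(iii) and the maximal-diagonal argument from the proof of Lemma~\ref{lem:S2-bound} only \emph{force} zero pairs, so they give lower bounds on $z$. They contradict $S_2=6+r-z$ only when more zero pairs are forced than the count allows. They can never certify $z=0$. In the cases that matter there is no conflict:
\begin{itemize}
\item In $(1,9,12,3)$ with $(r,z)=(4,1)$, let the cycle be $i$-$j$-$k$-$l$ with singleton $i$. Then (ii) gives $d_i>d_j,d_l$, so the maximal diagonal is at $i$ or $k$. Both arguments force the same zero pair $jl$, which is exactly the one the count permits.
\item In $(2,9,11,3)$ with $(r,z)=(4,1)$ the situation is identical. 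The two opposite singletons force only the pair of the other two vertices to be zero.
\item In $(2,8,12,3)$ with $(r,z)=(3,1)$, take a path with endpoint singletons. No zero pair is forced at all: no singleton has degree two, and the maximal diagonal sits at an endpoint.
\end{itemize}
Your face analysis then runs only over $P_3,2K_2,P_4,K_{1,3}$ under $z=0$. So these configurations are never excluded by anything in the proposal.

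\textbf{The fix is to exclude graph types by arguments that do not depend on $z$, and only then read $z$ off $S_2=6+r-z$.} This is what the paper does.
\begin{itemize}
\item \emph{Case $(1,9,12,3)$.} Every triple is saturated, so a singleton of degree at least two is impossible. Lemma~\ref{lem:rich-path-median} places $d_i$ between its two rich neighbours, while (ii) places it above both. This removes $C_4$ and a star centred at the singleton. If the singleton is a leaf of $K_{1,3}$, apply (v) on the two faces through the leaf and the centre, and the median rule on the face formed by the centre and the other two leaves; these are incompatible.
\item \emph{Case $(2,8,12,3)$.} Use (iv): a common rich neighbour of the two singletons yields a nonsaturated triple. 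This removes $C_4$, the star, and the path with singletons at distance two. For the path $1$-$2$-$3$-$4$ with singletons $1,4$, (v) on $123$ and $234$ gives both $d_2>d_3$ and $d_3>d_2$. Hence $r=2$, and then $z=0$ follows from the count.
\item \emph{Case $(2,9,11,3)$.} In $C_4$ the two opposite singletons have two common rich neighbours. By (iv) this gives two nonsaturated triples, while only one is allowed.
\end{itemize}

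With these replacements your outline goes through. The appeal to faces with $N_T=9$ and the order-three full-capacity classification is then unnecessary. Contrary to your expectation, no configuration survives the orientation rules, (iii)--(iv), and the median rule. The paper's proof uses only these.
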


\begin{proof}
Consider first $(S_1,S_2,S_3,q)=(1,9,12,3)$. The identity
$S_2=6+r-z$ gives either
$(r,z)=(3,0)$ or $(r,z)=(4,1)$. Since every triple is saturated, a
singleton cannot be a vertex of degree at least two in $G_2(A)$:
its diagonal entry would be larger than those of two rich neighbors by
Lemma~\ref{lem:singleton-rich-rules}\textup{(ii)}, but would lie between
them by Lemma~\ref{lem:rich-path-median}. This excludes $C_4$ and the
centre of $K_{1,3}$. If the singleton is a leaf of $K_{1,3}$, denote the
centre by $c$ and the other two leaves by $j,k$. The two faces containing
the singleton and $c$ give $d_c>d_j,d_k$, whereas the face $cjk$ places
$d_c$ between $d_j$ and $d_k$. Thus the star is also impossible. Hence
$G_2(A)$ is $P_4$, with the singleton at an endpoint. This gives
$\mathcal P_1$.

Consider next $(2,8,12,3)$. Here
$$
(r,z)\in\{(2,0),(3,1),(4,2)\}.
$$
The cycle $C_4$ is excluded by the same median argument. If $r=3$, a
star forces the triple formed by its centre and the two singleton leaves
to be nonsaturated, by
Lemma~\ref{lem:singleton-rich-rules}\textup{(iv)}. For a path
$1$-$2$-$3$-$4$, the singleton pair is independent. It is therefore
either the endpoint pair $\{1,4\}$ or, up to reversal, the distance-two
pair $\{1,3\}$. The latter pair has the common rich neighbour $2$ and
again forces a nonsaturated triple. In the endpoint case,
$$
d_1>d_2>d_3
$$
follows from the saturated face $123$, while the saturated face $234$
gives $d_4>d_3>d_2$. This is impossible. Hence $r=2$ and $z=0$. If the
$G_2(A)$ is $2K_2$, the two singletons select one endpoint of each
edge, which gives $\mathcal P_2$. If it is $P_3$ with one isolated
vertex, its
middle vertex cannot be a singleton by
Lemma~\ref{lem:singleton-rich-rules}\textup{(iii)}, and its two endpoints
cannot both be singletons by
Lemma~\ref{lem:singleton-rich-rules}\textup{(iv)}. Thus one singleton is
an endpoint and the other is isolated. This gives $\mathcal P_3$.

It remains to consider $(2,9,11,3)$. There is exactly one nonsaturated
triple, and it produces two values. The identity $S_2=6+r-z$ gives
$(r,z)=(3,0)$ or $(4,1)$. In a $C_4$, the two singleton vertices are
opposite. Each of the two remaining vertices is their common rich
neighbour, so two different triples would be nonsaturated. This excludes
$C_4$. For $K_{1,3}$, the singletons are two leaves, and their face with
the centre is the unique nonsaturated triple. This is
$\mathcal P_4$. For $P_4$,
the singletons cannot be at distance two because the internal singleton
would force a zero pair while $z=0$. They are therefore the two
endpoints. The two three-vertex subpaths cannot both be saturated, by the
opposite inequalities obtained above. Exactly one of them is the
nonsaturated triple, and reversal of the path identifies the two
choices. This is $\mathcal P_5$.
\end{proof}

\subsection{Detailed profiles with four full-support values}

The next lemma excludes four aggregate profiles.

\begin{lemma}
\label{lem:q-four-first-removal}
No total-$25$ profile with $q=4$ has aggregate profile
$$
(0,9,12,4),\quad
(1,8,12,4),\quad
(2,7,12,4),\quad
(1,9,11,4).
$$
\end{lemma}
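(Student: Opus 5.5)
The plan is to derive all four exclusions from Corollary~\ref{cor:rich-edge-occupancy}. Since $q=m_{1234}=p_{\Ind_4}(A)=4$ in each case, that corollary applies: every rich pair lies in at most one saturated triple. On the other hand, a pair $jk$ lies in exactly two triples, namely $jki$ and $jkr$, where $\{i,r\}=\Ind_4\setminus\{j,k\}$. So each rich pair must lie in at least one nonsaturated triple, that is, in a member of $\mathcal U$. I will combine this with the counting identity $S_2=6+r-z$ and with the relation between $S_3$ and $\mathcal U$.

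First I relate $S_3$ to $\mathcal U$. We have $m_T=p_T(A)\leq 3$ for every triple $T$, and every nonsaturated triple has $m_T\leq2$. Hence $S_3\leq 12-|\mathcal U|$.
\begin{itemize}
\item If $S_3=12$, then $\mathcal U=\varnothing$.
\item If $S_3=11$, then $|\mathcal U|\leq1$, and in fact $|\mathcal U|=1$ exactly.
\end{itemize}

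Next I find $r$ from $S_2$. In the three profiles $(0,9,12,4)$, $(1,8,12,4)$ and $(2,7,12,4)$ we have $r-z=S_2-6\in\{3,2,1\}$. Since $z\geq0$, this gives $r\geq1$, so at least one rich pair exists. But $\mathcal U=\varnothing$, so that rich pair lies in two saturated triples. This contradicts Corollary~\ref{cor:rich-edge-occupancy}, and these three profiles are excluded at once.

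For $(1,9,11,4)$ we have $r-z=3$, hence $r\geq3$, and $\mathcal U=\{U\}$ is a single triple. Every rich pair must then be contained in $U$. However, $U$ contains only three pairs, and by Lemma~\ref{lem:triangle-free} at most two of them are rich, since otherwise $G_2(A)$ would contain a triangle. Therefore $r\leq2$, which contradicts $r\geq3$.

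The only point that needs care is the hypothesis of Corollary~\ref{cor:rich-edge-occupancy}, namely that the full support produces four values. This holds because principal simplicity gives $m_J=p_J(A)$, so $q=4$ means $p_{\Ind_4}(A)=4$. Strictness of the slacks is handled inside the corollary's own proof, so no further obstacle is expected.
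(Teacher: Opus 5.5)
Your proof is correct and follows the paper's argument essentially verbatim. In the three profiles with $S_3=12$, every triple is saturated and $r\geq1$, which contradicts Corollary~\ref{cor:rich-edge-occupancy}. For $(1,9,11,4)$, all rich pairs are forced into the unique nonsaturated triple, where triangle-freeness allows at most two of them, against $r\geq3$. Your explicit checks that $S_3\leq12-|\mathcal U|$ and that $q=4$ gives $p_{\Ind_4}(A)=4$ via $m_J=p_J(A)$ spell out steps the paper leaves implicit.
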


\begin{proof}
In each of the first three profiles every triple is saturated, while
$S_2=6+r-z$ gives $r\geq1$. Every rich edge is then contained in two
saturated triples, contrary to Corollary~\ref{cor:rich-edge-occupancy}.

For $(1,9,11,4)$, there is one nonsaturated triple. Every rich edge must
be contained in this triple, again by
Corollary~\ref{cor:rich-edge-occupancy}. A triangle-free graph on its three
vertices has at most two edges, whereas $S_2=6+r-z$ gives $r\geq3$.
\end{proof}

The two remaining aggregate profiles lead to five candidates.

\begin{proposition}
\label{prop:q-four-classification}
Up to a simultaneous permutation of the indices, a principally simple
profile of total $25$ with $q=4$ is one of
$$
\mathcal P_6,\quad
\mathcal P_7,\quad
\mathcal P_8,\quad
\mathcal P_9,\quad
\mathcal P_{10}
$$
listed in Table~\ref{tab:ten-profiles}.
\end{proposition}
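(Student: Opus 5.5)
The plan is to treat the two aggregate profiles left by Lemma~\ref{lem:q-four-first-removal}, namely $(2,8,11,4)$ and $(2,9,10,4)$ from Proposition~\ref{prop:nine-aggregate-profiles}. Corollary~\ref{cor:rich-edge-occupancy} drives the argument. Since $q=4$, every rich pair lies in at most one saturated triple. Each pair lies in exactly two triples, so every rich edge is contained in some triple of $\mathcal U$. We then combine the identity $S_2=6+r-z$ with Lemma~\ref{lem:singleton-rich-rules} and with triangle-freeness to pin down $\mathcal R$, $\mathcal V$ and $\mathcal U$.

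\emph{Profile $(2,8,11,4)$.} Here $|\mathcal U|=1$, since the deficit is $12-S_3=1$; the unique triple $U$ in $\mathcal U$ produces two values. All rich edges lie inside $U$, and a triangle-free graph on three vertices has at most two edges. Together with $r-z=2$, this forces $r=2$, $z=0$, and $G_2(A)$ is a path $a$--$c$--$b$ inside $U$, with a fourth vertex $w$. By Lemma~\ref{lem:singleton-rich-rules}\textup{(iii)}, a singleton at $c$ would create a zero pair, which is excluded by $z=0$. So the two independent singletons form either $\{a,b\}$ or $\{a,w\}$, up to exchanging $a$ and $b$. After relabeling the singletons as $3,4$, the first case gives $\mathcal R=\{23,24\}$ and $\mathcal U=\{234\}$, which is $\mathcal P_6$. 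The second gives $\mathcal R=\{12,24\}$ and $\mathcal U=\{124\}$, which is $\mathcal P_7$.

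\emph{Profile $(2,9,10,4)$.} The total deficit on triples is $2$, and $r-z=3$ gives $r\geq3$. If only one triple were nonsaturated, with one value, then all $r\geq3$ rich edges would lie in it, which is impossible. Hence $\mathcal U=\{U_1,U_2\}$, each producing two values. Write $U_1=e\cup\{x\}$ and $U_2=e\cup\{y\}$ with a common pair $e=\{e_1,e_2\}$. The rich edges lie among the five pairs other than $xy$.

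If $r=4$, the graph is the $C_4$ $x$--$e_1$--$y$--$e_2$. The singletons form an opposite pair. If they are $\{x,y\}$, the saturated triple $xye_1$ together with Lemma~\ref{lem:singleton-rich-rules}\textup{(v)} applied from $x$ and from $y$ gives the contradiction $d_x>d_{e_1}>d_y>d_{e_1}>d_x$. If they are $\{e_1,e_2\}$, the same contradiction arises from the saturated triple $e_1e_2x$.

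Thus $r=3$ and $z=0$, and $G_2(A)$ is a $P_4$ or a $K_{1,3}$ on these five pairs. We then list the independent singleton pairs. Rule \textup{(iii)} forbids a singleton with two rich neighbours, since $z=0$. Rule \textup{(iv)} forces the triple formed by two singletons and a common rich neighbour into $\mathcal U$. Rule \textup{(v)} and Lemma~\ref{lem:rich-path-median} exclude saturated triples that give opposite diagonal orders, exactly as in Proposition~\ref{prop:q-three-classification}. The survivors, after relabeling the singletons as $3,4$, should be $\mathcal P_8$ (the star with centre $2$), and $\mathcal P_9$ and $\mathcal P_{10}$ (the path $4$--$1$--$2$--$3$, with the two admissible choices of the second nonsaturated triple).

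The main obstacle is this last enumeration for $r=3$. It has several placements of $\{x,y\}$ relative to the path or star, and in each we must check that exactly the stated $\mathcal U$ is forced and that the remaining triples are saturated without contradiction. We would organize it by the position of $e$: first inside the graph as an edge, then as a nonedge. For each placement we would apply the rules in the order \textup{(iii)}, \textup{(iv)}, \textup{(v)}.
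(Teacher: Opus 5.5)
Your route is the paper's. You use the same split into $(2,8,11,4)$ and $(2,9,10,4)$, with Corollary~\ref{cor:rich-edge-occupancy}, triangle-freeness and $S_2=6+r-z$ confining the rich edges, and your excluded pair $xy$ is exactly the paper's $ab\notin\mathcal R$. You handle the first aggregate profile correctly.

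The second profile has two defects. The first is in the $C_4$ subcase with singletons $\{e_1,e_2\}$. The triple $e_1e_2x$ that you call saturated is $U_1\in\mathcal U$; rule~(iv) even forces this, since $x$ is a common rich neighbour of the two singletons. So nothing can be deduced from its saturation. Moreover, in the genuinely saturated triples $xye_1$ and $xye_2$, the singleton $e_1$ is the \emph{middle} vertex of the rich path $x$--$e_1$--$y$. Rule~(v) therefore does not apply, and this is not ``the same contradiction''. The argument that works is the paper's: Lemma~\ref{lem:rich-path-median} on $xye_1$ places $d_{e_1}$ strictly between $d_x$ and $d_y$, whereas Lemma~\ref{lem:singleton-rich-rules}(ii) gives $d_{e_1}>d_x$ and $d_{e_1}>d_y$.

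The more serious omission is the case $r=3$, which is precisely where $\mathcal P_8,\mathcal P_9,\mathcal P_{10}$ arise. You only announce the enumeration and say the survivors ``should be'' these profiles, so the classification is not proved. In your parametrization the case is short, because occupancy is already encoded by $xy\notin\mathcal R$.

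\emph{Star.} A star cannot be centred at $x$ or $y$, so its centre is, say, $e_1$. By rule~(iii) and $z=0$ the singletons are leaves. By rule~(iv) their face with $e_1$ lies in $\mathcal U$, so they are $e_2$ together with $x$ or $y$; the pair $\{x,y\}$ is excluded because it would make the saturated face $xye_1$ nonsaturated. This gives $\mathcal P_8$.

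\emph{Path.} The singletons must be the two endpoints, again by rule~(iii) and $z=0$. Since $x,y$ are not adjacent, there are two possibilities. Either $x,y$ are the endpoints, giving the path $x$--$e_1$--$e_2$--$y$ with $\mathcal U$ consisting of the two three-vertex subpaths, i.e.\ $\mathcal P_9$. Or $x,y$ are at distance two, giving the path $x$--$e_1$--$y$--$e_2$ up to $e_1\leftrightarrow e_2$ and $x\leftrightarrow y$, with singletons $x,e_2$, i.e.\ $\mathcal P_{10}$.

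The paper's separate exclusions (the three-leaves face for the star, the face $124$, and both three-vertex subpaths saturated) are automatic in your setup once $xy\notin\mathcal R$.
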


\begin{proof}
Suppose first that the aggregate profile is $(2,8,11,4)$. There is one
nonsaturated triple $U$. Every rich edge lies in $U$, by
Corollary~\ref{cor:rich-edge-occupancy}. Thus $r\leq2$. The identity
$S_2=6+r-z$ gives
$r=2$ and $z=0$, and the two rich edges form a path inside $U$. Label
this path $1$-$2$-$3$, with $4\notin U$. The singleton pair cannot
contain two adjacent vertices. The choice $\{2,4\}$ is excluded because
the middle singleton would force $m_{13}=0$. The endpoint pair
$\{1,3\}$ gives $\mathcal P_6$, and an endpoint together with $4$
gives $\mathcal P_7$. These are the only choices, up to reversing the
path.

Suppose now that the aggregate profile is $(2,9,10,4)$. There are at
least two nonsaturated triples. Indeed, if there were only one, every
rich edge would lie in it. However, $S_2=6+r-z$ gives $r\geq3$,
whereas a triangle-free graph on the three vertices of this triple has
at most two edges. Hence
there are exactly two nonsaturated triples, and each produces two values.
Write them as
$$
U=\Ind_4\setminus\{a\},
\qquad
V=\Ind_4\setminus\{b\}.
$$
The only pair contained in neither $U$ nor $V$ is $ab$. Consequently
$$
ab\notin\mathcal R.
$$

The identity $S_2=6+r-z$ gives $(r,z)=(3,0)$ or $(4,1)$. If $r=4$, the graph $G_2(A)$ is
$C_4$, and the singleton vertices are opposite. Their two common rich
neighbours force the two corresponding triples to be $U$ and $V$. Each
remaining, saturated triple contains one singleton and its two rich
neighbours. The median rule and singleton orientation are incompatible
on either triple. Thus $r\neq4$.

Let $r=3$ and $z=0$. If $G_2(A)$ is a star, the singleton vertices
are two leaves. Their face with the centre is nonsaturated. The other
nonsaturated face cannot be the face formed by the three leaves, because
then the rich edge joining the centre to the remaining leaf would be
contained in neither nonsaturated face, contrary to
Corollary~\ref{cor:rich-edge-occupancy}. The two other choices are equivalent
by exchanging the singleton leaves, and give $\mathcal P_8$.

If $G_2(A)$ is a path, the singletons are its endpoints. Indeed, a
singleton at an internal vertex would force a zero pair. Label the path
$1$-$2$-$3$-$4$, with singletons $1,4$. At least one of $123$ and
$234$ is nonsaturated, since their simultaneous saturation gives both
$d_2>d_3$ and $d_3>d_2$. By reversing the path, take $123$ to be
nonsaturated. The second nonsaturated face can be $234$ or $134$.
The remaining choice $124$ is excluded by
Corollary~\ref{cor:rich-edge-occupancy}, since the two omitted vertices would
form the rich pair $34$. The first two choices give, respectively,
$\mathcal P_9$ and $\mathcal P_{10}$.
\end{proof}

\subsection{Elimination of the ten profiles}

The next lemma excludes $\mathcal P_6$ and $\mathcal P_{10}$ by
comparing the signs of the determinants $D_r$.

\begin{lemma}
\label{lem:remove-observability-profiles}
The profiles $\mathcal P_6$ and $\mathcal P_{10}$ are impossible.
\end{lemma}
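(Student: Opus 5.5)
The plan is to use only sign information: Corollary~\ref{cor:common-D-sign}, Lemma~\ref{lem:D-gluing}, and the singleton orientation rule of Lemma~\ref{lem:singleton-rich-rules}\textup{(ii)}. Both $\mathcal P_6$ and $\mathcal P_{10}$ have $q=4$. Since $m_J=p_J(A)$ for a principally simple matrix, this gives $p_{\Ind_4}(A)=4$, so $D_1,\ldots,D_4$ are nonzero and share a common sign. Moreover, for every saturated triple $T=\Ind_4\setminus\{r\}$, the exterior slacks in row $r$ are strict. This follows exactly as in the proof of Corollary~\ref{cor:rich-edge-occupancy}: a zero slack would give an eigenvector of $A$ with a zero coordinate, contradicting the fact that every eigenspace of $A$ is spanned by a positive vector. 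Hence Lemma~\ref{lem:D-gluing} applies to every rich pair lying in a saturated triple. In each profile I will find two such configurations that give opposite signs for two of the $D_r$.

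For $\mathcal P_6$, we have $\mathcal V=\{3,4\}$, $\mathcal R=\{23,24\}$ and $\mathcal U=\{234\}$, so the triples $123$, $124$ and $134$ are saturated. First take the rich pair $23$ inside $T=123$, with $r=4$, $i=1$ and $\operatorname{pos}_T(1)=1$. Lemma~\ref{lem:D-gluing} gives $\sgn D_4=-\sgn(d_3-d_2)$. Next take the rich pair $24$ inside $T=124$, with $r=3$, $i=1$ and $\operatorname{pos}_T(1)=1$; this gives $\sgn D_3=\sgn(d_4-d_2)$. Since $3$ and $4$ produce, while $23$ and $24$ are rich, the orientation rule gives $d_3>d_2$ and $d_4>d_2$. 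Hence $D_4<0<D_3$, a contradiction.

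For $\mathcal P_{10}$, we have $\mathcal V=\{3,4\}$, $\mathcal R=\{12,14,23\}$ and $\mathcal U=\{123,134\}$, so the triples $124$ and $234$ are saturated. Take the rich pair $14$ inside $T=124$, with $r=3$, $i=2$ and $\operatorname{pos}_T(2)=2$. This gives $\sgn D_3=-\sgn(d_4-d_1)$. Since $4$ produces and $14$ is rich, $d_4>d_1$, and therefore $D_3<0$. Next take the rich pair $23$ inside $T=234$, with $r=1$, $i=4$ and $\operatorname{pos}_T(4)=3$. This gives $\sgn D_1=\sgn(d_3-d_2)$. Since $3$ produces and $23$ is rich, $d_3>d_2$, and therefore $D_1>0$. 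Thus $D_1$ and $D_3$ have opposite signs, again contradicting Corollary~\ref{cor:common-D-sign}.

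As a consistency check, the pair $12$ in the triple $124$ gives $\sgn D_3=\sgn(d_2-d_1)$, which forces $d_2<d_1$. This agrees with the median rule on the rich path $2$-$1$-$4$ and is not needed for the argument. The only delicate step is the exponent bookkeeping $(-1)^{r+\operatorname{pos}_T(i)}$ with the convention $j<k$. I will recheck each of these sign computations carefully, because an error of a single parity would erase the contradiction.
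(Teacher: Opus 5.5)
Your proposal is correct and is essentially the paper's proof. For $\mathcal P_6$ you use the saturated faces $123$ and $124$ with complementary rich pairs $23$ and $24$ to get $D_4<0<D_3$, and for $\mathcal P_{10}$ the faces $124$ and $234$ with rich pairs $14$ and $23$ to get $D_3<0<D_1$, exactly as the paper does, contradicting Corollary~\ref{cor:common-D-sign}. The differences are cosmetic: you read the sign through $\sgn(d_k-d_j)$ and the singleton orientation rule, whereas the paper takes $\sgn a_{jk}>0$ directly from singleton production, which is equivalent by Lemma~\ref{lem:pair-orientation}. Your explicit justification that the row-$r$ slacks are strict is correct but already automatic from principal simplicity.
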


\begin{proof}
Consider $\mathcal P_6$. The producing singleton $3$ gives
$a_{23}>0$, and
the producing singleton $4$ gives $a_{24}>0$. The face $123$ is
saturated, and the complementary rich pair $23$ gives
$$
\sgn D_4
=
(-1)^{4+\operatorname{pos}_{123}(1)}\sgn a_{23}
=-1.
$$
The face $124$ is saturated, and the complementary rich pair $24$ gives
$$
\sgn D_3
=
(-1)^{3+\operatorname{pos}_{124}(1)}\sgn a_{24}
=1.
$$
This contradicts the common sign of $D_1,D_2,D_3,D_4$.

Consider $\mathcal P_{10}$. The producing singletons give
$$
a_{14}>0,\qquad a_{23}>0.
$$
The saturated face $124$, with complementary rich pair $14$, gives
$$
\sgn D_3
=
(-1)^{3+\operatorname{pos}_{124}(2)}\sgn a_{14}
=-1.
$$
The saturated face $234$, with complementary rich pair $23$, gives
$$
\sgn D_1
=
(-1)^{1+\operatorname{pos}_{234}(4)}\sgn a_{23}
=1.
$$
This is the same contradiction.
\end{proof}

The Perron-Frobenius theorem excludes $\mathcal P_4$ and
$\mathcal P_8$.

\begin{lemma}
\label{lem:remove-positive-submatrix-profiles}
The profiles $\mathcal P_4$ and $\mathcal P_8$ are impossible.
\end{lemma}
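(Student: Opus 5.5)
The plan is to treat $\mathcal P_4$ and $\mathcal P_8$ together. In Table~\ref{tab:ten-profiles} they have the same singleton set $\mathcal V=\{3,4\}$ and the same rich graph $\mathcal R=\{12,23,24\}$, a star centred at $2$. In both profiles the triple $134$ is saturated: $\mathcal U=\{234\}$ for $\mathcal P_4$ and $\mathcal U=\{123,234\}$ for $\mathcal P_8$. I will show that every off-diagonal entry of $A_{134}$ is positive. The Perron-Frobenius theorem then contradicts $p_{134}(A)=3$.

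\emph{Step 1: signs forced by the singletons.} The matrix is principally simple, so the productions by the singletons $3$ and $4$ have strict exterior slacks. Hence
$$
a_{j3}>0\quad(j\neq3),\qquad a_{j4}>0\quad(j\neq4).
$$
In particular $a_{13},a_{14},a_{34},a_{43}>0$. Next, $23$ is rich and $a_{23}>0$, so Lemma~\ref{lem:pair-orientation} gives $a_{32}<0$, since $a_{23}a_{32}<0$. In the same way, $24$ is rich and $a_{24}>0$, so $a_{42}<0$.

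\emph{Step 2: signs forced by the rich pair $12$.} The pair $12$ produces two values through eigenvectors $(1,t_\pm)^\top$ with $t_\pm>0$. Their exterior slacks in rows $3$ and $4$ are strict:
$$
a_{31}+a_{32}t_\pm>0,\qquad a_{41}+a_{42}t_\pm>0.
$$
Since $a_{32}<0$ and $a_{42}<0$, these inequalities force $a_{31}>0$ and $a_{41}>0$. Therefore all six off-diagonal entries of $A_{134}$ are positive.

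\emph{Step 3: Perron-Frobenius.} Add a large multiple of the identity so that $A_{134}$ becomes a positive matrix; this does not change its eigenvectors. Any value produced by the support $134$ comes from a positive eigenvector. By the Perron-Frobenius theorem \cite[Chapter~8]{HornJohnson2013}, a positive eigenvector of a positive matrix belongs to the spectral radius, and that eigenvalue is simple. Hence $p_{134}(A)\le1$. This is the same argument as in the case $s=4$ of Lemma~\ref{lem:S1-at-most-two}. It contradicts the saturation $p_{134}(A)=3$, which holds in both profiles.

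The step needing the most care is Step 2, since the signs of $a_{31}$ and $a_{41}$ are not given by any singleton. They have to be extracted from the two strict slacks of the rich pair $12$, using the negative entries $a_{32},a_{42}$ obtained in Step 1 from rich-pair orientation.
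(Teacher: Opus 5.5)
Your proposal is correct and follows the paper's proof essentially step by step. Singleton production and rich-pair orientation give $a_{32},a_{42}<0$, the strict row-$3$ and row-$4$ slacks of the rich pair $12$ then force $a_{31},a_{41}>0$, and Perron--Frobenius applied to the shifted positive matrix $A_{134}$ contradicts $p_{134}(A)=3$.
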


\begin{proof}
Both profiles have producing singletons $3,4$, rich pairs $12,23,24$,
and a saturated triple $134$. Singleton production gives
$$
a_{13},a_{23},a_{43}>0,
\qquad
a_{14},a_{24},a_{34}>0.
$$
Richness of $23$ and $24$ then gives
$$
a_{32}<0,\qquad a_{42}<0.
$$
The exterior slacks of the producing pair $12$ in rows $3$ and $4$ are
strictly positive. Since the coefficients of the second coordinate are
negative, they force
$$
a_{31}>0,\qquad a_{41}>0.
$$
All six off-diagonal entries of $A_{134}$ are positive. After adding a
scalar matrix, $A_{134}$ is a positive matrix. Perron-Frobenius permits
only one eigenvalue with a positive eigenvector, whereas $m_{134}=3$.
This is impossible.
\end{proof}

Six profiles remain:
$$
\mathcal P_1,\quad
\mathcal P_2,\quad
\mathcal P_3,\quad
\mathcal P_5,\quad
\mathcal P_7,\quad
\mathcal P_9.
$$
They can be excluded directly.

\begin{proposition}
\label{prop:no-total-25}
No principally simple matrix of order $4$ , with nonzero off-diagonal
entries and pairwise distinct diagonal entries, has a support profile
of total $25$.
\end{proposition}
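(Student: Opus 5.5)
By Propositions~\ref{prop:q-three-classification} and~\ref{prop:q-four-classification}, together with Lemmas~\ref{lem:remove-observability-profiles} and~\ref{lem:remove-positive-submatrix-profiles}, it remains to exclude the six profiles $\mathcal P_1,\mathcal P_2,\mathcal P_3,\mathcal P_5,\mathcal P_7,\mathcal P_9$. For each one I will read off from Table~\ref{tab:ten-profiles} the following data: the producing singletons, the rich pairs, and the saturated triples. From these I collect sign information. Singleton production gives strict positivity of the column entries $a_{ji}$ for $i\in\mathcal V$. Lemma~\ref{lem:pair-orientation} gives the signs of $a_{ij},a_{ji}$ on rich pairs. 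Lemma~\ref{lem:singleton-rich-rules}(ii),(v) and Lemma~\ref{lem:rich-path-median} give orderings of the diagonal entries on saturated faces. The aim in each case is a contradiction produced by one of three mechanisms already used: incompatible diagonal orderings, opposite signs of the $D_r$, or a positive principal submatrix of order three that is forced to produce three values.

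For $q=4$ (profiles $\mathcal P_7,\mathcal P_9$) I will use Lemma~\ref{lem:D-gluing}. This applies to every saturated face $T=\Ind_4\setminus\{r\}$ that contains a rich pair not through the vertex $i\in T$. Its slacks in row $r$ are strict by the argument of Corollary~\ref{cor:rich-edge-occupancy}. For $\mathcal P_9$ the saturated faces are $134$ and $234$, and the rich pairs are $14\subset134$ and $23\subset234$. These give $\sgn D_2$ and $\sgn D_1$ through $\sgn(d_4-d_1)=\sgn a_{14}>0$ and $\sgn a_{23}>0$, which parallels $\mathcal P_{10}$; I expect a parity mismatch. For $\mathcal P_7$ the only rich pairs $12,24$ lie in the nonsaturated triple $124$, so the $D$-gluing lemma gives nothing. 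Instead I will combine singleton signs ($a_{j3}>0$, $a_{j4}>0$) with the saturated faces $134$ and $234$ and the exterior slack of the rich pair productions to force positivity of the off-diagonal part of $A_{134}$ or $A_{234}$. Perron--Frobenius then contradicts saturation, as in Lemma~\ref{lem:remove-positive-submatrix-profiles}. If this fails, I will use the trace inequality Lemma~\ref{lem:trace} against the diagonal ordering.

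For $q=3$ (profiles $\mathcal P_1,\mathcal P_2,\mathcal P_3,\mathcal P_5$) every face or almost every face is saturated, and Corollary~\ref{cor:common-D-sign} is not available. Here I plan to use the first identity of Lemma~\ref{lem:determinant-identities} in a modified form. Since $q=3$, the fourth eigenvalue of $A$ has no positive eigenvector, but the three produced ones do. Writing $D_r$ through a Vandermonde factor, $\det V$ and $\prod_\ell V_{r\ell}$, the sign of $D_r$ equals a common sign times $\sgn V_{r4}$, where $V_{\cdot4}$ is the fourth eigenvector. Combining this with the signs of $D_r$ from Lemma~\ref{lem:D-gluing} on saturated faces determines the sign pattern of the fourth eigenvector. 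This is consistent only if that eigenvector has mixed signs. I will then check, case by case, whether the forced sign pattern is all of one sign, which would contradict $q=3$ (the fourth value would be produced by the full support), or contradicts the orientation orderings. For $\mathcal P_1$ the ordering from the path $4$-$2$-$3$-$1$ with singleton $4$ on saturated faces gives $d_4>d_2>d_3>d_1$; I will combine it with Lemma~\ref{lem:trace} on faces where the singleton $4$ forces $a_{j4}>0$.

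The main obstacle is the $q=3$ cases, especially $\mathcal P_2$ and $\mathcal P_3$, which contain few rich edges and therefore little sign information. There I expect to need an extra determinant identity, for example comparing $\delta_{i,T}$ for two vertices of one saturated face via their common Vandermonde sign, as in Lemma~\ref{lem:rich-path-median}, to generate enough constraints.
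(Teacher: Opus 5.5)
Your reduction to the six profiles is correct, but the mechanisms you propose leave five of them open. For $\mathcal P_9$, Lemma~\ref{lem:D-gluing} on the face $134$ (rich pair $14$, $i=3$, $r=2$) gives $\sgn D_2=(-1)^{2+2}\sgn(d_4-d_1)=1$. On $234$ (rich pair $23$, $i=4$, $r=1$) it gives $\sgn D_1=(-1)^{1+3}\sgn(d_3-d_2)=1$. The signs agree, so the parity mismatch you expect from the analogy with $\mathcal P_{10}$ does not occur.

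For $q=3$, your fourth-eigenvector observation is correct: principal simplicity makes every coordinate of that eigenvector nonzero, and $q=3$ makes its signs mixed. But it yields no contradiction. The same computation gives $(\sgn D_1,\ldots,\sgn D_4)=(-,-,+,+)$ for $\mathcal P_1$ (with $d_1<d_3<d_2<d_4$) and $(+,+,-,-)$ for $\mathcal P_2$. It gives $D_1<0<D_3$ and $D_4<0$ for $\mathcal P_3$, and $D_1,D_2>0>D_3$ for $\mathcal P_5$. All of these are compatible with a mixed-sign eigenvector.

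Your $\mathcal P_1$ variant also fails. The trace inequalities available through the column entries $a_{j4}>0$ are $d_3+d_4>2d_1$ (row $1$ of $134$) and $d_2+d_4>2d_3$ (row $3$ of $234$), and both are consistent with that ordering. For $\mathcal P_7$, by contrast, your claim that Lemma~\ref{lem:D-gluing} gives nothing is wrong: $12\subset123$ and $24\subset234$ lie in saturated faces, and the lemma gives $d_1<d_2$. Your Perron--Frobenius and trace fallback there can nevertheless be completed.

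The missing idea is to extract row signs from the exterior slacks of the producing pair $12$ (and of $23$ in $\mathcal P_1$) in the rows of the singleton vertices. Rich-pair orientation makes one coefficient of such a slack negative, so strict positivity forces the other coefficient to be positive. This supplies exactly the hypotheses of Lemma~\ref{lem:trace} in the saturated faces $134$ and $234$.

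\begin{itemize}
\item In $\mathcal P_2,\mathcal P_5,\mathcal P_9$, the signs $a_{32}<0$ and $a_{41}<0$ force $a_{31}>0$ and $a_{42}>0$. With $a_{34},a_{43}>0$, the trace inequalities give $d_1+d_4>2d_3$ and $d_2+d_3>2d_4$. Combined with $d_1<d_4$ and $d_2<d_3$, they give $d_3<d_4<d_3$.
\item In $\mathcal P_1$, $a_{42}<0$ and the row-$4$ slack of $23$ give $a_{43}>0$. If $a_{41}>0$, Lemma~\ref{lem:trace} in $134$ gives $d_1+d_3>2d_4$, which is impossible because $d_4$ is maximal. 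If $a_{41}<0$, the row-$4$ slack of $12$ is negative.
\item In $\mathcal P_3$ and $\mathcal P_7$, the same device gives $a_{41}>0$. Perron--Frobenius on $A_{134}$ then gives $a_{31}<0$, hence $a_{32}>0$. The trace inequalities $d_1+d_3>2d_4$ and $d_2+d_4>2d_3$ then conflict with $d_1,d_2<d_4$.
\end{itemize}

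No observability determinant is needed in the $q=3$ cases. This slack-plus-trace argument is what replaces your hoped-for extra determinant identity.
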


\begin{proof}
The preceding lemmas leave six cases.

\smallskip
\noindent\emph{Profile $\mathcal P_1$.}
The rich pairs $13,23$ in the saturated triple $123$ place $d_3$
strictly between $d_1$ and $d_2$. Applied in $234$ to singleton $4$
and the rich path $4$-$2$-$3$,
Lemma~\ref{lem:singleton-rich-rules}\textup{(v)} gives $d_2>d_3$.
Singleton $4$ and the rich pair $24$ give $d_4>d_2$. Hence
$$
d_1<d_3<d_2<d_4.
$$
Richness of $24$ gives $a_{42}<0$. The exterior slack of the rich pair
$23$ in row $4$ then gives $a_{43}>0$.

Suppose that $a_{41}\geq0$. For every positive eigenvector $u$ of
$A_{134}$,
$$
(\lambda-d_4)u_4
=
a_{41}u_1+a_{43}u_3
>0.
$$
All three produced eigenvalues of $A_{134}$ are larger than $d_4$.
Their sum is $d_1+d_3+d_4$, so
$$
d_1+d_3>2d_4,
$$
which is impossible because $d_1,d_3<d_4$. Thus $a_{41}<0$. The
producing pair $12$ now has exterior slack
$$
a_{41}u_1+a_{42}u_2<0
$$
in row $4$, a contradiction.

\smallskip
\noindent\emph{Profiles $\mathcal P_2,\mathcal P_5,\mathcal P_9$.}
Singletons $3,4$ and rich pairs $23,14$ give
$$
d_2<d_3,
\qquad
d_1<d_4,
\qquad
a_{32}<0,
\qquad
a_{41}<0.
$$
In all three profiles, pair $12$ produces and the triples $134$ and
$234$ are saturated. Its exterior slacks force
$$
a_{31}>0,
\qquad
a_{42}>0.
$$
Singleton production gives $a_{34}>0$ and $a_{43}>0$. The trace
inequalities in $134$ and $234$ give
$$
d_1+d_4>2d_3,
\qquad
d_2+d_3>2d_4.
$$
The first gives $d_3<d_4$, and the second gives $d_4<d_3$. This is
impossible.

\smallskip
\noindent\emph{Profiles $\mathcal P_3,\mathcal P_7$.}
For $\mathcal P_3$, apply
Lemma~\ref{lem:singleton-rich-rules}\textup{(v)} in the saturated
triple $124$ to singleton $4$ and the rich path
$4$-$2$-$1$. It gives $d_1<d_2$. For $\mathcal P_7$, the saturated
triple $234$ and the complementary rich pair $24$ give
$$
\sgn D_1=-\sgn(d_4-d_2)<0,
$$
while the saturated triple $123$ and the complementary rich pair $12$
give
$$
\sgn D_4=-\sgn(d_2-d_1).
$$
Since $q=p_{\Ind_4}(A)=4$, the common sign of the observability
determinants again gives $d_1<d_2$.

In both profiles, singleton $4$ and richness of $24$ now give
$$
d_1<d_2<d_4,
\qquad
a_{42}<0.
$$
Richness of $12$ gives $a_{12}>0$, and the row-$4$ slack of pair $12$
forces $a_{41}>0$. Singleton production gives
$$
a_{13},a_{14},a_{34},a_{43}>0.
$$
If $a_{31}\geq0$, then $a_{31}>0$ by the nonzero off-diagonal
assumption. After a scalar shift, $A_{134}$ is positive, contrary to
$m_{134}=3$ and the Perron-Frobenius theorem. Hence $a_{31}<0$, and
the row-$3$ slack of pair $12$ gives $a_{32}>0$.

The trace inequalities in $134$ and $234$ give
$$
d_1+d_3>2d_4,
\qquad
d_2+d_4>2d_3.
$$
Since $d_1,d_2<d_4$, the first gives $d_3>d_4$, whereas the second
gives $d_3<d_4$. This is impossible.
\end{proof}

The regular capacity now follows from the lower example and the
preceding exclusion.

\begin{theorem}
\label{thm:regular-capacity}
The regular Pareto capacity in order $4$  is
$$
c_4^{\mathrm{reg}}=23.
$$
\end{theorem}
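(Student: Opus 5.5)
The statement has two halves, and I will prove them separately. The lower bound $c_4^{\mathrm{reg}}\geq 23$ is exactly Proposition~\ref{prop:lower-bound}. That proposition exhibits $A_\star$ with $23$ distinct regular Pareto eigenvalues, certified by the exact computation of Remark~\ref{rem:maple-certificate}. Nothing further is needed there.

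For the upper bound, I will argue by contradiction. Suppose some $A\in\R^{4\times4}$ satisfies $|\Pi^{\mathrm{reg}}(A)|\geq 24$.

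Corollary~\ref{cor:regular-to-25} applies directly. It is built from Corollary~\ref{cor:regular-principal-perturbation}, the parity result of Corollary~\ref{cor:principally-simple-parity}, and the general bound $26$. It produces a principally simple matrix $B$ with three properties:
\begin{itemize}
\item $B$ has nonzero off-diagonal entries;
\item $B$ has pairwise distinct diagonal entries;
\item $|\Pi(B)|=25$ exactly.
\end{itemize}
Because $B$ is principally simple, its assigned support map is unique and $m_J=p_J(B)$ for every $J$. So $B$ has a support profile of total $25$ in the sense of Section~\ref{sec:regular-capacity}.

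Proposition~\ref{prop:no-total-25} says that no such matrix exists, which is the contradiction. That proposition collects the whole case analysis:
\begin{itemize}
\item the nine aggregate profiles of Proposition~\ref{prop:nine-aggregate-profiles};
\item their reduction to the ten detailed profiles $\mathcal P_1,\dots,\mathcal P_{10}$ in Propositions~\ref{prop:q-three-classification} and~\ref{prop:q-four-classification};
\item the elimination of those ten profiles in Lemmas~\ref{lem:remove-observability-profiles}, \ref{lem:remove-positive-submatrix-profiles} and the final case analysis.
\end{itemize}
Hence $|\Pi^{\mathrm{reg}}(A)|\leq 23$ for every $A$, so $c_4^{\mathrm{reg}}\leq 23$. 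Combined with the lower bound, $c_4^{\mathrm{reg}}=23$.

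Two details in the assembly need checking; neither is a real obstacle. First, the definition of $c_4^{\mathrm{reg}}$ counts values admitting at least one regular production. This is exactly the hypothesis that Proposition~\ref{prop:regular-stability} and hence Corollary~\ref{cor:regular-principal-perturbation} require, so the chain of corollaries applies. Second, the matrix produced by Corollary~\ref{cor:regular-to-25} satisfies every standing assumption of Proposition~\ref{prop:no-total-25}: principal simplicity, nonzero off-diagonal entries, and distinct diagonal entries.
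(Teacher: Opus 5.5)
Your proof is correct and matches the paper's argument. The lower bound is Proposition~\ref{prop:lower-bound}, and for the upper bound Corollary~\ref{cor:regular-to-25} turns a hypothetical matrix with at least $24$ regular values into a principally simple matrix with exactly $25$ Pareto values, which Proposition~\ref{prop:no-total-25} excludes.
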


\begin{proof}
Proposition~\ref{prop:lower-bound} gives the lower bound. If a matrix
had at least $24$ distinct regular Pareto values,
Corollary~\ref{cor:regular-to-25} would provide a matrix satisfying the
hypotheses of Proposition~\ref{prop:no-total-25} and having exactly
$25$ Pareto eigenvalues. This is impossible.
\end{proof}

\section{Reduction to principally simple matrices}
\label{sec:nonregular}

We now show that a matrix of order $4$  with at least $24$ Pareto
eigenvalues can be perturbed to a principally simple matrix without
decreasing their number. We first state the reduction theorem and then
prove the three lemmas used in its proof.

\begin{theorem}
\label{thm:reduction-to-principal}
Let $A\in\R^{4\times4}$ and suppose that
$$
p:=|\Pi(A)|\geq24.
$$
Every neighborhood of $A$ contains a principally simple matrix $C$
such that
$$
|\Pi(C)|\geq p.
$$
\end{theorem}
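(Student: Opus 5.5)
The plan is to handle separately the values of $\Pi(A)$ that already admit a regular production and those that do not. Values with a regular production persist under small perturbations by Proposition~\ref{prop:regular-stability}. For every other value, the plan is to create a nearby Pareto value using the fixed point index, and then to intersect the resulting open neighborhoods with the dense set of Proposition~\ref{prop:principal-density}.

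\emph{Step 1: separation.} Enumerate $\Pi(A)=\{\lambda_1,\ldots,\lambda_p\}$. Using the bound on principal root occurrences announced in the introduction, I first show that $A$ has only finitely many normalized Pareto eigenvectors, or at least that the fixed points of $F_A$ split into finitely many compact clusters $K_1,\ldots,K_p$ indexed by the values. For order $4$, at most $\sum_J|J|=32$ principal root occurrences are available, and $p\geq24$ is close to this maximum. I expect this to rule out a continuum of Pareto eigenvectors belonging to a multiple eigenvalue. Choose pairwise disjoint relatively open sets $U_\ell\subseteq\Delta$ with $K_\ell\subseteq U_\ell$, $\Fix F_A\cap\partial_\Delta U_\ell=\varnothing$, and the eigenvalue map $x\mapsto\gamma-\one^\top((\gamma\Id_4-A)x)_+$ taking values in pairwise disjoint intervals $I_\ell\ni\lambda_\ell$ on $\overline{U_\ell}$. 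This last condition holds after shrinking, by continuity.

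\emph{Step 2: regularization with nonzero index.} For each cluster I need $\ind(F_A,U_\ell)\neq0$. The index is stable under small perturbations of $A$: $F_M$ depends continuously on $M$ for a fixed $\gamma>\|A\|_1$, so $\ind(F_M,U_\ell)=\ind(F_A,U_\ell)$ for $M$ near $A$. A nonzero index then guarantees that $F_M$ has a fixed point in $U_\ell$, with eigenvalue in $I_\ell$. This gives $p$ distinct values for $M$ because the intervals are disjoint. The index could be zero for a degenerate cluster, for instance a tangency of two productions. To handle this, I invoke the simultaneous regularization lemma of Appendix~\ref{app:simultaneous-regularization}. It perturbs $A$ to a nearby $A'$ for which every $\lambda_\ell$ has a nearby regularly produced value inside $U_\ell$. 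Concretely, for each degenerate value I pick a producing support $J$, perturb $A_J$ so that the eigenvalue becomes simple, and perturb the exterior row entries so that the slacks become strict. Doing this simultaneously across all $p$ values is what the root-occurrence count is for: the total demand of the $p\geq24$ values stays below $32$, so the required perturbations do not conflict.

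\emph{Step 3: conclusion.} Once $A'$ has $p$ regularly produced values, Lemma~\ref{lem:regular-local-index} gives each of them local index $\pm1$. Corollary~\ref{cor:regular-principal-perturbation} then yields a principally simple $C$, arbitrarily close to $A$, with at least $p$ distinct Pareto values.

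The main obstacle is Step 2: producing a regular value near every nonregular one at the same time. The natural first attempt is an additive perturbation $A+\varepsilon E$ with generic $E$. I would restrict it to the relevant supports and use the fact that a $k$-fold root of $\chi_J$ splits into real roots along a suitable one-parameter direction. This needs $k\geq1$ odd, or a directional choice that keeps at least one real branch with a positive eigenvector. The delicate part is keeping that eigenvector positive and the slacks nonnegative along the branch, and I expect to use the occurrence count there to exclude the bad configurations.
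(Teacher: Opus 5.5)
\textbf{There is a genuine gap in Step 2.} You need one perturbation $A'$ in which every value lacking a regular production (at least every value whose cluster has index $0$) acquires a nearby regular production, all at the same time. Lemma~\ref{lem:simultaneous-regularization} does not supply this. It covers only $k\le3$ values. Its proof in Appendix~\ref{app:simultaneous-regularization} works by prescribing a map $H$ on the extended producing vectors and by using projections $P_r$ with $P_rx_s=\delta_{rs}x_r$, or else a special analysis of a single dependence $z=ax+by$. Neither is possible for five or more vectors in $\R^4$.

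Your alternative of perturbing each $A_J$ and its exterior rows separately also fails, because the same entries enter many principal submatrices and many slacks. The claim that ``the total demand stays below $32$'' is not an argument. You also never bound the number of nonregular values. In Step 1 you use the occurrence count only to get finitely many eigenvectors, which is unnecessary: grouping $\Fix F_A$ by the value $\gamma-\one^\top((\gamma\Id_4-A)x)_+$ already gives finitely many disjoint compact clusters, since $\Pi(A)$ is finite. The root-splitting route in your last paragraph is, as you acknowledge, left open.

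\textbf{Two ingredients are missing.}

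\emph{A quantitative bound.} Count the $28$ root occurrences of the principal characteristic polynomials of size at least two; this gives $p-S_1+I_{\mathrm{irr}}\le28$. Lemma~\ref{lem:large-spectrum-singletons} gives $S_1\le2$ when $p\ge24$; its proof uses the order-three restrictions and the signs of the observability determinants. Together these yield $|\Pi(A)\setminus\Pi^{\mathrm{reg}}(A)|\le6$. Counting all $32$ occurrences would only give $8$, which is too many.

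\emph{An index-doubling device}, so that you need not regularize every bad value.
\begin{itemize}
\item Clusters with nonzero index persist by the existence property.
\item Suppose a cluster $U_\lambda$ has index $0$ but contains a regular fixed point in a small $V_\lambda$. That point has index $\pm1$ by Lemma~\ref{lem:regular-local-index}, so the annulus $U_\lambda\setminus\overline{V_\lambda}$ has index $\mp1$. Such a cluster therefore yields two fixed points after any further small perturbation.
\item Consequently it suffices to regularize $\lceil h/2\rceil\le3$ of the $h\le6$ zero-index nonregular values. This is exactly the range of Lemma~\ref{lem:simultaneous-regularization}, which keeps the producing vectors and hence keeps the new regular fixed points inside the $U_\lambda$.
\item The count $(p-g-h)+2g+2\lceil h/2\rceil\ge p$ then passes to a nearby principally simple $C$ by homotopy invariance on all the sets $U_\mu$, $V_\lambda$ and their annuli.
\end{itemize}

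Note also that your Step 3 asks for $A'$ with $p$ distinct \emph{regular} values. That is stronger than anything this argument produces, and stronger than what it needs.
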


\subsection{Counting irregular productions}

We first show that at most two values are assigned to singleton
supports.

\begin{lemma}
\label{lem:large-spectrum-singletons}
Let $A\in\R^{4\times4}$ satisfy
$$
p:=|\Pi(A)|\geq24,
$$
and let $\tau$ be an assigned support map. Then
$$
S_1(A,\tau)\leq2.
$$
\end{lemma}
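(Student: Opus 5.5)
The plan is to redo the counting of Lemma~\ref{lem:S1-at-most-two} without principal simplicity. We use only the order-three restrictions of Proposition~\ref{prop:order-three-restrictions}, which do not require regularity, together with sign information that holds for every production. We must exclude $s:=S_1(A,\tau)\in\{3,4\}$. Throughout we use
$$
|\Pi(A)|=S_1+S_2+S_3+m_{1234},
\qquad
S_2\leq9,\quad S_3\leq12,\quad m_{1234}\leq4 .
$$
Lemma~\ref{lem:S2-bound} holds for every matrix and assigned map.

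\emph{Case $s=4$.} Every singleton produces, so $a_{ji}\geq0$ for all $i\neq j$, possibly with equality. After adding a scalar matrix, every principal submatrix is entrywise nonnegative, though possibly reducible. For a nonnegative matrix, any eigenvalue with a positive eigenvector equals the spectral radius \cite[Chapter~8]{HornJohnson2013}. Hence $p_J(A)\leq1$ for every $J$, and $|\Pi(A)|\leq15<24$.

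\emph{Case $s=3$.} Let $v$ be the nonproducing index and $\mathcal V=\{i,j,k\}$. The same nonnegativity argument applied to $A_{ijk}$ gives $p_{ijk}\leq1$ and $p_{ij},p_{ik},p_{jk}\leq1$. Lemma~\ref{lem:pair-orientation} still holds: two distinct values on a pair force two distinct positive roots of the quadratic, hence $a_{ij}\neq0$. So a pair $vi$ carries two values only if $a_{iv}<0<a_{vi}$. Let $r_v$ be the number of rich pairs at $v$. By Proposition~\ref{prop:order-three-restrictions}(iii), each triple $v\ell\ell'$ with $v\ell,v\ell'$ rich has $m\leq2$. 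As in Lemma~\ref{lem:S1-at-most-two} this gives
$$
|\Pi(A)|\leq 23+r_v-\binom{r_v}{2}+(m_{1234}-4).
$$
This already excludes $r_v\in\{0,3\}$, and it excludes $r_v\in\{1,2\}$ unless $m_{1234}=4$ and every intermediate bound is an equality.

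\emph{The tight subcase $m_{1234}=4$.} Here $p_{\Ind_4}(A)=4$, so Corollaries~\ref{cor:common-D-sign} and~\ref{cor:rich-edge-occupancy} apply. Their proofs only use that the four full eigenvalues are distinct with positive eigenvectors. For $r_v=1$ with rich pair $vi$, both triples $vij$ and $vik$ would have to be saturated for equality. This contradicts Corollary~\ref{cor:rich-edge-occupancy} and lowers the total to $23$.

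For $r_v=2$ with rich pairs $vi,vj$, equality forces $vik$ and $vjk$ to be saturated. The complementary slacks are strict, by the argument in Corollary~\ref{cor:rich-edge-occupancy}. Apply Lemma~\ref{lem:D-gluing} to $T=\{v,i,k\}$ with rich pair $vi$, which gives $\sgn D_j$, and to $T'=\{v,j,k\}$ with rich pair $vj$, which gives $\sgn D_i$. Corollary~\ref{cor:singleton-orientation} gives $d_i>d_v$ and $d_j>d_v$, so the orientations of both rich pairs are fixed. It remains to check, by the position-parity computation of Corollary~\ref{cor:rich-edge-occupancy}, that $D_i$ and $D_j$ then have opposite signs, contradicting Corollary~\ref{cor:common-D-sign}.

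This sign bookkeeping is the step I expect to be the main obstacle. If the parity does not close directly, the fallback is as follows. Saturation of $vik$ with $a_{ik},a_{vk}\geq0$ and $a_{iv}<0$ gives sign conditions in row $k$ through the exterior slacks of the pair $vi$. These, combined with the trace inequality of Lemma~\ref{lem:trace} in $vik$ and $vjk$, should produce incompatible orderings of $d_k$ relative to $d_v$.
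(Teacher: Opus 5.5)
Your proposal is correct and is essentially the paper's proof. It uses the Perron--Frobenius argument for $s=4$, the bound $23+r_v-\binom{r_v}{2}$ for $s=3$, rich-edge occupancy for $r_v=1$, and Lemma~\ref{lem:D-gluing} with Corollary~\ref{cor:common-D-sign} for $r_v=2$.

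The sign check you flagged closes directly, so no fallback is needed. After relabeling so that $v=4$, $i=1$, $j=2$, $k=3$, the gluing lemma applied to $134$ and $234$ gives $\sgn D_2=\sgn(d_4-d_1)<0$ and $\sgn D_1=-\sgn(d_4-d_2)>0$, because $d_1,d_2>d_4$.
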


\begin{proof}
We omit $(A,\tau)$ from the assigned counts and put
$$
q:=m_{1234}.
$$
Thus
$$
p=S_1+S_2+S_3+q.
$$

We first prove that $S_1\leq2$. If $S_1=4$, all singleton supports
produce. Adding a scalar matrix makes every principal submatrix
nonnegative without changing producing supports. If a nonnegative
matrix has a positive eigenvector, the corresponding eigenvalue is its
spectral radius.
Hence every support produces at most one value and $p\leq15$.

Suppose that $S_1=3$. Let $V$ be the three singleton vertices, let $v$
be the remaining vertex, and let $r_v$ be the number of rich pairs
joining $v$ to $V$. No pair inside $V$ is rich. Since
$$
m_{ij}\leq p_{ij}(A)
\leq1+\indic_{\{ij\text{ is rich}\}},
$$
Proposition~\ref{prop:order-three-restrictions} gives
$$
S_2\leq6+r_v,
\qquad
S_3\leq10-\binom{r_v}{2}.
$$
Indeed, the triple $V$ produces at most one value, and each pair of
rich edges $vi,vj$ forces $m_{vij}\leq2$. Therefore
$$
p
\leq
3+(6+r_v)+\left(10-\binom{r_v}{2}\right)+4
=
23+r_v-\binom{r_v}{2}
\leq24.
$$
Equality requires $p=24$, $q=4$, $r_v\in\{1,2\}$, and equality in
every support bound used above. In particular,
$p_{\Ind_4}(A)=4$. If $r_v=1$, all three triples
containing $v$ have $m_T=3$, hence are saturated. The unique rich edge
belongs to two of them. This contradicts
Corollary~\ref{cor:rich-edge-occupancy}.

Let $r_v=2$. Relabel so that $V=\{1,2,3\}$, $v=4$, and the rich pairs
are $14$ and $24$. Equality makes $134$ and $234$ saturated, while
Corollary~\ref{cor:singleton-orientation} gives
$$
d_1>d_4,
\qquad
d_2>d_4.
$$
The exterior slacks of the two triples are strict: a zero slack would
extend a produced triple eigenvector, with one zero coordinate, to an
eigenvector of $A$. Since $p_{\Ind_4}(A)=4$, the matrix $A$ has four
distinct real eigenvalues, and each eigenspace is one-dimensional and
generated by a positive vector. Applying
Lemma~\ref{lem:D-gluing} to $134$ and $234$ gives
$$
\sgn D_2=\sgn(d_4-d_1)<0,
\qquad
\sgn D_1=-\sgn(d_4-d_2)>0.
$$
This contradicts Corollary~\ref{cor:common-D-sign}. Thus $S_1\leq2$.
\end{proof}

We next bound the number of irregular assigned productions by counting
algebraic root occurrences.

\begin{lemma}
\label{lem:irregular-occurrence-budget}
Let $A\in\R^{4\times4}$, let $\tau$ be an assigned support map, and put
$$
p:=|\Pi(A)|.
$$
Denote by $I_{\mathrm{irr}}(A,\tau)$ the number of assigned
productions whose principal eigenvalue is multiple or whose exterior
slack has a zero coordinate. Then
$$
p-S_1(A,\tau)+I_{\mathrm{irr}}(A,\tau)\leq28.
$$
If $p\geq24$ and $\tau$ selects a regular production whenever the
corresponding value admits one, then
$$
|\Pi(A)\setminus\Pi^{\mathrm{reg}}(A)|
=
I_{\mathrm{irr}}(A,\tau)
\leq6.
$$
\end{lemma}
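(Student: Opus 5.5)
The plan is to count algebraic root occurrences of the principal characteristic polynomials on the non-singleton supports. The polynomials $\chi_J^A$ with $|J|\geq2$ have total degree
$$
6\cdot2+4\cdot3+1\cdot4=28.
$$
Call a pair $(K,\mu)$, with $|K|\geq2$ and $\mu$ a real root of $\chi_K^A$, a root site. Counted with algebraic multiplicity, there are at most $28$ root occurrences in total. I would define an injective charging from two kinds of objects into these $28$ occurrences:
\begin{itemize}
\item the assigned productions on supports of size at least $2$, which number $p-S_1(A,\tau)$;
\item the irregular assigned productions, including irregular singleton productions, which number $I_{\mathrm{irr}}(A,\tau)$.
\end{itemize}

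\textbf{Base charges.} For each $\lambda\in\Pi(A)$ with $J=\tau(\lambda)$ and $|J|\geq2$, the value $\lambda$ is a real eigenvalue of $A_J$. I charge it to one occurrence of $\lambda$ in $\chi_J^A$. Distinct values give distinct roots, so these base charges are injective.

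\textbf{Extra charges.} Now let the production of $\lambda$ on $J=\tau(\lambda)$ be irregular. There are two cases.
\begin{itemize}
\item If $\lambda$ is a multiple eigenvalue of $A_J$, then $|J|\geq2$. I charge a second occurrence of $\lambda$ in $\chi_J^A$, which is available because the multiplicity is at least $2$.
\item Otherwise some exterior slack vanishes in a row $i\notin J$. Extension of the positive eigenvector by a zero coordinate at $i$ shows that $\lambda$ is an eigenvalue of $A_K$, where $K=J\cup\{i\}$ and $|K|\geq2$. I charge an occurrence of $\lambda$ in $\chi_K^A$. This covers the singleton case $|J|=1$ as well.
\end{itemize}

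\textbf{Injectivity.} This is the only delicate point. Every charge made for $\lambda$ lies at a root site of the form $(\cdot,\lambda)$. Since distinct values have distinct roots, charges for different values never collide. For a fixed $\lambda$ there is at most one base charge, at the site $(J,\lambda)$, and at most one extra charge. In the multiple-root case the extra charge is a different copy at the same site. In the zero-slack case it sits at $(K,\lambda)$ with $K\neq J$. No base charge lives at $(K,\lambda)$, because the base charge of $\lambda$ is at $\tau(\lambda)=J\neq K$, and the base charges of other values are at other roots. So the first copy of $\lambda$ in $\chi_K^A$ is free.

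Hence the total number of charges is at most $28$. This gives
$$
p-S_1(A,\tau)+I_{\mathrm{irr}}(A,\tau)\leq28.
$$

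\textbf{Second assertion.} Suppose $\tau$ selects a regular production whenever one exists. Then a value has an irregular assigned production exactly when it admits no regular production. Therefore
$$
I_{\mathrm{irr}}(A,\tau)=|\Pi(A)\setminus\Pi^{\mathrm{reg}}(A)|.
$$
When $p\geq24$, Lemma~\ref{lem:large-spectrum-singletons} gives $S_1\leq2$, and so
$$
I_{\mathrm{irr}}\leq28-p+S_1\leq28-24+2=6.
$$
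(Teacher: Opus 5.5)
Your proposal is correct and follows the paper's proof essentially step for step. It uses the same count of $28$ root occurrences on supports of size at least two, one base occurrence per nonsingleton value, and an extra occurrence for each irregular production (a second copy of a multiple root, or the root of $A_{J\cup\{i\}}$ coming from a zero slack in row $i$). Injectivity holds for the same reason, namely that every charge for $\lambda$ sits at a root equal to $\lambda$, and the final bound of $6$ again comes from Lemma~\ref{lem:large-spectrum-singletons}.
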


\begin{proof}
We omit $(A,\tau)$ from the notation and write
$I_{\mathrm{irr}}:=I_{\mathrm{irr}}(A,\tau)$. Consider the roots,
counted with algebraic multiplicity, of all principal characteristic
polynomials with support size at least two. Each occurrence is
identified by its support and by its position among the copies of a
multiple root. There are
$$
6\cdot2+4\cdot3+4=28
$$
such occurrences. Assign one occurrence to each assigned nonsingleton
value. For each irregular nonsingleton value assigned to a support $J$,
assign one additional occurrence. If its root in $\chi_J^A$ is
multiple, use a second copy of this root. Otherwise, choose a zero
exterior slack in row $i$ and use the corresponding root of
$A_{J\cup\{i\}}$, obtained by extending the eigenvector by zero. For an
irregular singleton, use the pair root obtained from a zero exterior
slack, since a $1\times1$ eigenvalue is never multiple; no initial
occurrence was assigned to this value. For an irregular nonsingleton,
the additional occurrence differs from its initial occurrence.
Distinct values cannot use the same occurrence. Hence this assignment
is injective and
$$
p-S_1+I_{\mathrm{irr}}\leq28.
$$
If $p\geq24$, Lemma~\ref{lem:large-spectrum-singletons} gives
$S_1\leq2$. The convention on $\tau$ means that an assigned production
is irregular exactly when its Pareto value admits no regular
production. Therefore
$$
|\Pi(A)\setminus\Pi^{\mathrm{reg}}(A)|
=I_{\mathrm{irr}}
\leq28-p+S_1
\leq6.
$$
\end{proof}

\subsection{Simultaneous regularization}

At most three distinct Pareto eigenvalues can be regularized
simultaneously while keeping their producing vectors fixed.

\begin{lemma}
\label{lem:simultaneous-regularization}
Let $\lambda_1,\ldots,\lambda_k$ be distinct Pareto eigenvalues of
$A\in\R^{4\times4}$, where $1\leq k\leq3$. For each $\ell$, let
$J_\ell$ produce $\lambda_\ell$ through $u_\ell>0$. Then every
neighborhood of $A$ contains a matrix $B$ and pairwise distinct real
numbers $\beta_1,\ldots,\beta_k$ such that
$$
B_{J_\ell}u_\ell=\beta_\ell u_\ell,
\qquad
B_{J_\ell^{\mathrm c},J_\ell}u_\ell>0,
$$
and $\beta_\ell$ is an algebraically simple eigenvalue of
$B_{J_\ell}$ for every $\ell$. Thus all selected productions become
regular through the same vectors.
\end{lemma}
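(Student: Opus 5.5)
The plan is to look for $B=A+E$ with $E$ small, subject to linear constraints that keep each $u_\ell$ as a producing vector, and then to use the remaining freedom to make the eigenvalues simple. Extend each $u_\ell$ by zero to $\tilde u_\ell\in\R^4$. The requirements $B_{J_\ell}u_\ell=\beta_\ell u_\ell$ and $B_{J_\ell^{\mathrm c},J_\ell}u_\ell>0$ together say
$$
E\tilde u_\ell=(\beta_\ell-\lambda_\ell)\tilde u_\ell+\varepsilon w_\ell-s_\ell,
$$
where $s_\ell:=A_{J_\ell^{\mathrm c},J_\ell}u_\ell\geq0$ is the old slack (placed on $J_\ell^{\mathrm c}$) and $w_\ell$ is any vector that is positive on $J_\ell^{\mathrm c}$ and zero on $J_\ell$. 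Dropping the $-s_\ell$ term and using $E\tilde u_\ell=c_\ell\tilde u_\ell+\varepsilon w_\ell$, with $c_\ell$ small, already suffices: the new slack is then $s_\ell+\varepsilon w_\ell>0$. These are $k$ vector equations, linear in $E$, one for each $\tilde u_\ell$.

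\textbf{Step 1: consistency of the linear system.} If $\tilde u_1,\dots,\tilde u_k$ are linearly independent, which always holds for $k\leq3$ unless a nontrivial dependence exists among vectors with different supports, I take $E$ to be any solution of minimal norm. Its norm is $O(\varepsilon+\max|c_\ell|)$. If the vectors are dependent, say $\tilde u_3=a\tilde u_1+b\tilde u_2$, the right-hand sides must satisfy the same relation. I would handle this by choosing the $c_\ell$ and $w_\ell$ compatibly.
\begin{itemize}
\item Since the $\lambda_\ell$ are distinct, a positive vector cannot be a combination of eigenvectors of the same matrix block in a contradictory way. The supports then force $J_3\subseteq J_1\cup J_2$, with sign and support constraints on $a,b$.
\item I would check directly that a choice of positive $w_\ell$ remains possible on the relevant complements, using that the coordinates outside $J_1\cup J_2$ lie in every complement.
\end{itemize}
I expect this dependent case, and more generally the compatibility of the exterior positivity requirements when supports are nested, to be the main obstacle. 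The hypothesis $k\leq3<4$ is exactly what leaves a free direction.

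\textbf{Step 2: distinct values and simplicity.} Take $\beta_\ell=\lambda_\ell+c_\ell$. Distinctness is automatic for small $c_\ell$. The admissible $E$ form a nonempty affine subspace $\mathcal E$. Its direction space is $\mathcal E_0=\{F:F\tilde u_\ell=0\ \forall\ell\}$, which contains all $F$ whose rows annihilate $\operatorname{span}\{\tilde u_\ell\}$. For each $\ell$, algebraic simplicity of $\beta_\ell$ in $B_{J_\ell}$ fails exactly when the polynomial
$$
\operatorname{disc}\text{-type quantity}\quad \frac{d}{dt}\chi_{J_\ell}^{B}(t)\Big|_{t=\beta_\ell}
$$
vanishes. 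Since $\beta_\ell$ is fixed on each fibre, this is a polynomial in $E\in\mathcal E$. It therefore suffices to exhibit, for each $\ell$ separately, one point of $\mathcal E$ near $A$ where it is nonzero; a finite product of nonzero polynomials is then nonzero on a dense subset.

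\textbf{Step 3: one good point for each $\ell$.} Let $V=\R^{J_\ell}/\R u_\ell$. Perturbations $F\in\mathcal E_0$ supported in the block $J_\ell\times J_\ell$ change the induced quotient map $\bar B_{J_\ell}$ on $V$ by a large family of maps. The constraint coming from the other $\tilde u_m$ removes at most two directions of row freedom. To move the spectrum of $\bar B_{J_\ell}$ away from $\beta_\ell$, I would add $\delta\,\Id$ on a complement of $u_\ell$ inside $\R^{J_\ell}$, corrected by a projection so that it kills the $\tilde u_m$. Then $\beta_\ell$ is simple exactly when $\det(\beta_\ell\Id_V-\bar B_{J_\ell})\neq0$, which holds for all but finitely many $\delta$. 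When $|J_\ell|=1$ simplicity is automatic. Combining Steps 1–3 and letting $\varepsilon,c_\ell,\delta\to0$ gives $B$ in any neighborhood of $A$.
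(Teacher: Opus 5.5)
Your Step~1 is correct in the independent case, and your Step~2 genericity reduction is sound for whatever parameter family you vary. The gap is that Step~3 varies the wrong thing, and the dependent case is left open.

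\textbf{Step~3 fails.} Every $F\in\mathcal E_0$ kills $\tilde u_\ell$ and all the other $\tilde u_m$. So if $u_\ell$, together with the $\tilde u_m$ having $J_m\subseteq J_\ell$, spans $\R^{J_\ell}$, then $F_{J_\ell}=0$. In that case $B_{J_\ell}$ is completely frozen by the Step~1 data, and your ``$\delta\,\Id$ corrected by a projection'' is the zero map. Here is a concrete instance. Take $J_1=\{1\}$, $J_2=\{2\}$, $J_3=\{1,2,3\}$, $A_{123}=\bigl(\begin{smallmatrix}1&0&-1\\0&2&-2\\0&2&-2\end{smallmatrix}\bigr)$, the fourth row of $A$ zero, and $u_3=(1,1,1)$. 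These supports produce $1,2,0$, and $\chi^A_{123}(t)=t^2(t-1)$. Choose the admissible data $c_\ell=0$, $w_1=(0,2,1,1)$, $w_2=(4,0,2,1)$, $w_3=(0,0,0,1)$. Then every $E$ in your fibre gives $B_{123}=\bigl(\begin{smallmatrix}1&4\varepsilon&-1-4\varepsilon\\2\varepsilon&2&-2-2\varepsilon\\\varepsilon&2+2\varepsilon&-2-3\varepsilon\end{smallmatrix}\bigr)$, whose characteristic polynomial is $t^2(t-1+3\varepsilon)$. So $\beta_3=0$ stays a double root.

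The missing idea is to move the selected eigenvalue while freezing the complementary spectrum, i.e.\ to use $c_\ell$ as the lever rather than $\mathcal E_0$. The paper chooses $\phi_r$ with $\phi_r^\top\tilde u_s=\delta_{rs}$ and adds $\varepsilon_r\tilde u_r\phi_r^\top$. This perturbation fixes every other $\tilde u_s$ and every slack. On $\R^{J_r}$ it is $\varepsilon_r$ times the projection onto $\R u_r$ along $\ker\phi_r^\top\cap\R^{J_r}$. Hence the quotient block is unchanged and the eigenvalue becomes $\lambda_r+\varepsilon_r$. A generic small $\varepsilon_r$ avoids the finite quotient spectrum, and choosing the $\varepsilon_r$ successively preserves the simplicity already obtained. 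Your Step~1 family in fact contains this matrix (take $c_r=\varepsilon_r$); you simply froze the $c_\ell$ in Steps~2--3.

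\textbf{The dependent case.} You leave it open, yet it is where most of the paper's work lies, and there the shifts can no longer be varied independently. Since the vectors are nonnegative and pairwise non-proportional, the relation can be written $z=ax+by$ with $a,b>0$. Comparing $Az$ with $aAx+bAy$ gives:
\begin{itemize}
\item $b(Ay)_i=a(\lambda_z-\lambda_x)x_i$ on $X\setminus Y$;
\item the symmetric identity on $Y\setminus X$;
\item $a(\lambda_x-\lambda_z)x_i+b(\lambda_y-\lambda_z)y_i=0$ on $X\cap Y$.
\end{itemize}
Hence the supports are either disjoint, with $\lambda_z>\max\{\lambda_x,\lambda_y\}$, or properly nested, say $Y\subsetneq X$, with $\lambda_x<\lambda_z<\lambda_y$ and $x|_Y\in\R y$. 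In both configurations every slack inside $Z=X\cup Y$ is already strict, so positivity only has to be created outside $Z$.

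Your requirement that each $w_\ell$ be positive on all of $J_\ell^{\mathrm c}$ instead couples the shifts. In the nested case, compatibility on $X\setminus Y$ and on $Y$ forces $c_x<c_z<c_y$, so the $c$'s cannot serve as independent levers.

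The paper instead uses a single common shift $sP$. Here $P$ is a projection onto $W=\operatorname{span}\{x,y\}$ built so that:
\begin{itemize}
\item $P$ maps each selected $\R^J$ onto $W\cap\R^J$;
\item $W\cap\R^J$ is invariant under $(A_0)_J$;
\item the selected eigenvalues are simple on $W\cap\R^J$.
\end{itemize}
On $W$ the eigenvalues are $\lambda_z$ and $\lambda_x+\lambda_y-\lambda_z$ in the disjoint case, and $\lambda_x,\lambda_z$ in the nested case. A generic small $s$ then separates them from the quotient spectra.

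As it stands, your proposal does not establish simplicity even in the independent case, and it does not treat the dependent case.
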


The proof of this perturbation lemma is given in
Appendix~\ref{app:simultaneous-regularization}.

\subsection{Index continuation}

The next lemma uses the fixed point index to perturb the matrix without
decreasing the number of Pareto eigenvalues.

\begin{lemma}
\label{lem:index-recovery}
Let $A\in\R^{4\times4}$ and suppose that
$$
|\Pi(A)\setminus\Pi^{\mathrm{reg}}(A)|\leq6.
$$
Then every neighborhood $\mathcal O$ of $A$ contains a principally
simple matrix $C$ such that
$$
|\Pi(C)|\geq|\Pi(A)|.
$$
\end{lemma}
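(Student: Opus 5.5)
The plan is to split $\Pi(A)$ into its regular part and its irregular part, and to treat the two parts by different mechanisms. The irregular values are regularized by Lemma~\ref{lem:simultaneous-regularization}, which allows at most three at a time; the fixed point index then carries the leftover values through a perturbation. Write
$$
\Pi(A)\setminus\Pi^{\mathrm{reg}}(A)=\{\mu_1,\ldots,\mu_k\},
\qquad
k\leq6,
$$
and split this set into at most two batches of at most three values each. Take the first batch and apply Lemma~\ref{lem:simultaneous-regularization} to it inside a small neighborhood $\mathcal O_1\subseteq\mathcal O$. This produces a matrix $B$ in which these values become regular, at nearby values $\beta_\ell$, through the same producing vectors. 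By Proposition~\ref{prop:regular-stability}, the values that were already regular for $A$ remain regular and distinct, provided $\mathcal O_1$ is chosen small enough. The values of the second batch, however, are not controlled by this lemma, and they must be followed by an index argument.

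The mechanism for following them is the fixed point index of the map $F_M$ from Proposition~\ref{prop:fixed-point-representation}, with one fixed $\gamma$ that exceeds $\|M\|_1$ for every $M$ near $A$. The Pareto values of $A$ are finitely many, and they are separated. Around each one I would place a relatively open set $W_\mu\subseteq\Delta$. I would like to take $W_\mu$ to be a neighbourhood of the set of normalized fixed points whose value is $\mu$. This set is compact, since by the eigenvalue formula the value is a continuous function of the fixed point. I choose the sets so that the closures of the $W_\mu$ are pairwise disjoint and so that the value function stays within a small interval $I_\mu$ on $\overline{W_\mu}$, with the intervals $I_\mu$ pairwise disjoint. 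By homotopy invariance along the segment from $A$ to a nearby matrix $M$, the index $\ind(F_M,W_\mu)$ equals $\ind(F_A,W_\mu)$ whenever $F_M$ has no fixed point on $\partial W_\mu$; this holds for $M$ close to $A$, by compactness. If this index is nonzero, then $M$ has a fixed point in $W_\mu$, and therefore a Pareto value in $I_\mu$. Distinct $\mu$ then give distinct values of $M$. For the regular values the index is $\pm1$ by Lemma~\ref{lem:regular-local-index}, so they survive.

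The main obstacle is that an irregular value may have local index $0$, so the index alone does not protect it. This is why the regularization step comes before the index argument. The plan is therefore the following sequence of perturbations, and I would make precise the index bookkeeping it relies on.

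\textbf{Step 1.} Regularize the first batch to obtain $B$, as above. In $B$ the values of the second batch are still present, because Lemma~\ref{lem:simultaneous-regularization} modifies only what it must. Alternatively, one can arrange the perturbation to preserve the second batch: keep its producing vectors as eigenvectors, with nonnegative slacks. If that is not available, I would apply the lemma to the second batch first and accept that the first batch is then recovered only through the index.

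\textbf{Step 2.} Apply Lemma~\ref{lem:simultaneous-regularization} to the second batch of $B$, with a perturbation so small that all the regular values of $B$ persist by Proposition~\ref{prop:regular-stability}. The result is a matrix whose values are all regular, and which has at least $|\Pi(A)|$ of them.

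\textbf{Step 3.} Finish with Corollary~\ref{cor:regular-principal-perturbation}, which gives a principally simple matrix $C\in\mathcal O$ with $|\Pi(C)|\geq|\Pi(A)|$.

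The delicate point, and the one I expect to take the most work, is keeping the second batch alive through Step 1. If the lemma's perturbation can be taken supported on rows and columns that leave the second-batch productions untouched, the argument is direct. Otherwise I would instead show that a cluster of values sharing a nonzero total index survives, using the additivity of the index together with the facts that the total index is $L(F)=1$ and that the regular indices are $\pm1$. I would first try the direct structural route. Failing that, I would use the index-sum argument, together with the budget $k\leq6$ that limits the number of batches to two.
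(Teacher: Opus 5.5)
There is a genuine gap in Step~1. Nothing in Lemma~\ref{lem:simultaneous-regularization} keeps the second batch alive. Its perturbation $A+tH+\sum_r\varepsilon_rP_r$ is constrained only on the selected vectors. An irregular production, whether it has a multiple principal eigenvalue or a vanishing exterior slack, can be destroyed by a small perturbation: a double root of $\chi_J$ can split into a nonreal pair, and a zero slack can become negative. If the fixed point cluster of that value has index $0$, nothing forces a replacement. Step~2 may then have nothing to act on, and the final count can fall below $|\Pi(A)|$.

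Your ``direct structural route'' would require preserving up to six production relations at once. The lemma is stated only for $k\leq3$, and its proof prescribes $H$ on the selected vectors, which is impossible in general for six vectors in $\R^4$. Your fallback is not an argument either. The global identity $L(F)=1$ says nothing about individual clusters, and a zero-index cluster is exactly the case the index does not protect.

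The missing idea is a compensation mechanism inside zero-index clusters. Values whose isolating neighbourhood $U_\lambda$ has $\ind(F_A,U_\lambda)\neq0$ survive any small perturbation by the existence property, as you noticed. So only the $h\leq6$ irregular values with zero cluster index matter.

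Suppose one such value is regularized. Its regular fixed point lies in a small $V_\lambda$ with $\overline{V_\lambda}\subset U_\lambda$ and $\ind(F_B,V_\lambda)=\varepsilon_\lambda=\pm1$, by Lemma~\ref{lem:regular-local-index}. Additivity then gives $\ind(F_B,U_\lambda\setminus\overline{V_\lambda})=-\varepsilon_\lambda\neq0$. Now perturb to a principally simple $C$, carrying the indices on every $U_\mu$, $V_\lambda$ and annulus by homotopy invariance. Then $U_\lambda$ contains at least two fixed points of $F_C$, hence two distinct Pareto values of $C$.

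Each regularized zero-index value therefore pays for one zero-index value that may be lost. So a single application of Lemma~\ref{lem:simultaneous-regularization} to $\lceil h/2\rceil\leq3$ of these values suffices. The count $(p-g-h)+2g+2\lceil h/2\rceil\geq p$ follows, where $g$ counts regular values with zero cluster index, which get the same doubling for free. This is how the paper's proof reconciles a budget of six irregular values with a regularization lemma limited to three; no second batch is ever regularized.
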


\smallskip
\begin{proof}
Put $p:=|\Pi(A)|$ and fix a neighborhood $\mathcal O$ of $A$.
Choose $\gamma>\|A\|_1$ with a strict margin and, for every matrix $M$
near $A$, define
$$
F_M(x):=
\frac{((\gamma\Id_4-M)x)_+}
{\one^\top((\gamma\Id_4-M)x)_+},
\qquad x\in\Delta.
$$
For $\lambda\in\Pi(A)$, set
$$
K_\lambda
:=
\left\{
x\in\Fix F_A:
\gamma-\one^\top((\gamma\Id_4-A)x)_+=\lambda
\right\}.
$$
These compact sets are pairwise disjoint because a fixed point
determines its value through the displayed formula.
The spectrum is finite and
$$
\Fix F_A=\bigsqcup_{\lambda\in\Pi(A)}K_\lambda.
$$
Choose relatively open isolating neighborhoods $U_\lambda$ with
pairwise disjoint closures. Their boundaries relative to $\Delta$ are
compact and contain no fixed point of $F_A$. Uniform continuity of
$(M,x)\mapsto F_M(x)$ gives
a convex neighborhood
$\mathcal N\subset\mathcal O$ of $A$ such that no $F_M$,
$M\in\mathcal N$, has a fixed point on any
$\partial_\Delta U_\lambda$, and
$\gamma>\|M\|_1$ throughout $\mathcal N$.
Put $\nu_A(\lambda):=\ind(F_A,U_\lambda)$. Homotopy invariance
preserves these indices throughout $\mathcal N$.

Let $\mathcal G$ be the set of values $\lambda$ such that
$\nu_A(\lambda)=0$ and $\lambda$ admits a regular production. Let
$\mathcal H$ be the set of the remaining values with zero index. Put
$g:=|\mathcal G|$ and $h:=|\mathcal H|$. The hypothesis gives
$h\leq6$. For every value $\lambda\in\mathcal G$, choose one regular
production and shrink $\mathcal N$ to a smaller convex neighborhood of
$A$ so that this production persists and its normalized fixed point
remains in $U_\lambda$ throughout $\mathcal N$.

Choose
$$
k:=\left\lceil\frac h2\right\rceil\leq3
$$
and choose a set $\mathcal S\subseteq\mathcal H$ with
$|\mathcal S|=k$, taking $\mathcal S=\varnothing$ if $h=0$. Choose one
production for each $\lambda\in\mathcal S$. If $h=0$, put $B:=A$.
Otherwise, Lemma~\ref{lem:simultaneous-regularization}, applied inside
$\mathcal N$, gives a matrix $B\in\mathcal N$. For each
$\lambda\in\mathcal S$, the normalization of the selected producing
vector is then a regular fixed point of $F_B$ in $U_\lambda$. The
producing vector is unchanged, although its Pareto value may change
under the perturbation.

For each $\lambda\in\mathcal G\cup\mathcal S$, choose a relatively open
isolating neighborhood
$V_\lambda$ of the corresponding regular fixed point such that
$$
\overline{V_\lambda}\subset U_\lambda.
$$
Put
$$
W_\lambda
:=
U_\lambda\setminus\overline{V_\lambda}.
$$
This is a relatively open annulus in $\Delta$. By
Lemma~\ref{lem:regular-local-index},
$$
\ind(F_B,V_\lambda)
=:
\varepsilon_\lambda
\in\{-1,1\}.
$$
There is no fixed point on $\partial_\Delta V_\lambda$. Excision and
additivity therefore give
$$
\ind(F_B,U_\lambda)
=
\ind(F_B,V_\lambda)+\ind(F_B,W_\lambda).
$$
Since $\ind(F_B,U_\lambda)=0$, it follows that
$$
\ind(F_B,W_\lambda)
=
-\varepsilon_\lambda
\neq0.
$$

Let $\mathcal K$ be the union of all boundaries
$\partial_\Delta U_\mu$, $\mu\in\Pi(A)$, and all boundaries
$\partial_\Delta V_\lambda$ introduced above. This set is compact and
contains no fixed point of $F_B$. By uniform continuity, there is a
convex neighborhood $\mathcal B$ of $B$ such that
$$
\mathcal B\subset\mathcal N
$$
and no $F_M$, $M\in\mathcal B$, has a fixed point on $\mathcal K$.
By Proposition~\ref{prop:principal-density}, choose a principally
simple matrix $C\in\mathcal B$ sufficiently close to $B$. For
$$
M_t:=(1-t)B+tC,
\qquad
0\leq t\leq1,
$$
the whole segment lies in $\mathcal B$. Hence the homotopy
$t\mapsto F_{M_t}$ has no fixed point on any relevant boundary
$\partial_\Delta U_\mu$ or $\partial_\Delta V_\lambda$. Since
$$
\partial_\Delta W_\lambda
\subseteq
\partial_\Delta U_\lambda\cup\partial_\Delta V_\lambda,
$$
homotopy invariance preserves the indices on every $U_\mu$,
$V_\lambda$, and $W_\lambda$ from $B$ to $C$.

By the existence property of the fixed point index, each set
$U_\lambda$ with $\nu_A(\lambda)\neq0$ contains at least one fixed
point of $F_C$. For each $\lambda\in\mathcal G\cup\mathcal S$, both
$V_\lambda$ and $W_\lambda$ contain a fixed point of $F_C$.

There are $p-g-h$ sets $U_\lambda$ with nonzero index. Therefore
$$
|\Fix F_C|
\geq
(p-g-h)+2g+2k
=
p+g-h+2\left\lceil\frac h2\right\rceil
\geq p.
$$
For a principally simple matrix, fixed points correspond to distinct
regular Pareto values. Hence $|\Pi(C)|=|\Fix F_C|\geq p$. All
perturbations can be chosen inside $\mathcal O$.
\end{proof}

The preceding lemmas now prove the reduction theorem.

\begin{proof}
Fix a neighborhood $\mathcal O$ of $A$. Choose an assigned support map
$\tau$ which selects a regular production whenever the corresponding
Pareto value admits one. Lemma~\ref{lem:large-spectrum-singletons}
gives
$$
S_1(A,\tau)\leq2.
$$
Lemma~\ref{lem:irregular-occurrence-budget} then gives
$$
|\Pi(A)\setminus\Pi^{\mathrm{reg}}(A)|\leq6.
$$
Lemma~\ref{lem:index-recovery}, applied in $\mathcal O$, provides a
principally simple matrix $C\in\mathcal O$ such that
$$
|\Pi(C)|\geq|\Pi(A)|.
$$
\end{proof}

\section{Proof of the main theorem}
\label{sec:proof-main}

We now combine the sharp example, the regular upper bound, and the
reduction theorem.

\begin{proof}[Proof of Theorem~\ref{thm:main}]
Proposition~\ref{prop:lower-bound} gives $c_4\geq23$. Suppose that a
real matrix $A$ of order $4$  has at least $24$ distinct Pareto
eigenvalues. Theorem~\ref{thm:reduction-to-principal} gives a
principally simple matrix $C$ such that
$$
|\Pi(C)|\geq|\Pi(A)|.
$$
Every Pareto production of $C$ is regular. Hence
Theorem~\ref{thm:regular-capacity} gives
$$
24
\leq
|\Pi(A)|
\leq
|\Pi(C)|
=
|\Pi^{\mathrm{reg}}(C)|
\leq
c_4^{\mathrm{reg}}
=23,
$$
a contradiction. Therefore $c_4\leq23$. Combining both bounds yields
$$
c_4=23.
$$
This proves Theorem~\ref{thm:main}.
\end{proof}

We now compare our reduction theorem with Kielstra's earlier result.
\begin{remark}
Kielstra \cite[Corollary~6.1.7]{Kielstra2023} also obtained
$c_4=c_4^{\mathrm{reg}}$. His proof combines a computer-assisted
exhaustive enumeration of sign patterns of order $4$,
implemented by the code given in Appendix~A of his thesis, with
perturbation arguments. Our proof does not use this enumeration. The
regular upper bound follows from parity and restrictions on support
profiles. Theorem~\ref{thm:reduction-to-principal} uses a count of
algebraic root occurrences, simultaneous regularization, and
continuation of the fixed point index.
\end{remark}

\section{Conclusion and outlook}
\label{sec:conclusion}
In this paper, we proved that a real matrix of order four has at most $23$ distinct Pareto eigenvalues. We also gave an explicit matrix for which this bound is attained.

The proof is divided into two parts. First, we considered matrices for which every Pareto eigenvalue is produced by exactly one support and this production is regular. By using a fixed point formulation, we proved that the number of Pareto eigenvalues is odd. This result is valid in every order under the same assumptions. In order $4$, the support profiles of the principal submatrices, the rich-pair graph, and the signs of the observability determinants exclude the case of $25$ regular Pareto eigenvalues. Therefore,
$$
c_4^{\mathrm{reg}}=23.
$$

Second, we considered general real matrices of order four. A bound on the number of principal root occurrences, together with simultaneous regularization and continuation of the fixed point index, shows that a matrix with a nonregular Pareto spectrum can be replaced by a nearby regular matrix having at least the same number of Pareto eigenvalues. Consequently,
$$
c_4=c_4^{\mathrm{reg}}=23.
$$

It would be interesting to extend this approach to higher orders. The parity argument still applies under the regularity and uniqueness assumptions, but the number of principal root occurrences increases rapidly and the possible support profiles become more difficult to study. The reduction to regular spectra in higher orders is left for future work.

\appendix

\section{Simultaneous regularization}
\label{app:simultaneous-regularization}

We give the linear-algebraic construction used in
Lemma~\ref{lem:simultaneous-regularization}.

\begin{proof}[Proof of Lemma~\ref{lem:simultaneous-regularization}]
Extend each $u_\ell$ by zero to a vector $x_\ell\in\R^4$. No two of
these vectors are proportional, since their associated Pareto values
are distinct.

Suppose first that $x_1,\ldots,x_k$ are linearly independent. For
$J_\ell=\supp x_\ell$, choose $v_\ell$ equal to zero on $J_\ell$ and
strictly positive on $J_\ell^{\mathrm c}$. There is a linear map $H$
such that
$$
Hx_\ell=v_\ell
\qquad
(1\leq\ell\leq k).
$$
For $t>0$ small, all selected exterior slacks are strict in
$$
A_0:=A+tH,
$$
while the principal eigenvalue equations are unchanged.

Choose vectors $\ell_r\in\R^4$ such that
$$
\ell_r^\top x_s=\delta_{rs},
$$
where $\delta_{rs}$ is the Kronecker symbol, and put
$P_r=x_r\ell_r^\top$. On
$$
\R^{J_r}
=
\R x_r\oplus\bigl(\ker\ell_r^\top\cap\R^{J_r}\bigr),
$$
the matrices $(A_0)_{J_r}$ and $(P_r)_{J_r}$ have the forms
$$
\begin{pmatrix}
\lambda_r&*\\
0&C_r
\end{pmatrix},
\qquad
\begin{pmatrix}
1&0\\
0&0
\end{pmatrix}.
$$
For every sufficiently small coefficient outside a finite exceptional
set, adding this multiple of $P_r$ separates the eigenvalue associated
with $x_r$ from the quotient spectrum. Choose the coefficients
$\varepsilon_r$ successively. At each step, avoid these exceptional
values and take $\varepsilon_r$ small enough to preserve the simplicity
obtained at the preceding steps. Since
$$
P_rx_s=\delta_{rs}x_r,
$$
for the matrix
$$
B:=A_0+\sum_{r=1}^k\varepsilon_rP_r,
$$
put
$$
\beta_r:=\lambda_r+\varepsilon_r
\qquad
(1\leq r\leq k).
$$
Then $u_r$ remains an eigenvector of $B_{J_r}$, the principal
eigenvalue $\beta_r$ is simple, and the strict exterior slack is
unchanged. The coefficients may also be chosen so that the values
$\beta_1,\ldots,\beta_k$ remain pairwise distinct.

It remains to consider $k=3$ when the vectors are linearly dependent.
Since no two vectors are proportional, every coefficient in a
nontrivial dependence relation is nonzero. Because the vectors are
nonnegative, we may relabel them so that
$$
z=ax+by,
\qquad
a,b>0.
$$
Write $X=\supp x$, $Y=\supp y$, and
$Z=X\cup Y=\supp z$. The production equations give
$$
b(Ay)_i=a(\lambda_z-\lambda_x)x_i
\quad\text{for }i\in X\setminus Y,
$$
$$
a(Ax)_i=b(\lambda_z-\lambda_y)y_i
\quad\text{for }i\in Y\setminus X,
$$
and
$$
a(\lambda_x-\lambda_z)x_i
+
b(\lambda_y-\lambda_z)y_i=0
\quad\text{for }i\in X\cap Y.
$$
If $X\cap Y$, $X\setminus Y$, and $Y\setminus X$ are all nonempty,
the first two relations give
$$
\lambda_z\geq\lambda_x,
\qquad
\lambda_z\geq\lambda_y.
$$
Both inequalities are strict because the three values are distinct,
and this contradicts the third relation. Equal supports are also
impossible, since eigenvectors associated with three distinct
eigenvalues of the same principal matrix are linearly independent.
Thus the supports are disjoint or properly nested. The same relations
give, respectively,
$$
X\cap Y=\varnothing,
\qquad
\lambda_z>\max\{\lambda_x,\lambda_y\},
$$
or, after exchanging $x$ and $y$,
$$
Y\subsetneq X=Z,
\qquad
\lambda_x<\lambda_z<\lambda_y,
$$
and the restriction of $x$ to $Y$ belongs to $\R y$.

In the disjoint case, the first two relations make the slacks of $y$
on $X$ and of $x$ on $Y$ strict. In the nested case, the first
relation makes the slack of $y$ on $X\setminus Y$ strict. Thus, in
both configurations, only slacks outside $Z$ can vanish. Choose
$v_x,v_y$ equal to zero on $Z$ and positive on $Z^{\mathrm c}$, and
define
$$
Hx=v_x,
\qquad
Hy=v_y.
$$
Then $Hz=av_x+bv_y$. For $t>0$ small, the matrix
$$
A_0:=A+tH
$$
makes all three exterior slacks strict and preserves the selected
principal eigenvalue equations.

Set $W=\operatorname{span}\{x,y\}$. We construct a projection $P$ onto
$W$ which preserves the selected coordinate subspaces. If
$X\cap Y=\varnothing$, choose vectors
$\ell_x,\ell_y\in\R^4$, supported on $X,Y$, such that
$$
\ell_x^\top x=\ell_y^\top y=1,
$$
and put
$$
P=x\ell_x^\top+y\ell_y^\top.
$$
The relations above, which are unchanged for $A_0$ on $Z$, give
$$
(A_0)_Zx
=
\lambda_xx+\frac ba(\lambda_z-\lambda_y)y,
\qquad
(A_0)_Zy
=
\frac ab(\lambda_z-\lambda_x)x+\lambda_yy.
$$
Thus $W$ is invariant. Its two eigenvalues are $\lambda_z$ and
$\lambda_x+\lambda_y-\lambda_z$, which are distinct.

If $Y\subsetneq X$, choose a complement $L$ of $\R y$ in $\R^Y$.
Since $W\cap\R^Y=\R y$, extend $L$ to a complement $N_X$ of $W$ in
$\R^X$, and let $P$ be the projection onto $W$ along
$$
N_X\oplus\R^{X^{\mathrm c}}.
$$

For every selected support $J$, put $W_J=W\cap\R^J$. The active spaces
are
$$
W_X=\R x,\quad W_Y=\R y,\quad W_Z=W
$$
in the disjoint case, and
$$
W_Y=\R y,\quad W_X=W_Z=W
$$
in the nested case. Moreover,
$$
\R^J=W_J\oplus(\ker P\cap\R^J).
$$
The space $W_J$ is invariant under $(A_0)_J$, and every selected
eigenvalue is simple in the restriction to $W_J$. Relative to this
decomposition,
$$
(A_0)_J=
\begin{pmatrix}
R_J&*\\
0&Q_J
\end{pmatrix},
\qquad
P_J=
\begin{pmatrix}
\Id_{W_J}&0\\
0&0
\end{pmatrix}.
$$
A small scalar $s$, outside a finite set, separates every selected
eigenvalue of $R_J+s\Id_{W_J}$ from the spectrum of $Q_J$. Therefore
all selected principal eigenvalues of
$$
B:=A_0+sP
$$
are simple. Since $P$ fixes $x,y,z$, the three produced values are
$$
\beta_x=\lambda_x+s,
\qquad
\beta_y=\lambda_y+s,
\qquad
\beta_z=\lambda_z+s.
$$
They remain distinct and keep their strict exterior slacks. All
perturbations can be chosen in the prescribed neighborhood of $A$.
\end{proof}

\section*{Acknowledgements}
The author gratefully acknowledges support from the Math AmSud project N°51756TF (VIPS), ECOS Project C24E06 and the FMJH Gaspard Monge Program for Optimization and Data Science.

\end{document}